\documentclass{amsart}

\usepackage[utf8]{inputenc}
\usepackage[T1]{fontenc}
\usepackage[a4paper,margin=1in]{geometry}
\usepackage{amsmath,amssymb,amsfonts,amsthm,mathtools}
\usepackage{mathrsfs}
\usepackage{bm}
\usepackage{enumitem}
\usepackage{hyperref}
\usepackage{cite}
\usepackage{comment}
\usepackage{tikz}
\numberwithin{equation}{section}
\usetikzlibrary{arrows.meta,calc}

\hypersetup{
  colorlinks=true,
  linkcolor=blue,
  citecolor=blue,
  urlcolor=blue
}

\newtheorem{theorem}{Theorem}[section]
\newtheorem{proposition}[theorem]{Proposition}
\newtheorem{lemma}[theorem]{Lemma}
\newtheorem{corollary}[theorem]{Corollary}

\newtheorem{remark}[theorem]{Remark}

\newcommand{\G}{\mathcal G}
\newcommand{\K}{\mathcal K}
\newcommand{\R}{\mathbb R}
\newcommand{\C}{\mathbb C}
\newcommand{\D}{\mathscr D}
\newcommand{\N}{\mathbb N}
\newcommand{\I}{\mathcal I}
\newcommand{\ML}{\mathcal L}

\newcommand{\dom}{\operatorname{dom}}
\newcommand{\spec}{\sigma}

\newcommand{\dd}{\,\mathrm d}

\newcommand{\Span}{\operatorname{span}}

\title{ Multiplicity and Nonrelativistic limit of
Bound States of Nonlinear Dirac Equations on Noncompact Metric Graphs with Localized Nonlinearities}
\author{Pan Chen$^{1}$, Qi Guo$^{2}$  }
\date{}

\begin{document}
\maketitle

\begin{center}

{\small $^1$ School of Mathematical Sciences, Shanghai Jiao Tong University, Shanghai, 200240, P. R. China       }

{\small $^{2}$ School of Mathematics, Renmin University of China, Beijing,  100872, P. R. China  }

\end{center}
\noindent{ \bf Abstract.}
In this paper, we investigate the multiplicity of normalized
solutions to a nonlinear Dirac equation with localized 
nonlinearities on noncompact metric graphs under the
 \(L^2\)-constraint, as well as the asymptotic 
 behavior of these solutions in the nonrelativistic
  limit. First, we establish the existence of multiple 
  normalized bound states. Moreover, we explore the
   nonrelativistic limit and show that, 
   as the speed of light tends to infinity,
    the solutions converge to those of a nonlinear 
    Schrödinger equation. Our results 
    including the mass-subcritical, mass-critical and, in particular, mass-supercritical regimes.

\noindent\textbf{Keywords.} Nonlinear Dirac equations; 
Metric graphs; Minimax methods; Nonrelativistic limit.

\noindent\textbf{MSC 2020.} 35R02; 35Q41; 35Q55; 49J35; 58E05.

\section{Introduction}
{

In this paper we investigate the existence and multiplicity of bound states
of prescribed mass for nonlinear Dirac equations on noncompact metric graphs
with localized nonlinearities. More precisely, we consider
\begin{equation}\label{dirac}
\begin{cases}
\mathscr D_c u-\omega u=a\chi_{\mathcal K}|u|^{p-2}u,
\\[1mm]
\displaystyle \int_{\mathcal G}|u|^2\,dx=1,
\end{cases}
\end{equation}
where \(p\in(2,6)\), \(u:\mathcal G\to\mathbb C^2\), \(a>0\), and
\(\omega\in\mathbb R\) is an unknown Lagrange multiplier. Here
\(\mathcal G\) is a connected noncompact metric graph with finitely many
edges and vertices, \(\mathcal K\) denotes its compact core, and
\(\chi_{\mathcal K}\) is the characteristic function of \(\mathcal K\).
The parameter \(c>0\) is the speed of light and \(m>0\) is the mass of the
particle.

Metric graphs provide one-dimensional models for wave propagation on
branched structures and quantum networks. A metric graph
\(\mathcal G=(V,E)\) is obtained by identifying each bounded edge with a
compact interval \(I_e=[0,\ell_e]\) and each unbounded edge with a half-line
\(I_e=[0,+\infty)\), and then gluing endpoints according to the underlying
combinatorial graph. The metric is the shortest-path distance along the
edges. The compact core \(\mathcal K\) is the metric subgraph consisting of
all bounded edges.

Nonlinear Schrödinger equations on
metric graphs have been extensively studied, both for nonlinearities acting
on the whole graph and for nonlinearities localized on \(\mathcal K\).
For the localized prescribed-mass NLS problem one considers the functional
\begin{equation*}\label{eq:intro-nls-energy}
\mathcal{I}_\infty(f)
=
\frac{1}{2m}\int_{\mathcal G}|f'|^2\,dx
-\frac{2a}{p}\int_{\mathcal K}|f|^p\,dx,
\end{equation*}
constrained on
\[
S_\infty
:=
\left\{
f\in H^1_\mathrm{K}(\mathcal G):
\ \|f\|_{L^2(\mathcal G)}^2=1
\right\},
\]
where \(H^1_{\mathrm{K}}(\mathcal G)\) is the closed subspace of \(H^1(\mathcal G)\) consisting of all functions that are continuous at the vertices.
Critical points of \(\mathcal{I}_\infty\) restricted on ${S_\infty}$ correspond to weak solutions of 
\begin{equation*}\label{eq:intro-nls-el}
-\frac{1}{2m}f''+\nu f
=
a\chi_{\mathcal K}|f|^{p-2}f,
\end{equation*}
where \(\nu\in\mathbb R\) is the Lagrange
multiplier.
For the
existence, nonexistence, and multiplicity results for normalized nonlinear Schrödinger equations
on graphs, including critical and supercritical regimes, can be found in
\cite{AdamiSerraTilliCalcVar2015,AdamiSerraTilliJFA2016,AdamiSerraTilliCMP2017,AdamiSerraTilliCalcVar2019,BorthwickChangJeanjeanSoaveNonlinearity2023, DovettaJDE2018, DovettaSerraTilliAdvMath2020,SerraTentarelliJDE2016,TentarelliJMAA2016}.

A relativistic counterpart of the Schrödinger theory is obtained by
replacing the Laplacian with the Dirac operator. On a metric graph, the
one-dimensional free Dirac operator with mass \(m>0\) and speed of light
\(c>0\) acts edgewise as
\begin{equation*}\label{eq:intro-dirac-edge}
(\mathscr D_c u)_e
=
-ic\sigma_1u_e'
+mc^2\sigma_3u_e,
\qquad
\sigma_1=
\begin{pmatrix}
0&1\\
1&0
\end{pmatrix},
\quad
\sigma_3=
\begin{pmatrix}
1&0\\
0&-1
\end{pmatrix}.
\end{equation*}
The study of Dirac operators and nonlinear Dirac equations on networks is
motivated by effective models in condensed matter physics, nonlinear optics
and field theory. In the graph setting, suitable vertex conditions are needed
in order to obtain a self-adjoint realization of \(\mathscr D_c\). The
Kirchhoff-type conditions used in this paper impose continuity on the first
spinorial component and a balance condition on the second one. 
For the fixed frequency localized nonlinear Dirac equation
\[
\mathscr D_cu-\omega u
=
\chi_{\mathcal K}|u|^{p-2}u,
\qquad
\omega\in(-mc^2,mc^2),
\]
Borrelli, Carlone and Tentarelli proved self-adjointness and spectral
properties of the graph Dirac operator, established the existence and
multiplicity of bound states, and studied their convergence to NLS bound
states as \(c\to+\infty\), see \cite{BorrelliCarloneTentarelliSIAM2019}. Further results on nonlinear Dirac equations on
graphs, including the Cauchy problem, were obtained in
\cite{BorrelliCarloneTentarelliJDE2021}.
The normalized problem on noncompact metric graphs was recently initiated in  \cite{HeJi2025}, where the authors proved existence results in the mass-subcritical case
and gave sufficient assumptions in the supercritical regimes.
For the Dirac equation with prescribed mass, the
mass critical exponent is no longer 6, which is the mass-critical exponent for one-dimensional Schrödinger equation, but the Dirac exponent \(4\). 
This threshold is dictated by the \(L^2\)-preserving scaling on $\mathbb R$. Indeed, under the scaling 
\[
u_\lambda(x)=\lambda^{1/2}u(\lambda x),
\qquad
\|u_\lambda\|_{L^2}=\|u\|_{L^2},
\]
the leading Dirac kinetic term scales as \(\lambda\), whereas
\[
\int_\R |u_\lambda|^p\,dx
=
\lambda^{\frac p2-1}\int_\R |u|^p\,dx.
\]
Thus the kinetic and nonlinear terms have the same scaling precisely when
\(\frac p2-1=1\), namely when \(p=4\). Consequently \(2<p<4\) is the
mass-subcritical range, \(p=4\) is the mass-critical threshold, and
\(4<p<6\) is mass-supercritical for the Dirac equation while still lying
in the Schrödinger subcritical range relevant for the nonrelativistic limit.

In Euclidean spaces, normalized solutions of nonlinear Dirac equations have
been studied by  variational method; see, for instance,
\cite{ChenGuoYuJGA2026,ChenDingGuoWangCalcVar2024,ChenDingGuo2025,CotiZelatiNolasco2019,CotiZelatiNolasco2025,Nolasco2021}. Many of these arguments rely essentially
on Fourier analysis. The positive and negative spectral projectors of the
free Dirac operator are Fourier multipliers; this allows one to prove their
\(L^r\)-boundedness and to establish refined Gagliardo--Nirenberg type
inequalities by frequency decompositions. On a general metric graph, there
is no global Fourier transform compatible with the topology and the vertex
conditions. Hence one cannot directly transfer the Euclidean \(L^r\)-bounds
for spectral projectors, nor the Fourier proof of the modified
Gagliardo--Nirenberg inequalities needed in the supercritical case.
One of the main analytic points of this paper is to replace these Fourier
arguments by tools intrinsic to the graph: spectral calculus for the graph
Laplacian and for \(|\mathscr D_c|\).

There is also a variational difficulty which has no analogue in the
Schrödinger equation. The functional $\mathcal{I}_\infty(f)$  is bounded
from below on a suitable set, whereas the energy functional for Dirac equation is
strongly indefinite, because the spectrum of \(\mathscr D_c\) has both a
positive and a negative unbounded component. Therefore the constrained
functional is unbounded from above and from below even on the \(L^2\)-sphere.
In the subcritical range this can be handled through a global maximization
with respect to the negative spectral directions,
which reduces the functional to the positive spectral sphere. In the
supercritical range \(4<p<6\), such a reduction is only local: the nonlinear
term can no longer be controlled globally by the kinetic part. We
therefore combine a local reduction with a suitable truncation of the
functional. The minimax levels obtained in this truncated setting are then
shown to remain in the nonrelativistic region where the truncated and the
original functionals coincide.

In this paper, we always assume that \(\mathcal G\) is a connected, noncompact metric graph with finitely many
edges and vertices and a nonempty compact core \(\mathcal K\). Our main existence result is as follows.

\begin{theorem}\label{thm:1.1}
 Let
\(m, a>0\), $p\in (2,6)$.
For \(k\in\mathbb N\),
there exist \( a_k>0\) such that, for every \(a\ge a_k, c\ge c(a)\), problem
\eqref{dirac} possesses at least \(k\) distinct pairs of
normalized solutions
\[
(\pm u_{c,1},\omega_{c,1}),\ldots,(\pm u_{c,k},\omega_{c,k})\in H^{1/2}(\G, \mathbb{C}^2)\times \mathbb{R}.
\]
\end{theorem}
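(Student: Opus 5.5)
We will find the $k$ pairs as critical points of the energy functional
\[
\mathcal I_c(u)=\frac12\int_\G\langle \D_c u,u\rangle\,dx-\frac{a}{p}\int_{\mathcal K}|u|^p\,dx
\]
restricted to $S=\{u\in H^{1/2}(\G,\C^2):\|u\|_{L^2}^2=1\}$. The Lagrange multiplier gives $\omega$, and the evenness of $\mathcal I_c$ gives the pairs $\pm u$. The levels come from a genus-type minimax scheme built on the positive spectral subspace. Write $H^{1/2}=E^+\oplus E^-$ through the spectral projectors $P^\pm$ of $\D_c$; these are defined by functional calculus of $\D_c$ and $|\D_c|$ on the graph. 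We work in the rescaled nonrelativistic variables, in which the kinetic energy $\langle(\D_c-mc^2)u,u\rangle$ on $E^+$ is comparable to $\frac1{2m}\|u'\|^2$ uniformly in large $c$.

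\textbf{Step 1: reduction of the negative directions.} For $w\in E^+\setminus\{0\}$ we maximize $\mathcal I_c$ over the set $\{u\in S: P^+u\in\R_+ w\}$, that is, over the negative part $v=P^-u$. In the subcritical range $2<p<4$ this maximization is global and the functional is strictly concave in $v$. In the range $4\le p<6$ we first truncate the nonlinearity outside a ball in which the kinetic part of $w$ is $O(1)$, with a cutoff $\eta(\|w\|_{E^+}/R)$. On that ball the $v$-maximizer $h_c(w)$ is unique, it is $C^1$, and it satisfies $\|h_c(w)\|\lesssim c^{-1}$. The concavity in $v$ follows from $\D_c\le -mc^2$ on $E^-$ together with the Gagliardo--Nirenberg inequality on $\mathcal K$. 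The reduced functional $J_c(w)=\mathcal I_c(w+h_c(w))$, suitably normalized, is an even $C^1$ functional on the sphere of $E^+$. Its constrained critical points give critical points of $\mathcal I_c$ on $S$.

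\textbf{Step 2: $k$ negative levels for $a$ large.} We take $k$ functions $f_1,\dots,f_k\in H^1_{\mathrm K}(\G)$ with disjoint supports inside the interior of one edge of $\mathcal K$ and $L^2$-norm one. Let $V_k$ be their span. On the unit sphere of $V_k$ the quantities $\int|f'|^2$ are bounded and $\int_{\mathcal K}|f|^p\ge\delta_k>0$, so
\[
\sup_{V_k\cap S_\infty}\mathcal I_\infty\le C_k-\frac{2a}{p}\delta_k<-\mu<0 \qquad\text{for } a\ge a_k.
\]
We lift $V_k$ to $E^+$ through $f\mapsto P^+(f,0)$ and normalize. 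For $c\ge c(a)$ the corrections are $o(1)$, which gives a set $A_k$ of genus at least $k$ with $\sup_{A_k}(J_c-mc^2)<-\mu/2$. Meanwhile $\mathcal I_\infty$ is bounded below on $S_\infty$ in the subcritical case. In the critical and supercritical cases we use the lower bound on the truncated region together with the essential-spectrum threshold. In both settings the minimax levels
\[
\beta_j=\inf_{\gamma(A)\ge j}\sup_A (J_c-mc^2),\qquad j=1,\dots,k,
\]
are finite and lie in $[-C,-\mu/2]$.

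\textbf{Step 3: Palais--Smale condition below $0$ and removal of truncation.} The threshold $mc^2$ is the bottom of the essential spectrum, so levels strictly below it in the rescaled energy are compact. Suppose a Palais--Smale sequence loses mass along the half-lines. Since the nonlinearity lives only on $\mathcal K$, the escaping part carries energy $\ge 0$. Hence a level $\beta<0$ forces strong convergence. Here we use the localization of the nonlinearity and the compactness of $H^1(\mathcal K)\hookrightarrow L^p(\mathcal K)$. The Lagrange multiplier satisfies $\omega<mc^2$, and this rules out vanishing. With the deformation lemma, the $\beta_j$ then yield $k$ distinct pairs of critical points. If some levels coincide, the genus gives infinitely many, which is still at least $k$. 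Finally we check that the critical points found have $\|w\|_{E^+}$ bounded by a constant independent of $c$. This follows from the energy bound and the Pohozaev/Nehari-type identity. So for $R$ large they lie where the truncation is inactive, and they solve \eqref{dirac}.

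\textbf{Main obstacle.} The hardest step is Step 1 in the supercritical case $4<p<6$. There we need uniform-in-$c$ estimates for $h_c$ and an $L^p(\mathcal K)$ control of $P^\pm$ without Fourier analysis. We would first try to bound $\|P^\pm u\|_{L^p}$ through $\|\,|\D_c|^{1/2}u\|_{L^2}$ and the graph Gagliardo--Nirenberg inequality via spectral calculus. Combined with this, we would verify that the minimax levels stay in the region where the truncation is inactive.
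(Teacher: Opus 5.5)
There is a genuine gap in the supercritical part. It comes from your standing claim that on $E^+$ the kinetic excess $\langle(\D_c-mc^2)u,u\rangle$ is comparable to $\frac1{2m}\|u'\|_{L^2}^2$ uniformly in $c$. That claim is false. By spectral calculus for $\D_0^2$ this excess equals $\int_{[0,\infty)}\frac{\lambda}{\sqrt{m^2+\lambda/c^2}+m}\,d\mu_u(\lambda)$. This is bounded above by $\frac1{2m}\|u'\|_{L^2}^2$, but once $\lambda\gg c^2$ it grows only like $c\sqrt\lambda$. So there is no reverse bound, and elements of $Y_c$ need not even lie in $H^1$.

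Your truncation at an $O(1)$ kinetic radius, and its removal, need exactly this reverse control. Write $T_c(w)=\|w\|_c^2-mc^2$ for $w\in S_c^+$. In the transition region $R/2\le T_c(w)\le R$, the available lower bound (testing the fibre with $w$ itself) is $J_{c,T}(w)\ge mc^2+T_c(w)-\mathcal A(w)$. To keep the sublevel sets $\{J_{c,T}<mc^2-\mu/2\}$, and hence your minimax critical points, out of that region, you need $\mathcal A(w)\le C(1+T_c(w)^{\theta})$ with $\theta<1$ and $C$ independent of $c$.

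The tool in your last paragraph does not give this. Bounding $L^p$ norms through $\||\D_c|^{1/2}u\|_{L^2}$ and the graph Gagliardo--Nirenberg inequality is Corollary~\ref{lem:GNDirac}. At $\|w\|_c^2\approx mc^2$ it only yields $\mathcal A(w)\lesssim c^{-\frac{p-2}{2}}\|w\|_c^{p-2}\sim c^{\frac{p-2}{2}}$, which blows up. So spurious critical points of the truncated functional in the transition region cannot be excluded. Nehari or Pohozaev identities do not close this, because at a critical point of the truncated functional they involve the derivative of the cutoff.

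There are two ways to repair this.

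The first is to prove the uniform inequality. This is the paper's modified Gagliardo--Nirenberg estimate, Lemma~\ref{lem:thresholdMGN}. Split the spectral measure of $\Lambda_c=|\D_c|-mc^2$ at $c^2$.
On the low part $(0,c^2]$ one has $\|v_\ell'\|_{L^2}^2\le(2m+1)T_c(v_\ell)$. The $H^1$ inequality then gives a term $T_c^{\theta}$ with $\theta=\frac{p-2}{4}<1$.
On the high part one has $\|v_h\|_{L^2}^2\le c^{-2}T_c(v_h)$ and $\|v_h\|_{H^{1/2}}^2\le Cc^{-1}T_c(v_h)$. This gives a term $c^{-\frac{p-2}{2}}T_c^{2\theta}$, which is harmless for bounded $T_c$.
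For general $u$ one must also project off the finite-dimensional threshold kernel $\ker(\D_c+mc^2)$.

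The second is to truncate at the scale the paper uses, $\|w\|_c\sim c^s$ with $1<s<\frac{p-2}{2(p-4)}$. Below that scale, Corollary~\ref{lem:GNDirac} alone gives $\mathcal A(u)\le Cc^{-\eta}\|u\|_c^2$ for some $\eta>0$. With a cutoff multiplying the nonlinearity this yields $J_{c,T}(w)\ge(1-Cc^{-\eta})\|w\|_c^2$. Hence $J_{c,T}(w)<mc^2$ forces $\|w\|_c^2<2mc^2\ll c^{2s}$, deep inside the region where the cutoff is inactive.

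The rest of your plan follows the paper: fibre maximization over $Y_c^-\oplus\Span\{v\}$, lifting through $P_c^+(f,0)$, Palais--Smale below $mc^2$ via $\omega<mc^2$, and genus minimax. Your explicit $k$-dimensional space of functions supported inside one edge of $\K$ is a correct and more elementary substitute for the Serra--Tentarelli input, Lemma~\ref{lem:NLSnegative}. Your claimed uniform lower bound $\beta_j\ge-C$ rests on the same false comparability, but it is not needed: boundedness below for each fixed $c$ suffices.
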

\begin{remark}
The case \(a<0\) can be treated in the same way. Indeed, setting
\(b:=-a>0\) and multiplying the equation by \(-1\), we obtain
\[
        -\mathscr{D}_c u+\omega u=b\chi_K |u|^{p-2}u,
        \qquad \|u\|_{L^2(G)}=1 .
\]
Thus the problem is reduced to the same variational framework for the
negative Dirac operator \(-\mathscr{D}_c\). Its positive spectral space is precisely
the negative spectral space of \(\mathscr{D}_c\), namely
\[
        \mathbf 1_{[mc^2,+\infty)}(-\mathscr{D}_c)Y_c=Y_c^- .
\]
Therefore, applying the preceding reduction and the Krasnosel'skii genus
argument to \(-\mathscr{D}_c\) yields the same multiplicity result, with \(|a|\) in
place of \(a\).
\end{remark}

For convenience, the first and second components of \(u_{c,k}\)
 are denoted by \(f_{c,k}\) and \(g_{c,k}\), respectively.
\begin{theorem}\label{them:1.2}
  Under the assumptions of Theorem \ref{thm:1.1}, for each $k\in \mathbb{N}$,
there exist $f_{\infty, k}\in H^1(\G)$ and  constant $\nu_k>0$,  
satisfying
\begin{align*}
  \begin{cases}
&- \frac{1}{2m} f'' +\nu_k f=a\chi_\K|f|^{p-2}f  , \\
&\|f\|_{L^2}^2 =1,
  \end{cases}
\end{align*}
and, up to a subsequence, 
$$\|f_{c, k}-f_{\infty,k}\|_{H^1}\to 0,  \quad  \|g_{c,k}\|_{H^{1}}=\mathcal{O}(\frac{1}{c}) \ \  \text{as}\ \ c\rightarrow \infty.$$
 \end{theorem}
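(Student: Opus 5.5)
We prove Theorem \ref{them:1.2} by working directly with the Dirac system componentwise, using the uniform (in \(c\)) bounds that come out of the minimax construction behind Theorem \ref{thm:1.1}. Writing \(u_{c,k}=(f,g)\) and \(\lambda_c:=mc^2-\omega_{c,k}\), the equation reads, edgewise and with the Kirchhoff-type vertex conditions,
\[
-icg'+(mc^2-\omega)f=a\chi_\K|u|^{p-2}f,\qquad -icf'-(mc^2+\omega)g=a\chi_\K|u|^{p-2}g .
\]

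\textbf{Step 1: uniform energy and multiplier bounds.} The first step is to show that the critical levels \(E_{c,k}\) of the reduced functional satisfy
\[
E_{c,k}-mc^2\to \beta_k,
\]
where \(\beta_k\) is a genus minimax level of \(\mathcal I_\infty\) on \(S_\infty\) (in the supercritical case, of its truncated or localized version). For the upper bound we embed \(k\)-dimensional sets of \(S_\infty\) into the positive spectral sphere, as \(f\mapsto(f,\frac{-i}{2mc}f')\) normalized. For the lower bound we use the nonrelativistic control that defines the region where the truncation is inactive (the content of Theorem \ref{thm:1.1} for \(c\ge c(a)\)). Testing the equation against \(u\) then gives \(\lambda_c=\nu_k+o(1)\) with \(\nu_k\) bounded. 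Positivity \(\nu_k>0\) should follow from \(\beta_k<0\) when \(a\ge a_k\), combined with the identity obtained by testing with \(f\) (Pohozaev/Nehari-type). This is where largeness of \(a\) enters.

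\textbf{Step 2: estimates on \(g\) and \(f\).} From the second equation,
\[
g=\frac{-icf'-a\chi_\K|u|^{p-2}g}{mc^2+\omega},\qquad mc^2+\omega=2mc^2-\lambda_c\sim 2mc^2 .
\]
Combining this with the uniform \(H^{1/2}\) bound and the one-dimensional embedding \(H^{1/2}\hookrightarrow L^q\), we get \(\|g\|_{L^2}=\mathcal O(1/c)\cdot\|f'\|_{L^2}+o(\|g\|)\). Substituting into the first equation gives
\[
-\frac{c^2}{mc^2+\omega}f''+\lambda_c f=a\chi_\K|u|^{p-2}f+\text{(error)},
\]
that is, an NLS with coefficient tending to \(\frac{1}{2m}\). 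Testing with \(f\) yields a uniform \(H^1\) bound on \(f\), which we bootstrap through \(L^\infty\) bounds. This then gives \(\|g\|_{H^1}=\mathcal O(1/c)\) by differentiating the formula for \(g\) and using the first equation for \(g'\).

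\textbf{Step 3: compactness and passage to the limit.} Extract \(f_c\rightharpoonup f_{\infty,k}\) in \(H^1\) along a subsequence. On the compact core \(\K\) the convergence is strong, since the embedding \(H^1(\K)\hookrightarrow L^p\) is compact, so the limit solves the NLS with multiplier \(\nu_k\). Mass is not lost on the half-lines: there the equation is linear with \(\lambda_c\to\nu_k>0\), which gives uniform exponential decay. Hence \(\|f_{\infty,k}\|_{L^2}=1\). Strong \(H^1\) convergence follows by subtracting the two equations and testing with \(f_c-f_{\infty,k}\), using \(\nu_k>0\) for coercivity.

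\textbf{Main obstacle.} We expect the main obstacle to be Step 1 in the supercritical range \(4<p<6\). There one must ensure uniformly in \(c\) that the solutions stay in the truncation region, and that \(\nu_k>0\), i.e. that the multiplier does not drift toward the threshold \(mc^2\). We would first try to compare \(E_{c,k}-mc^2\) with the localized NLS minimax levels via the explicit two-sided embeddings described in Step 1.
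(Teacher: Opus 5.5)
Once Step 1 is trimmed (see below), your argument works, but it reaches the conclusion by a different route than the paper.

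\textbf{How the paper proceeds.} It first proves a uniform $H^1$ bound on the whole spinor from the identity $\|\mathscr D_cu\|_{L^2}^2=m^2c^4\|u\|_{L^2}^2+c^2\|u'\|_{L^2}^2$ and $0<\omega_{c,k}<mc^2$. For $4<p<6$ it adds the $H^{1/2}$ bound of Lemma~\ref{lem:truncLocalizationGraph}. It then bounds $mc^2-\omega_{c,k}$ using the energy upper bound and a test with $u^+$, and derives $\|g_{c,k}\|_{H^1}=\mathcal O(1/c)$. Finally, the energy expansion \eqref{eq:energyExpansionNR} shows that the normalized $f_{c,k}$ form a Palais--Smale sequence of $\mathcal I_\infty|_{S_\infty}$ at a negative level, and Lemma~\ref{lem:NLSPS} gives strong convergence.

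\textbf{How you proceed.} You get the multiplier bounds first. You obtain the $H^1$ bound on $f$ by testing the reduced second-order equation with $f$. You then do compactness by hand: weak limit, strong convergence on $\K$, and coercivity of $\frac1{2m}\|\varphi'\|_{L^2}^2+\nu_k\|\varphi\|_{L^2}^2$ for $\varphi=f_c-f_{\infty,k}$. This is more self-contained. It uses only $\liminf_c\lambda_c>0$ and needs neither the energy expansion nor the external Palais--Smale lemma. The coercivity step alone already gives strong $H^1$ convergence and $\|f_{\infty,k}\|_{L^2}=1$. So your exponential-decay argument on the half-lines is correct (there $f$ solves $-\frac{c^2}{mc^2+\omega}f''+\lambda_cf=0$ exactly) but redundant. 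The paper's route gives in addition that $f_{\infty,k}$ lies at a negative level of $\mathcal I_\infty$.

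\textbf{What is wrong with Step 1.} It asserts more than you prove or need. The convergence $E_{c,k}-mc^2\to\beta_k$ requires bounding the Dirac minimax levels from below by NLS ones. That means an odd map from genus sets in $S_c^+$, which contain only $H^{1/2}$ spinors, into $S_\infty$ with energy control. The localization of Lemma~\ref{lem:truncLocalizationGraph} does not supply this. Also, no truncated NLS functional enters: for every $p\in(2,6)$ the relevant levels are the $e_k^a$ of \eqref{eq:einfj}.

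\textbf{What suffices instead.} You only use two facts:
\begin{enumerate}
\item The upper bound $\limsup_c(E_{c,k}-mc^2)\le e_k^a<0$, which is Lemmas~\ref{lem:FWgraph} and~\ref{lem:FWgraph2}. The test spinor there is the normalized $P_c^+\iota(f)$; your $(f,\frac{-i}{2mc}f')$ is not in $Y_c^+$.
\item The elementary lower bound $E_{c,k}=\mathcal J_c(v)$ (resp.\ $\mathcal J_c^{\mathrm{loc}}(v)$) $\ge\mathcal I_c(v)=mc^2+T_c(v)-\mathcal A(v)\ge mc^2-C$, by the same Gagliardo--Nirenberg-type bounds that give $T_c(v)\le C$.
\end{enumerate}
With $\int_\K|u|^p$ bounded, the identity $\lambda_c=mc^2-E_{c,k}+\frac{a(p-2)}{p}\int_\K|u|^p$ then gives $0<-e_k^a\le\liminf_c\lambda_c\le\limsup_c\lambda_c<\infty$. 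So $\nu_k$ is just a subsequential limit.

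\textbf{The $H^{1/2}$ bound at $p=4$.} The uniform $H^{1/2}$ bound you use in Step 2 is immediate for $p<4$ (Lemma~\ref{lemma2.1} with \eqref{eq:new}) and for $4<p<6$ (Lemma~\ref{lem:truncLocalizationGraph}). At $p=4$ with $a$ large, the estimate $\mathcal A(v)\le Ca\|v\|_{H^{1/2}}^2$ does not close. There you need either the threshold splitting of Lemma~\ref{lem:thresholdMGN}, whose proof works with $\theta=\frac12$, or the paper's $H^1$ identity.
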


 The
main point of the proof of Theorem \ref{thm:1.1} and Theorem \ref{them:1.2},
is to combine the genus construction for the limiting 
Schr\"odinger equation with a reduction procedure adapted to the 
energy functional of \eqref{dirac}. Owing to the spectral decomposition
\(
Y_c=Y_c^+\oplus Y_c^-
\)(see Section \ref{sec:prelim} for details),
the quadratic part of the functional has opposite signs on the two
components, and therefore the constrained functional cannot be treated by a
direct minimization on the \(L^2\)-sphere. In the range \(2<p\le4\), the
fractional Gagliardo--Nirenberg inequality on the graph gives a global
control of the localized nonlinear term. This allows us to maximize the
functional along the negative spectral directions,
and hence to reduce the problem to the $L^2$-sphere of \(Y_c^+\). The
minimax levels of the reduced functional are then estimated by projecting
compact minimax sets of the limiting Schr\"odinger equation into
\(Y_c^+\). In this way,
for every $k\in \mathbb{N}$, some
negative sublevels of $\mathcal I_\infty$ have genus at least $k$
 give rise,
for \(c\) large, to critical levels of Dirac functional lying below the threshold \(mc^2\),
where the required compactness can be recovered. The case \(4<p<6\) requires
a different implementation of the same idea. Since the reduced functional is
not globally controlled in this regime, the reduction is first performed only
in a nonrelativistic neighborhood of the $L^2$-sphere of \(Y_c^+\). We then introduce a suitable
cutoff and work with a reduced functional defined on the whole
positive sphere. A key ingredient is the  modified
Gagliardo--Nirenberg inequality for
\[
\Lambda_c:=|\mathscr D_c|-mc^2.
\]
 After applying the minimax argument to the reduced functional, we
derive uniform bounds showing that the obtained critical points actually lie
inside the region where the cutoff is inactive. Consequently, they are
critical points of the original functional and solve
\eqref{dirac}.

This paper is organized as follows. Section~2 contains the
preliminary results needed in the variational setting, 
such as genus theory, minimax theorem and
Gagliardo-Nirenberg inequality. Section~3 is
devoted to the mass-subcritical or critical case \(2<p\le4\). There we construct the
global maximization map, study the corresponding reduced functional on
\(S_c^+\),  obtain the first part of the multiplicity result. Section~4 treats the
mass-supercritical range \(4<p<6\). We prove the modified
Gagliardo--Nirenberg inequality near the spectral threshold, introduce the
local reduction and the truncated reduced functional, and carry out the
corresponding minimax construction.  Finally, Section~5 studies the behavior of the
solutions as \(c\to+\infty\). 

}

\section{Preliminary results}\label{sec:prelim}

In this section, we present some preliminary notions
 on the Dirac operator and some basic results about genus theory, minimax theorem and Gagliardo-Nirenberg inequality,
   which will be used in the proofs of the main theorems.

Let \(\mathcal{G}=(V , E )\) be a connected noncompact metric graph with finitely many edges and vertices, and with nonempty compact core \(\mathcal{K}\).  The set of vertices belonging to \(\mathcal{K}\) is denoted by \(V _{\mathcal{K}}\), and the set of bounded edges by \( E _{\mathcal{K}}\).  Every bounded edge \(e\in E _{\mathcal{K}}\) is identified with an interval \(I_e=[0,\ell_e]\), while every unbounded edge is identified with a half-line \([0,+\infty)\).  We fix once and for all an orientation of each edge.  If \(e\) is incident with a vertex \(v\), we write \(e\succ v\).  The initial and terminal endpoints of an oriented bounded edge are denoted by \(e_-\) and \(e_+\), respectively.

For \(e\succ v\), set
\[
\varepsilon_{e,v}:=
\begin{cases}
+1, & v=e_-,\\
-1, & v=e_+.
\end{cases}
\]
Thus, for a scalar function \(h\),
\[
h_e(v)^{\pm}:=\varepsilon_{e,v}h_e(v),
\qquad
\partial_e h(v):=\varepsilon_{e,v}h'_e(v),
\]
where \(\partial_eh(v)\) is the outgoing derivative at \(v\) along \(e\).  If \(e\) is a loop, its two incidences at the same vertex are counted separately, with their corresponding signs.

 Throughout the paper, for $s>0$, 
 we use the edgewise Sobolev space
\[
H^s(\mathcal G,\mathbb C^2)
:=
\bigoplus_{e\in E}H^s(I_e,\mathbb C^2),
\]
endowed with its natural norm, the vertex conditions are not included in
this definition. We also define the following two subspaces of $H^1(\G)$ by
\[
H^1_{\mathrm{K}}(\mathcal{G})
:=
\big\{f\in H^1(\mathcal{G}):
 f_e(v)=f_h(v)\text{ for all }e,h\succ v,\ v\in V _{\mathcal{K}}
\big\},
\]
and
\[
H^1_{\mathrm{D}}(\mathcal{G})
:=
\left\{g\in H^1(\mathcal{G}):
\sum_{e\succ v} g_e(v)^{\pm}=0
\text{ for every }v\in V _{\mathcal{K}}
\right\}.
\]
Let
\[
T:H^1_{\mathrm K}(\mathcal G)\subset L^2(\mathcal G)
\longrightarrow L^2(\mathcal G),
\qquad
Tf=f',
\]
be the first-order derivative operator. Then
\[
\operatorname{dom}(T)=H^1_{\mathrm K}(\mathcal G),
\qquad
\operatorname{dom}(T^*)=H^1_{\mathrm D}(\mathcal G).
\]
We define 
\[
\mathcal L_{\mathcal G}^{\mathrm K}:=T^*T,
\qquad
\mathcal L_{\mathcal G}^{\mathrm D}:=TT^*,
\]
with domain
\[
\operatorname{dom}(L_{\mathcal{G}}^{\mathrm{K}})
=
\left\{
 f\in H^2(\mathcal{G}):\,f_e(v)=f_h(v) \,(e,h\succ v),\,
 \sum_{e\succ v}\partial_e f(v)=0,\,\,v\in V  _{\mathcal{K}}
\right\},
\]

\[
\operatorname{dom}(L_{\mathcal{G }}^{\mathrm{D}})
=
\left\{
 g\in H^2(\mathcal{G}):
g'_e(v)=g'_h(v)\, ( e,h\succ v),\,\sum_{e\succ v}g_e(v)^{\pm}=0,\,\, v\in V  _{\mathcal{K}}
\right\}.
\]
They act edgewise as \(-d^2/dx^2\). Their form domains are
\[
\operatorname{dom}\big((\mathcal L_{\mathcal G}^{\mathrm K})^{1/2}\big)
=
H^1_{\mathrm K}(\mathcal G),
\qquad
\operatorname{dom}\big((\mathcal L_{\mathcal G}^{\mathrm D})^{1/2}\big)
=
H^1_{\mathrm D}(\mathcal G),
\]
The self-adjointness, spectra and
kernels of these two operators are given in Appendix~\ref{app:graph-operators}.

The massless Dirac operator is written as
\[
\mathscr D_0
=
\begin{pmatrix}
0&iT^*\\
-iT&0
\end{pmatrix},
\qquad
\operatorname{dom}(\mathscr D_0)
=
H^1_{\mathrm K}(\mathcal G)
\oplus
H^1_{\mathrm D}(\mathcal G).
\]
Here and below, the direct sum is the external direct sum of the two spinorial
components. For \(c>0\) and \(m>0\), the massive Dirac operator is
\[
\mathscr D_c
=
c\mathscr D_0+mc^2\sigma_3,
\qquad
\operatorname{dom}(\mathscr D_c)=\operatorname{dom}(\mathscr D_0).
\]
Equivalently, \(\mathscr D_c\) acts edgewise as
\[
(\mathscr D_c u)_e
=
-ic\sigma_1u'_e+mc^2\sigma_3u_e.
\]
Moreover,
\[
\mathscr D_0^2
=
\mathcal L_{\mathcal G}^{\mathrm K}
\oplus
\mathcal L_{\mathcal G}^{\mathrm D},
\qquad
\operatorname{dom}(\mathscr D_0^2)
=
\operatorname{dom}(\mathcal L_{\mathcal G}^{\mathrm K})
\oplus
\operatorname{dom}(\mathcal L_{\mathcal G}^{\mathrm D}),
\]
and hence
\[
\mathscr D_c^2
=
c^2\mathscr D_0^2+m^2c^4I.
\]
In particular,
\[
|\mathscr D_c|
=
\big(c^2\mathscr D_0^2+m^2c^4I\big)^{1/2}
=
\big(c^2\mathcal L_{\mathcal G}^{\mathrm K}+m^2c^4\big)^{1/2}
\oplus
\big(c^2\mathcal L_{\mathcal G}^{\mathrm D}+m^2c^4\big)^{1/2}.
\]
with domain
\[
\operatorname{dom}(|\mathscr D_c|)
=
\operatorname{dom}(\mathscr D_c).
\]
The spectral of \(\mathscr D_c\) is 
\begin{equation}\label{spec}
     \sigma(\mathscr D_c)
=
(-\infty,-mc^2]\cup[mc^2,+\infty),   
\end{equation}
For more details, we refer the reader to \cite{BorrelliCarloneTentarelliSIAM2019,BorrelliCarloneTentarelliNote2019} and to Appendix~\ref{app:graph-operators} as well.
The form domain of $\mathscr{D}_c$ is defined as
\begin{equation}\label{eq:intro-form-domain}
Y_c
:=
\operatorname{dom}(|\mathscr D_c|^{1/2})
=
\big[L^2(\mathcal G,\mathbb C^2),
\operatorname{dom}(\mathscr D_c)\big]_{\frac12}.
\end{equation}
It is a closed subspace of 
\[H^{1/2}(\G,\C^2)=\big[L^2(\G,\C^2),H^{1}(\G,\C^2)\big]_{\frac12}.\] Hence, for
every \(2\le r<+\infty\),
\begin{equation}\label{eq:intro-embedding}
Y_c\hookrightarrow L^r(G,\mathbb C^2)\ \text{continuously},\qquad
Y_c\hookrightarrow L^r(K,\mathbb C^2)\ \text{compactly}.
\end{equation}
We equip \(Y_c\) with the inner product
\[
(u,v)_c
:=
\Re 
\big(|\mathscr D_c|^{1/2}u,|\mathscr D_c|^{1/2}v\big)_{L^2},
\qquad
\|u\|_c^2=(u,u)_c.
\]
The spectral gap \eqref{spec} gives
\[
\|u\|_c^2\ge mc^2\|u\|_{L^2}^2,
\qquad u\in Y_c.
\]
As a vector space, \(Y_c\) is independent of \(c\). More precisely,
\[
Y_c
=
\operatorname{dom}\big((I+\mathscr D_0^2)^{1/4}\big)
=
\operatorname{dom}\big((I+\mathcal L_{\mathcal G}^{\mathrm K})^{1/4}\big)
\oplus
\operatorname{dom}\big((I+\mathcal L_{\mathcal G}^{\mathrm D})^{1/4}\big).
\]
Since $Y_c=Y_1$ is a closed subspace of $H^{1/2}(\G,\mathbb{C}^2)$, then
\[
C^{-1}\|u\|_{H^{1/2}}\leq \|u\|_1\leq C \|u\|_{H^{1/2}}.
\]
Moreover, as stated in Appendix~\ref{app:graph-operators}, for each \(c>1\),
\begin{equation}\label{3}
         C^{-1}c\|u\|_{H^{1/2}}^2 \le \|u\|_c^2 \le Cc^2\|u\|_{H^{1/2}}^2.
\end{equation}
Let $P_c^\pm$ be the projectors onto positive or negative spectral subspaces for $\mathscr{D}_c$, that is
\[
P_c^+
=
\mathbf 1_{[mc^2,+\infty)}(\mathscr D_c),
\qquad
P_c^-
=
\mathbf 1_{(-\infty,-mc^2]}(\mathscr D_c).
\]
Define
\[
L_c^\pm:=P_c^\pm L^2(\mathcal G,\mathbb C^2),\qquad Y_c^\pm:=P_c^\pm Y_c.
\]
Then the direct sum decompositions
\[
L^2(\mathcal G,\mathbb C^2)=L_c^+\oplus L_c^-,\qquad Y_c=Y_c^+\oplus Y_c^-
\]
hold.
 Let
\begin{equation}\label{eq:spheres}
S_c:=\{u\in Y_c:\|u\|_{L^2}^2=1\},\qquad S_c^\pm:=S_c\cap Y_c^\pm.
\end{equation}
For \(u=u^++u^-\in Y_c^+\oplus Y_c^-\), define
\[
\mathcal A(u):=\frac{2a}{p}\int_{\mathcal K} |u|^p\,dx.
\]
One can easily see that a bound state of \eqref{dirac}  is a
critical point of
\begin{equation}\label{eq:Ic}
\mathcal I_c(u)
:=\|u^+\|_c^2-\|u^-\|_c^2
-\frac{2a}{p}\int_\K |u|^p\dd x.
\end{equation}
The derivative, as a real functional, is
\begin{equation}\label{eq:IcDerivative}
\dd\mathcal I_c(u)[\varphi]
=2(u^+,\varphi^+)_c-2(u^-,\varphi^-)_c
-2a\Re \int_\K |u|^{p-2}u\cdot\overline{\varphi}\dd x.
\end{equation}
A critical point of $\mathcal I_c|_{S_c}$ is a weak solution of \eqref{dirac}; more precisely, there exists $\omega\in\R$ such that
\begin{equation}\label{eq:weakDirac}
\dd\mathcal I_c(u)[\varphi]=2\omega\Re \int_\G u\cdot\overline\varphi\dd x,
\qquad \varphi\in Y_c.
\end{equation}

The following lemma can be easily proved for the Euclidean space case by means of the Fourier analysis,
 here we use spectral measures to prove the version on metric graphs.
\begin{lemma}\label{lemma2.1}
There exists a constant \(C>0\), independent of \(c>1\), such that
\[
 \|u\|_c^2-mc^2\|u\|_{L^2(\mathcal G)}^2
 +\|u\|_{L^2(\mathcal G)}^2
 \ge C\|u\|_1^2 ,
 \qquad u\in Y_c.
\]
\end{lemma}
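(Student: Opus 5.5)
The plan is to reduce the claimed inequality to a pointwise inequality between scalar functions, via the spectral theorem for the nonnegative self-adjoint operator \(\mathscr D_0^2=\mathcal L_{\mathcal G}^{\mathrm K}\oplus\mathcal L_{\mathcal G}^{\mathrm D}\). Both \(|\mathscr D_c|\) and \(|\mathscr D_1|\) are Borel functions of this single operator:
\[
|\mathscr D_c|=\phi_c(\mathscr D_0^2),\qquad \phi_c(\lambda):=\sqrt{c^2\lambda+m^2c^4},\qquad \lambda\ge0 .
\]
Let \(\mu_u\) be the spectral measure of \(\mathscr D_0^2\) associated with \(u\), a finite positive measure on \([0,\infty)\) with \(\mu_u([0,\infty))=\|u\|_{L^2}^2\). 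For \(u\in Y_c\), which as a vector space does not depend on \(c\), we get
\[
\|u\|_c^2=\int_0^\infty \phi_c(\lambda)\,d\mu_u(\lambda),\qquad \|u\|_1^2=\int_0^\infty\sqrt{\lambda+m^2}\,d\mu_u(\lambda).
\]
The lemma will therefore follow once we find a constant \(C>0\), independent of \(c>1\) and \(\lambda\ge0\), such that
\[
\sqrt{c^2\lambda+m^2c^4}-mc^2+1\ \ge\ C\sqrt{\lambda+m^2}\qquad\text{for all }\lambda\ge0,\ c>1 .
\]
Integrating this against \(\mu_u\) gives exactly the claim.

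To prove the scalar inequality, rationalize the first two terms:
\[
\sqrt{c^2\lambda+m^2c^4}-mc^2=\frac{c^2\lambda}{\sqrt{c^2\lambda+m^2c^4}+mc^2}=\frac{\lambda}{\sqrt{\lambda/c^2+m^2}+m}.
\]
The right-hand side is nondecreasing in \(c\). For \(c>1\) it is therefore bounded below by its value at \(c=1\), namely
\[
\frac{\lambda}{\sqrt{\lambda+m^2}+m}=\sqrt{\lambda+m^2}-m .
\]
Hence the left-hand side of the scalar inequality is at least \(\sqrt{\lambda+m^2}-m+1\). We then split into two cases.
\begin{itemize}
\item If \(m\le 1\), the bound \(\sqrt{\lambda+m^2}-m+1\ge\sqrt{\lambda+m^2}\) already holds, so \(C=1\) works.
\item If \(m>1\), consider the function \(t\mapsto (t-m+1)/t\) on \(t=\sqrt{\lambda+m^2}\ge m\). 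It is increasing in \(t\), so it is bounded below by its value at \(t=m\), which is \(1/m\). This gives \(C=\min\{1,1/m\}\).
\end{itemize}

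The main point requiring care is not the scalar estimate but the functional-analytic justification. We must make sure that \(\|u\|_c\) and \(\|u\|_1\) are computed through the same spectral measure for \(u\in Y_c=\operatorname{dom}((I+\mathscr D_0^2)^{1/4})\). Concretely, this means checking that \(\big\||\mathscr D_c|^{1/2}u\big\|_{L^2}^2=\int\phi_c\,d\mu_u\) by the functional calculus, and that the real part in the definition of \((\cdot,\cdot)_c\) is harmless on the diagonal. Both follow from the identities \(|\mathscr D_c|=(c^2\mathscr D_0^2+m^2c^4)^{1/2}\) and \(\operatorname{dom}(|\mathscr D_c|^{1/2})=\operatorname{dom}((I+\mathscr D_0^2)^{1/4})\) recorded above. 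The spectral measure here plays the role that the Fourier transform plays in the Euclidean proof, replacing \(|\xi|^2\) by \(\lambda\).
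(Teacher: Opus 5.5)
Your proof is correct and takes the same route as the paper: you express $\|u\|_c^2$, $\|u\|_1^2$ and $\|u\|_{L^2}^2$ through the spectral measure of $\mathscr D_0^2$, rationalize $\sqrt{c^2\lambda+m^2c^4}-mc^2=\lambda/(\sqrt{m^2+\lambda/c^2}+m)$, and integrate a pointwise lower bound. Your monotonicity-in-$c$ reduction to $\sqrt{\lambda+m^2}-m+1$, with the explicit constant $C=\min\{1,1/m\}$, supplies the ``direct computation'' that the paper asserts without detail.
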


\begin{proof}
Let
\[
 A:=\mathscr D_0^2 .
\]
Then 
\[
 |\mathscr D_c|=(c^2A+m^2c^4I)^{1/2}.
\]

Let \(E_A(\cdot)\) be the spectral resolution of \(A\). For \(u\in Y_c\), define 
\[
 d\mu_u(\lambda):=d\|E_A(\lambda)u\|_{L^2(\mathcal G)}^2 .
\]
By the spectral theorem,
\[
 \|u\|_c^2
 =
 \int_{[0,\infty)}
 (c^2\lambda+m^2c^4)^{1/2}\,d\mu_u(\lambda),
\]
and, since \(\|\cdot\|_1\) is the form norm associated with
\(|\mathscr D_1|=(A+m^2I)^{1/2}\),
\[
 \|u\|_1^2
 =
 \int_{[0,\infty)}
 (\lambda+m^2)^{1/2}\,d\mu_u(\lambda).
\]
Moreover,
\[
 \|u\|_{L^2(\mathcal G)}^2
 =
 \int_{[0,\infty)}d\mu_u(\lambda).
\]
Hence
\[
\begin{aligned}
&\|u\|_c^2-mc^2\|u\|_{L^2(\mathcal G)}^2
 +\|u\|_{L^2(\mathcal G)}^2  \\
&\qquad =
 \int_{[0,\infty)}
 \left[
 (c^2\lambda+m^2c^4)^{1/2}-mc^2+1
 \right]\,d\mu_u(\lambda).
\end{aligned}
\]

A direct computation yields,
for \(\lambda\ge0\) and \(c>1\), 
\[
 (c^2\lambda+m^2c^4)^{1/2}-mc^2+1
 =
 \frac{\lambda}{
 \left(m^2+\frac{\lambda}{c^2}\right)^{1/2}+m
 } +1 \geq C(\lambda+m^2)^{1/2},
\]
Integrating this inequality with respect to \(d\mu_u\), we obtain
\[
\begin{aligned}
&\|u\|_c^2-mc^2\|u\|_{L^2(\mathcal G)}^2
 +\|u\|_{L^2(\mathcal G)}^2  \\
&\qquad \ge
 C_m\int_{[0,\infty)}(\lambda+m^2)^{1/2}\,d\mu_u(\lambda)
 =
 C_m\|u\|_1^2 .
\end{aligned}
\]
The proof is complete.
\end{proof}

We shall use the following Gagliardo--Nirenberg inequality on
noncompact metric graphs. 

\begin{lemma}
\label{lem:graph_Hhalf_GN}
Let \(\G\) be a connected noncompact metric graph with finitely many edges and
vertices. For every \(q\in[2,+\infty)\) there exists \(C_q>0\), depending only
on \(q\) and on \(\G\), such that
\begin{enumerate}[label=\textup{(\roman*)}]
\item For every \(f\in H^1(\G)\),
\begin{equation}
\label{eq:GN_H1_graph}
\|f\|_{L^q(\G)}^q
 \le C_q
 \|f\|_{L^2(\G)}^{\frac q2+1}
 \|f'\|_{L^2(\G)}^{\frac q2-1}.
\end{equation}
\item For $u\in H^{1/2}(\G)$, 
\begin{equation}
\label{eq:new}
\|u\|_{L^q(\G)}^q
 \le C_q
 \|u\|_{L^2(\G)}^2
 \|u\|_{H^{1/2}(\G)}^{q-2},
\end{equation}
\item For $f\in \operatorname{dom}\big((L_{\mathcal{G}}^{\mathrm{K}})^{1/4}\big)$, there holds
\begin{equation}
\label{eq:Hhalf_GN_graph}
\|f\|_{L^q(\G)}^q
 \le C_q
 \|f\|_{L^2(\G)}^2
 \|(L_\G^{\mathrm{K}})^{1/4}f\|_{L^2(\G)}^{q-2}.
\end{equation}
\end{enumerate}
\end{lemma}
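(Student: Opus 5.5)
We prove the three estimates in the order (i), (iii), (ii), treating each as a one-dimensional interpolation inequality on the finitely many edges.

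\textbf{Part (i).} This is the standard graph Gagliardo--Nirenberg inequality. Since $\G$ has finitely many edges, we argue edgewise. For a half-line edge, $|f(x)|^2\le 2\|f\|_{L^2(I_e)}\|f'\|_{L^2(I_e)}$ follows from $f(x)^2=-\int_x^\infty (f^2)'$. For a bounded edge $[0,\ell_e]$ the pure product estimate fails for constants, since no continuity is assumed here. We therefore first prove the $L^\infty$ bound
\[
\|f\|_{L^\infty(\G)}^2\le C\|f\|_{L^2}\|f'\|_{L^2}
\]
in the form actually available: $\|f\|_\infty^2\le C(\|f\|_2\|f'\|_2+\|f\|_2^2)$, using a cutoff or extension argument on each interval. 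Then $\|f\|_q^q\le\|f\|_\infty^{q-2}\|f\|_2^2$ gives (i) up to a lower-order term $\|f\|_2^{q}$. This term is the main obstacle. For $H^1_{\mathrm K}$ functions on a noncompact connected graph with a half-line, one follows a path from any point to infinity and obtains $\|f\|_\infty^2\le 2\|f\|_2\|f'\|_2$ directly, since each point is connected to a half-line on which $f\to0$. I would therefore prove (i) under continuity at vertices, which is what is used later. For general edgewise $H^1$ functions, the inequality is false on a bounded edge (take $f$ constant on one compact edge and zero elsewhere, so $f'=0$), so the statement must be read with $\|f'\|$ replaced by the full $H^1$ norm. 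I would prove it in that form, $\|f\|_q^q\le C\|f\|_2^{q/2+1}\|f\|_{H^1}^{q/2-1}$.

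\textbf{Part (iii).} Let $A=L_\G^{\mathrm K}$, which is nonnegative with form domain $H^1_{\mathrm K}$. For $f$ in the form domain of $A^{1/4}$ we split spectrally at a level $\Lambda>0$:
\[
f=E_A([0,\Lambda])f+E_A((\Lambda,\infty))f=:f_{\le}+f_{>}.
\]
For the low part, (i) in its continuous-at-vertices version gives
\[
\|f_{\le}\|_\infty^2\le C\|f\|_2\|A^{1/2}f_{\le}\|_2\le C\Lambda^{1/4}\|f\|_2\,\|A^{1/4}f\|_2 .
\]
Since $L^\infty$ is not an endpoint for $H^{1/2}$, I would not aim at an $L^\infty$ bound. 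Instead I would run the layer-cake (Chemin-type) argument, $\|f\|_q^q=q\int_0^\infty t^{q-1}|\{|f|>t\}|\,dt$. For each $t$ we choose $\Lambda(t)$ so that $\|f_{\le}\|_\infty\le t/2$, namely $\Lambda^{1/4}\sim t^2/(\|f\|_2\|A^{1/4}f\|_2)$, and then $|\{|f|>t\}|\le 4t^{-2}\|f_{>}\|_2^2$. We bound $\|f_{>}\|_2^2=\int_{(\Lambda,\infty)}d\mu_f$, insert it, and exchange the integrals. Because $q>2$, this should yield
\[
\int\int_0^{t(\lambda)} t^{q-3}\,dt\,d\mu_f\sim\int \lambda^{(q-2)/8}\bigl(\|f\|_2\|A^{1/4}f\|_2\bigr)^{(q-2)/2}d\mu_f .
\]
The exponents then need adjusting. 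The robust alternative is the following. By (i), $\|g\|_{q}^q\le C\|g\|_2^{q/2+1}\|A^{1/2}g\|_2^{q/2-1}$, which is an $H^1$-to-$L^q$ bound, and trivially $\|g\|_2\le\|g\|_2$. Complex interpolation of the map $f\mapsto f$ between $\mathrm{dom}(A^0)=L^2$ and $\mathrm{dom}(A^{1/2})$ shows that $\mathrm{dom}(A^{1/4})\hookrightarrow L^{r}$ for all $r<\infty$ with the scaling-consistent bound. Homogeneity is handled by the scaling $A\mapsto sA$: apply the inhomogeneous bound $\|f\|_q^q\le C\|f\|_2^2(\|A^{1/4}f\|_2+\|f\|_2)^{q-2}$ and remove the $\|f\|_2$ term by the dilation structure of half-lines. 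I expect this homogeneity step to be the delicate point, and it works only because the Kirchhoff Laplacian has no kernel on a noncompact graph.

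\textbf{Part (ii).} This inhomogeneous version follows from (iii) applied componentwise. Alternatively, and more directly, extend each edge function to $\R$ by a bounded $H^{1/2}$ extension operator and use the Euclidean estimate $\|u\|_q^q\le C\|u\|_2^2\|u\|_{H^{1/2}}^{q-2}$, which holds since $\|u\|_{H^{1/2}}\ge\|u\|_2$. Summing over the finitely many edges, using $\sum a_e^2b_e^{q-2}\le(\sum a_e^2)(\max b_e)^{q-2}$, gives (ii).
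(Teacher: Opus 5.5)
Your parts (i) and (ii) are fine. You are right that (i) fails for edgewise $H^1$ functions that are not continuous at the vertices; the Adami--Serra--Tilli inequality the paper cites is for continuous functions. Your extension argument for (ii) is essentially the paper's ($H^s(\G)\hookrightarrow L^q$ plus $H^s=[L^2,H^{1/2}]_{2s}$). Drop the remark that (ii) follows from (iii): an edgewise $H^{1/2}$ function need not lie in $\operatorname{dom}((L_\G^{\mathrm K})^{1/4})$, which carries a matching condition at the vertices.

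The genuine gap is in (iii), with $A=L_\G^{\mathrm K}$, and the route you finally adopt does not work. Interpolating the identity between $L^2\to L^2$ and $\operatorname{dom}(A^{1/2})\to L^\infty$ at $\theta=\tfrac12$ gives at best an inhomogeneous bound into $L^4$, not into every $L^r$. More seriously, the lower-order term cannot be removed by dilation. When the compact core is nonempty, as in the paper, $f\mapsto f(\cdot/\lambda)$ sends $\G$ to a different graph $\lambda\G$. Letting $\lambda\to0$ then requires the inhomogeneous constant on $\lambda\G$ to stay bounded. By the same scaling computation, that is equivalent to the homogeneous inequality you are trying to prove, so the step is circular. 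Triviality of $\ker A$ is necessary but does not supply this uniformity.

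The missing idea is a transfer to a scale-invariant model, which is what the paper does. It rearranges onto $\R^+$ and proves the fractional P\'olya--Szeg\H{o} inequality $\|L_+^{1/4}u^*\|_{L^2}\le\|A^{1/4}u\|_{L^2}$ via the quadratic $K$-functional. That step uses the spectral identity $\int_0^\infty t^{-1/2}\mathfrak K_\G(t,u)^2\,\frac{dt}{t}=\pi\|A^{1/4}u\|_{L^2}^2$, $L^2$-contractivity of rearrangement, and the $H^1$ P\'olya--Szeg\H{o} inequality. The paper then gets the half-line inequality by even extension and the cosine transform.

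The layer-cake route you abandoned is actually closer to a proof. Your computation gives $\|f\|_q^q\le C\bigl(\|f\|_2\|A^{1/4}f\|_2\bigr)^{\frac{q-2}{2}}\int\lambda^{\frac{q-2}{8}}\,d\mu_f$. For $2<q\le6$, H\"older in the spectral variable closes it exactly: $\int\lambda^{\alpha}\,d\mu_f\le\|f\|_2^{2(1-2\alpha)}\|A^{1/4}f\|_2^{4\alpha}$ with $\alpha=\frac{q-2}{8}\le\frac12$. For $q>6$ it cannot close. Your low-frequency bound $\|f_{\le}\|_\infty^2\le C\Lambda^{1/4}\|f\|_2\|A^{1/4}f\|_2$ already contributes $\|f\|_2^{(q-2)/2}$, and $(q-2)/2>2$.

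To reach all $q$ you would need a dyadic bound instead. Apply (i) on $H^1_{\mathrm K}$ to each block $f_k=E_A((2^{k-1},2^k])f$; this gives $\|f_k\|_\infty\le C2^{k/(2q)}\|A^{s/2}f\|_2$ with $s=\frac12-\frac1q$. Sum over $2^k\le\Lambda$, which is where $\ker A=\{0\}$ is used, and rerun the layer-cake argument. This yields $\|f\|_q\le C\|A^{s/2}f\|_2$, and then $\|A^{s/2}f\|_2\le\|f\|_2^{1-2s}\|A^{1/4}f\|_2^{2s}$ gives (iii). That would be a rearrangement-free alternative to the paper's proof, but your proposal as written does not contain it.
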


\begin{proof}
        \begin{enumerate}[label=\textup{(\roman*)}]\item
  The proof of \eqref{eq:GN_H1_graph} can be found in \cite{AdamiSerraTilliJFA2016}.
\item 
Set
\[
s:=\frac12-\frac1q\in\left(0,\frac12\right),
\qquad
\theta:=2s=1-\frac2q.
\]
Then
\[
1-\theta=\frac2q.
\]

The Sobolev embedding
\(
H^s(\G)\hookrightarrow L^q(\G),
\)
gives
\begin{equation}\label{1}
        \|u\|_{L^q(\G)}
\le
C\|u\|_{H^s(\G)}.
\end{equation}
It follows from the interpolation 
\[
H^s(\G)
=
\bigl[L^2(\G),H^{1/2}(\G)\bigr]_{\theta},
\qquad
\theta=2s
\]
we get
\begin{equation}\label{2}
  \|u\|_{H^s(\G)}
\le
C
\|u\|_{L^2(\G)}^{1-\theta}
\|u\|_{H^{1/2}(\G)}^{\theta}.      
\end{equation}
Combining \eqref{1} with \eqref{2} gives
\[
\|u\|_{L^q(\G)}
\le
C
\|u\|_{L^2(\G)}^{1-\theta}
\|u\|_{H^{1/2}(\G)}^{\theta}.
\]
This proves the desired inequality.
  \item 
We first prove 
 the fractional Gagliardo--Nirenberg inequality on the half-line.
Let \(L_+\) be the Laplacian operator on \(\mathbb R^+\) with domain
\[
\dom(L_+)=\{g\in H^2(\mathbb R^+):g'(0)=0 \}.
\] 
For every \(g\in \dom(L_+^{1/4})\), let \(Eg\) be the even extension of \(g\) to \(\mathbb R\):
\[
(Eg)(x):=g(|x|),\qquad x\in\mathbb R.
\]
 Let \(\mathcal C\) be the unitary cosine transform on \(L^2(\mathbb R^+)\), that is
\[
(\mathcal Cg)(\xi)
=
\sqrt{\frac2\pi}
\int_0^\infty g(x)\cos(x\xi)\,dx.
\qquad \xi\ge0.
\]
Then
\[
\mathcal C(L_+g)(\xi)=\xi^2(\mathcal Cg)(\xi),
\]
and
\[
\|L_+^{1/4}g\|_{L^2(\mathbb R^+)}^2
=
\int_0^\infty \xi\,|\mathcal Cg(\xi)|^2\,d\xi.
\]
Using the unitary Fourier transform on \(\mathbb R\), one has
\[
\widehat{Eg}(\xi)=\mathcal Cg(|\xi|)
\qquad\text{for a.e. }\xi\in\mathbb R.
\]
Therefore
\begin{align}
\|(-\Delta)^{1/4}Eg\|_{L^2(\mathbb R)}^2
&=
\int_{\mathbb R}|\xi|\,|\widehat{Eg}(\xi)|^2\,d\xi \notag\\
&=
2\int_0^\infty \xi\,|\mathcal Cg(\xi)|^2\,d\xi \notag\\
&=
2\|L_+^{1/4}g\|_{L^2(\mathbb R^+)}^2 .
\label{eq:even_extension_half_derivative}
\end{align}
Applying  one-dimensional fractional Gagliardo--Nirenberg
inequality 
\begin{equation}
\label{eq:GN_real_line_fractional}
\|h\|_{L^q(\mathbb R)}
\le
C_q
\|h\|_{L^2(\mathbb R)}^{2/q}
\|(-\Delta)^{1/4}h\|_{L^2(\mathbb R)}^{1-2/q}.
\end{equation}
 to \(h=Eg\), and using \eqref{eq:even_extension_half_derivative}, we get
 the fractional Gagliardo--Nirenberg inequality on the half-line:
\begin{align}\label{half-gn}
2\|g\|_{L^q(\mathbb R^+)}^q
&=
\|Eg\|_{L^q(\mathbb R)}^q                                                   \\
&\le
C_q
\|Eg\|_{L^2(\mathbb R)}^2
\|(-\Delta)^{1/4}Eg\|_{L^2(\mathbb R)}^{q-2}                                      \\
&=
C_q
\left(2\|g\|_{L^2(\mathbb R^+)}^2\right)
\left(\sqrt2\,\|L_+^{1/4}g\|_{L^2(\mathbb R^+)}\right)^{q-2}.
\end{align}

Now, we prove the fractional P\'olya--Szeg\H{o} inequality by the quadratic \(K\)-method.
Let \(u\ge0\), \(u\in L^2(\G)\), and let \(u^*\) be its decreasing rearrangement on
\(\mathbb R^+\):
\[
u^*(s)
:=
\inf\{t\ge0:\rho_u(t)\le s\},
\qquad
\rho_u(t)
:=
\operatorname{meas}\{x\in \G:\ u(x)>t\}.
\]
Equimeasurability gives
\begin{equation}
\label{eq:equimeasurability}
\|u^*\|_{L^r(\mathbb R^+)}
=
\|u\|_{L^r(\G)}
\qquad\text{for every }r\in[1,+\infty).
\end{equation}
For \(t>0\) and \(u\in L^2(\G)\), define the quadratic \(K\)-functional 
\[
\mathfrak K_\G(t,u)^2
:=
\inf_{v\in H^1_{\mathrm{K}}(\G)}
\left\{
\|u-v\|_{L^2(\G)}^2
+
t\int_\G |v'(x)|^2\,dx
\right\}.
\]
Similarly, for \(g\in L^2(\mathbb R^+)\), set
\[
\mathfrak K_+(t,g)^2
:=
\inf_{\phi\in H^1_{\mathrm{K}}(\mathbb R^+)}
\left\{
\|g-\phi\|_{L^2(\mathbb R^+)}^2
+
t\int_0^\infty |\phi'(s)|^2\,ds
\right\}.
\]
For convenience, we denote the Laplacian operator on \(L^2(\G)\) with Kirchhoff
conditions.
Let \(E_\G(\lambda)\) be the spectral resolution of \(L_\G^{\mathrm{K}}\), and let
\[
d\mu_u^\G(\lambda)
:=
d\|E_\G(\lambda)u\|_{L^2(\G)}^2.
\]
By the spectral theorem,
\begin{equation}
\label{eq:K_spectral_graph}
\mathfrak K_\G(t,u)^2
=
\int_{[0,+\infty)}
\frac{t\lambda}{1+t\lambda}\,d\mu_u^\G(\lambda).
\end{equation}
Indeed, in the spectral representation of \(L_\G^{\mathrm{K}}\), the minimization becomes
pointwise in the spectral variable. For a scalar spectral component \(z\), one has
\[
\inf_{y\in\mathbb C}
\left\{
|z-y|^2+t\lambda |y|^2
\right\}
=
\frac{t\lambda}{1+t\lambda}|z|^2,
\]
with minimizer \(y=(1+t\lambda)^{-1}z\). Integrating this pointwise identity gives
\eqref{eq:K_spectral_graph}. The same formula holds on \(\mathbb R^+\), with
\(L_+\) and \(\mathfrak K_+\). Using \eqref{eq:K_spectral_graph}, we get
\begin{align}
\int_0^\infty
t^{-1/2}\mathfrak K_\G(t,u)^2\,\frac{dt}{t}
&=
\int_{[0,+\infty)}
\left(
\int_0^\infty
t^{-1/2}\frac{t\lambda}{1+t\lambda}\,\frac{dt}{t}
\right)
d\mu_u^\G(\lambda) \notag\\
&=
\int_{[0,+\infty)}
\left(
\int_0^\infty
\frac{\lambda t^{-1/2}}{1+t\lambda}\,dt
\right)
d\mu_u^\G(\lambda) \notag\\
&=
\pi
\int_{[0,+\infty)}
\lambda^{1/2}\,d\mu_u^\G(\lambda) \notag\\
&=
\pi\|(L_\G^{\mathrm{K}})^{1/4}u\|_{L^2(\G)}^2.
\label{eq:K_identity_graph_Hhalf}
\end{align}
In the third equality we use the fact 
\[
\int_0^\infty \frac{s^{-1/2}}{1+s}\,ds=\pi.
\]
Again, the same identity holds on \(\mathbb R^+\):
\begin{equation}
\label{eq:K_identity_halfline_Hhalf}
\int_0^\infty
t^{-1/2}\mathfrak K_+(t,g)^2\,\frac{dt}{t}
=
\pi\|L_+^{1/4}g\|_{L^2(\mathbb R^+)}^2.
\end{equation}
We now compare \(\mathfrak K_+(t,u^*)\) and \(\mathfrak K_\G(t,u)\). Let
\(v\in H^1(\G)\). Since \(u\ge0\), replacing \(v\) by \(|v|\), we have
\[
\|u-|v|\|_{L^2(\G)}
\le
\|u-v\|_{L^2(\G)},
\qquad
\int_\G |(|v|)'|^2\,dx
\le
\int_\G |v'|^2\,dx.
\]
Thus, in the infimum defining \(\mathfrak K_\G(t,u)\), it is enough to consider
nonnegative \(v\).
It follows from  Hardy--Littlewood rearrangement inequality
\[
\int_\G uv\,dx
\le
\int_0^\infty u^*v^*\,ds,
\]
we obtain 
\begin{equation}
\label{eq:rearrangement_L2_contraction}
\|u^*-v^*\|_{L^2(\mathbb R^+)}
\le
\|u-v\|_{L^2(\G)}.
\end{equation}
Moreover, the P\'olya--Szeg\H{o} inequality for decreasing rearrangements yields
\[
\int_0^\infty |(v^*)'(s)|^2\,ds
\le
\int_\G |v'(x)|^2\,dx.
\]
Consequently, for every nonnegative \(v\in H^1_{\mathrm{K}}(\G)\),
\begin{align*}
\mathfrak K_+(t,u^*)^2
&\le
\|u^*-v^*\|_{L^2(\mathbb R^+)}^2
+
t\int_0^\infty |(v^*)'(s)|^2\,ds                                      \\
&\le
\|u-v\|_{L^2(G)}^2
+
t\int_G |v'(x)|^2\,dx.
\end{align*}
Taking the infimum over \(v\in H^1_{\mathrm{K}}(\G)\), we obtain
\begin{equation}
\label{eq:K_contraction_graph_halfline}
\mathfrak K_+(t,u^*)
\le
\mathfrak K_\G(t,u)
\qquad\text{for every }t>0.
\end{equation}
Combining \eqref{eq:K_contraction_graph_halfline},
\eqref{eq:K_identity_graph_Hhalf}, and \eqref{eq:K_identity_halfline_Hhalf}, we get
the fractional P\'olya--Szeg\H{o} inequality
\begin{equation}
\label{eq:fractional_polya_szego_Hhalf}
\|L_+^{1/4}u^*\|_{L^2(\mathbb R^+)}
\le
\|(L_\G^{\mathrm{K}})^{1/4}u\|_{L^2(\G)}.
\end{equation}
Assume first that \(f\ge0\).
Using the Gagliardo--Nirenberg inequality on the half-line \eqref{half-gn}, with \(g=f^*\), and then
the fractional P\'olya--Szeg\H{o} inequality
\eqref{eq:fractional_polya_szego_Hhalf}, we get
\begin{align*}
\|f\|_{L^q(\G)}^q
&=
\|f^*\|_{L^q(\mathbb R^+)}^q                                      \\
&\le
C_q
\|f^*\|_{L^2(\mathbb R^+)}^2
\|L_+^{1/4}f^*\|_{L^2(\mathbb R^+)}^{q-2}                         \\
&\le
C_q
\|f\|_{L^2(\G)}^2
\|(L_\G^{\mathrm{K}})^{1/4}f\|_{L^2(\G)}^{q-2}.
\end{align*}
Applying the nonnegative case to \(|f|\) proves \eqref{eq:Hhalf_GN_graph}.

\end{enumerate}
\end{proof}

By combining \eqref{3} and \eqref{eq:new}, we have the following results.

\begin{corollary}\label{lem:GNDirac}
Let $q\ge2$ and $c\ge1$. There exists a constant $C_q>0$, independent of $c$, such that 
for every $u\in Y_c$,
\begin{equation}\label{eq:GNDirac}
\int_\G |u|^q\dd x
\le C_{q}\,c^{-\frac{q-2}{2}}\|u\|_c^{q-2}\|u\|_{L^2}^2.
\end{equation}
\end{corollary}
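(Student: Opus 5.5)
The estimate should follow by combining the fractional Gagliardo--Nirenberg inequality \eqref{eq:new} of Lemma~\ref{lem:graph_Hhalf_GN}(ii) with the lower bound in \eqref{3}, applied componentwise to the spinor. Write $u=(f,g)\in Y_c$. Since $Y_c=Y_1$ is a closed subspace of $H^{1/2}(\G,\C^2)$, each component belongs to $H^{1/2}(\G)$.

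First, $|u|^q=(|f|^2+|g|^2)^{q/2}\le 2^{q/2-1}(|f|^q+|g|^q)$. Applying \eqref{eq:new} to $f$ and to $g$, and bounding each component's $L^2$ and $H^{1/2}$ norms by those of $u$, gives
\[
\int_\G|u|^q\dd x\le C_q\|u\|_{L^2}^2\|u\|_{H^{1/2}}^{q-2}.
\]
(Alternatively, one can apply the scalar inequality directly to $|u|$, but this requires $|u|\in H^{1/2}$ with norm control. The componentwise splitting avoids that issue.)

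Next, for $c\ge1$ I would invoke \eqref{3}: $C^{-1}c\|u\|_{H^{1/2}}^2\le\|u\|_c^2$, that is, $\|u\|_{H^{1/2}}\le C^{1/2}c^{-1/2}\|u\|_c$. Raising this to the power $q-2\ge0$ gives $\|u\|_{H^{1/2}}^{q-2}\le C'c^{-\frac{q-2}{2}}\|u\|_c^{q-2}$. Substituting into the previous estimate yields \eqref{eq:GNDirac}, with a constant depending only on $q$, $m$ and $\G$, and not on $c$. The case $q=2$ is trivial.

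The only real point to check is that the constant in the lower bound of \eqref{3} is uniform in $c$. The paper states \eqref{3} for $c>1$; at $c=1$ it holds by the norm equivalence $\|u\|_1\simeq\|u\|_{H^{1/2}}$. Uniformity is also visible spectrally: $(c^2\lambda+m^2c^4)^{1/2}\ge c(\lambda+m^2)^{1/2}$ for $c\ge1$, so $\|u\|_c^2\ge c\|u\|_1^2\ge c\,C^{-2}\|u\|_{H^{1/2}}^2$. Once this is confirmed, the corollary is immediate.
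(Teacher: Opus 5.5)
Your proposal is correct and follows the paper's argument, which simply combines the fractional Gagliardo--Nirenberg inequality \eqref{eq:new} with the lower bound in \eqref{3}. The componentwise splitting and the spectral bound $(c^2\lambda+m^2c^4)^{1/2}\ge c(\lambda+m^2)^{1/2}$ for $c\ge1$ just spell out what the paper leaves implicit: the vector-valued case, the uniformity of the constant, and the endpoint $c=1$.
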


We now recall the notion of genus due to M. A. Krasnosel'skii, for a complete discussion, see
\cite{Krasnoselskii1964,Rabinowitz1986}.
Consider a real Banach space $E$ with norm $\|\cdot\|_E$, and a real Hilbert space $H$ equipped with inner product $(\cdot, \cdot)_H$. Define the manifold
\begin{equation*}
    \mathcal{M} := \{u \in E \mid (u, u)_H = 1\}.
\end{equation*}

Let $\Sigma(\mathcal{M})$ be the collection of all closed and symmetric subsets of $\mathcal{M}$. For any nonempty $A \in \Sigma(\mathcal{M})$, the genus $\gamma(A)$ is the smallest integer $k \geq 1$ for which an odd continuous map $\phi: A \to \mathbb{R}^k \setminus \{0\}$ exists; i.e.,
$$
\gamma(A) := \min\{n \in \mathbb{N} : \exists \phi : A \rightarrow \mathbb{R}^n \setminus \{0\}, \, \varphi \text{ continuous and odd}\}.
$$
If there is no such integer, we set $\gamma(A) = \infty$. For each $k \in \mathbb{N}$, define
\begin{equation*}
    \Gamma_k := \{A \in \Sigma(\mathcal{M}) \mid \gamma(A) \geq k\}.
\end{equation*}

The following minimax theorem, which is taken from \cite[Theorem 2.1]{JeanjeanLu2019}, is presented without proof; the reader is referred to the original source for details.

\begin{theorem}\label{minimax}
Let $\mathcal{I}: E \to \mathbb{R}$ be an even $C^1$ functional. Assume that $I|_\mathcal{M}$ is bounded below and satisfies the (PS)$_d$ condition for every $d \in \Xi\subset \mathbb{R}$. Additionally, suppose that $\Gamma_k \neq \emptyset$ for all $k \in \mathbb{N}$. Then one can define the minimax values $-\infty < \lambda_1 \leq  \lambda_2 \leq  \cdots \leq  \lambda_n\leq \cdots$ by
\begin{equation*}
    \lambda_k := \inf_{A \in \Gamma_k} \sup_{u \in A} \mathcal{I}(u), \quad k \geq  1,
\end{equation*}
and the following statements hold:
\begin{enumerate}[label=\textup{(\roman*)}]
    \item If $\lambda_k \in \Xi$, then $\lambda_k$ is a critical value of $\mathcal{I}|_\mathcal{M}$.
    \item Let $L^\lambda$ denote the set of critical points of $\mathcal{I}|_\mathcal{M}$ at level $\lambda \in \mathbb{R}$. If
    \begin{equation*}
        \lambda_k = \lambda_{k+1} = \cdots = \lambda_{k+l-1} =: \lambda \in \Xi  \quad \text{for some } k, l \geq  1,
    \end{equation*}
    then $\gamma(L^\lambda) \geq  l$. Consequently, $I|_\mathcal{M}$ possesses infinitely many critical points at level $\lambda$ whenever $l\geq  2$.
\end{enumerate}
\end{theorem}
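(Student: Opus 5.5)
The plan is to follow the classical Ljusternik--Schnirelmann scheme for the Krasnosel'skii genus on the constraint manifold $\mathcal M$. The two ingredients are an \emph{equivariant deformation lemma} at a fixed level $d\in\Xi$ and the standard properties of $\gamma$: monotonicity under odd continuous maps, subadditivity, and the continuity property for compact symmetric sets not containing $0$.

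\textbf{Step 0: setup.} I first check that $\mathcal M$ is a $C^1$ Hilbert--Banach submanifold of codimension one. This holds because $u\mapsto (u,u)_H$ is $C^1$ on $E$ (which I assume embeds continuously in $H$), and its derivative $2(u,\cdot)_H$ does not vanish on $\mathcal M$. Moreover, $\mathcal M$ is symmetric and $0\notin\mathcal M$. Since $\mathcal I|_{\mathcal M}$ is bounded below and $\Gamma_k\neq\emptyset$, each $\lambda_k$ is a finite real number. The chain $\lambda_k\le\lambda_{k+1}$ follows from $\Gamma_{k+1}\subset\Gamma_k$.

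\textbf{Step 1: deformation lemma.} Fix $d\in\Xi$ and let $K_d$ be the set of constrained critical points at level $d$. By (PS)$_d$, $K_d$ is compact, and it is symmetric because $\mathcal I$ is even. For any symmetric neighbourhood $N$ of $K_d$ (with $N=\emptyset$ if $K_d=\emptyset$), I claim there exist $\varepsilon>0$ and an odd homeomorphism $\eta:\mathcal M\to\mathcal M$ such that
\[
\eta\big(\{\mathcal I\le d+\varepsilon\}\setminus N\big)\subset\{\mathcal I\le d-\varepsilon\}.
\]
To prove it, I first use (PS)$_d$ to obtain $\delta>0$ with $\|d(\mathcal I|_{\mathcal M})(u)\|\ge\delta$ whenever $|\mathcal I(u)-d|\le 2\varepsilon$ and $u\notin N$. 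Next I build a locally Lipschitz pseudo-gradient vector field $V$ tangent to $\mathcal M$ on the regular set, and make it odd by replacing $V(u)$ with $\tfrac12\big(V(u)-V(-u)\big)$; this is allowed because $d\mathcal I(-u)=-d\mathcal I(u)$. Finally I cut $V$ off outside the strip $|\mathcal I-d|\le2\varepsilon$ and away from $N$, and let $\eta$ be the time-one map of the resulting flow on $\mathcal M$. Only (PS)$_d$ at the single level $d$ is used, which is why the hypothesis need only hold for $d\in\Xi$.

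\textbf{Step 2: proof of (i).} Suppose $\lambda_k\in\Xi$ is not a critical value. Apply Step 1 with $N=\emptyset$, and pick $A\in\Gamma_k$ with $\sup_A\mathcal I\le\lambda_k+\varepsilon$. The set $\eta(A)$ is closed and symmetric, since $\eta$ is an odd homeomorphism. By monotonicity, $\gamma(\eta(A))\ge\gamma(A)\ge k$, while $\sup_{\eta(A)}\mathcal I\le\lambda_k-\varepsilon$. This contradicts the definition of $\lambda_k$.

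\textbf{Step 3: proof of (ii).} Assume $\lambda_k=\dots=\lambda_{k+l-1}=\lambda\in\Xi$, and suppose for contradiction that $\gamma(K_\lambda)\le l-1$. Since $K_\lambda$ is compact, symmetric and avoids $0$, the continuity property of genus gives a closed symmetric $\delta$-neighbourhood $N$ of $K_\lambda$ with $\gamma(N)=\gamma(K_\lambda)\le l-1$.
\begin{itemize}
\item Take $\varepsilon,\eta$ from Step 1 for this $N$, and pick $A\in\Gamma_{k+l-1}$ with $\sup_A\mathcal I\le\lambda+\varepsilon$.
\item By subadditivity, $\gamma\big(\overline{A\setminus N}\big)\ge (k+l-1)-(l-1)=k$.
\item Its image under $\eta$ lies in $\{\mathcal I\le\lambda-\varepsilon\}$ and still has genus at least $k$, which contradicts $\lambda_k=\lambda$.
\end{itemize}
Hence $\gamma(K_\lambda)\ge l$. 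When $l\ge2$, $K_\lambda$ must be infinite, because any finite symmetric set avoiding $0$ has genus $1$.

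\textbf{Main obstacle.} I expect the main difficulty to be Step 1, namely producing an \emph{odd} pseudo-gradient flow that stays on $\mathcal M$ when $E\neq H$. The difficulty is that the tangent projection $w\mapsto w-(u,w)_H\,u$ must be shown to be well defined and continuous on $E$. For this I would rely on the continuous embedding $E\hookrightarrow H$ and a partition-of-unity construction of $V$. I would also need to check that the flow is globally defined, which follows from the cutoff making $V$ bounded.
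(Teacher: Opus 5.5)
Your proposal is correct. It is the standard Ljusternik--Schnirelmann argument: an odd deformation lemma on $\mathcal M$ that uses (PS)$_d$ only at the single level $d\in\Xi$, combined with monotonicity, subadditivity and continuity of the genus. The paper gives no proof of its own and refers to \cite[Theorem 2.1]{JeanjeanLu2019}, which is the same deformation-plus-genus scheme. One small correction to Step 1: cutting $V$ off outside the strip $|\mathcal I-d|\le 2\varepsilon$ does not by itself make $V$ bounded, so normalize it (e.g.\ use $\chi V/\|V\|_E$ with a locally Lipschitz cutoff $\chi$) to obtain a globally defined flow whose time-one map is your odd homeomorphism.
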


Recall that
\begin{equation}\label{eq:Sinf}
S_\infty=\{f\in H_\mathrm{K}^1(\G):\|f\|_{L^2}^2=1\}
\end{equation}
and 
\begin{equation*}
\mathcal{I}_\infty(f)
=
\frac{1}{2m}\int_{\mathcal G}|f'|^2\,dx
-\frac{2a}{p}\int_{\mathcal K}|f|^p\,dx,
\qquad 2<p.
\end{equation*}
 For $j\in\N$, set
\begin{equation}\label{eq:einfj}
\Gamma_{\infty,j}:=\{A\subset S_\infty:A\text{ compact, symmetric, and }\gamma(A)\ge j\},
\qquad
e_j^a:=\inf_{A\in\Gamma_{\infty,j}}\sup_{f\in A}\mathcal I_\infty(f).
\end{equation}

The following lemma is precisely the negative level 
part of Serra and Tentarelli's genus 
construction for localized NLS bound states, 
see the proof of \cite[Theorem 1.2]{SerraTentarelliJDE2016}.

\begin{lemma}\label{lem:NLSnegative}
Let $2<p<6$. For every $k\in\N$ there exists $a_k>0$ such that, for every $a\ge a_k$,
\begin{equation}\label{eq:NLSnegative}
e_1^a\le e_2^a\le\cdots\le e_k^a<0.
\end{equation}
\end{lemma}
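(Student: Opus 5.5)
The plan is to exhibit, for each $k$, a compact symmetric set $A\subset S_\infty$ with $\gamma(A)\ge k$ on which $\mathcal I_\infty<0$ once $a$ is large, concentrated on the compact core. Monotonicity $e_1^a\le e_2^a\le\cdots$ is immediate because $\Gamma_{\infty,j+1}\subset\Gamma_{\infty,j}$. Each $e_j^a$ is finite, i.e.\ $>-\infty$, by the Gagliardo--Nirenberg inequality \eqref{eq:GN_H1_graph}: for $p<6$ we have $\frac p2-1<2$, so
\[
\int_\K|f|^p\le C_p\|f'\|_{L^2}^{\frac p2-1},
\]
which makes $\mathcal I_\infty$ bounded below on $S_\infty$. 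Hence everything reduces to showing $e_k^a<0$.

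For the construction, fix a bounded edge $e\in E_\K$, which exists since $\K\neq\emptyset$. Choose $k$ functions $\varphi_1,\dots,\varphi_k\in H^1_0(I_e)$ with pairwise disjoint supports in the interior of $I_e$, extended by zero to the whole graph. They belong to $H^1_{\mathrm K}(\G)$, since they vanish at the vertices, and are supported in $\K$. Let $V_k=\Span\{\varphi_1,\dots,\varphi_k\}$, a $k$-dimensional real subspace (working with real-valued functions suffices; in the complex setting one may still use real multiples). Set $A:=V_k\cap S_\infty$. This is the unit sphere of a $k$-dimensional normed space, so it is compact and symmetric, and by the classical Borsuk--Ulam property of genus, $\gamma(A)=k$. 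Thus $A\in\Gamma_{\infty,k}$.

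It remains to estimate the energy on $A$. By compactness and finite dimensionality, $M_k:=\max_{f\in A}\frac1{2m}\|f'\|_{L^2}^2<\infty$. Also
\[
\delta_k:=\min_{f\in A}\int_\K|f|^p>0,
\]
since the map $f\mapsto\int_\K|f|^p$ is continuous on $A$ and positive there: each $f\in A$ is nonzero and supported in $\K$. Therefore
\[
\sup_{A}\mathcal I_\infty\le M_k-\frac{2a}{p}\delta_k<0
\qquad\text{whenever } a> a_k:=\frac{pM_k}{2\delta_k},
\]
and this gives $e_k^a<0$ for every $a\ge a_k$ (enlarging $a_k$ slightly if strict inequality is needed at the endpoint). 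Since $e_j^a\le e_k^a$ for $j\le k$, the full chain \eqref{eq:NLSnegative} follows.

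The only delicate point is the genus computation, namely that a sphere in a $k$-dimensional space has genus exactly $k$ (it is at least $k$ by Borsuk--Ulam). This is standard and is used as a black box from genus theory. Everything else is elementary, because the nonlinearity is localized and the test functions are chosen inside $\K$.
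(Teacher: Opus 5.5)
Your proposal is correct and follows essentially the route the paper relies on. The paper gives no proof of its own; it defers to Serra--Tentarelli's genus construction, which likewise takes the $L^2$-unit sphere of a $k$-dimensional space of functions supported in the compact core (genus $k$ by Borsuk--Ulam) and uses large mass, equivalent to large $a$ via \eqref{eq:massCouplingNormalization}, to make the localized $L^p$ term dominate the bounded kinetic term, so that $\sup_A\mathcal I_\infty<0$.
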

\begin{remark}
  In \cite{SerraTentarelliJDE2016}, the authors consider the functional
\[
E(u)=\frac12\int_\G |u'|^2\dd x-\frac1p\int_\K |u|^p\dd x,
\qquad
\|u\|_{L^2}^2=\mu .
\]
By normalization $f = u/\sqrt{\mu}$, we have
\begin{equation}\label{eq:massCouplingNormalization}
\frac{E(\sqrt\mu f)}{\mu}
=\frac12\int_\G |f'|^2\dd x-\frac{\mu^{(p-2)/2}}{p}\int_\K |f|^p\dd x.
\end{equation}
Consequently, the requirement in Theorem 1.2 of \cite{SerraTentarelliJDE2016} that $\mu$ be sufficiently large is equivalent to the condition that $a$ be sufficiently large in the present paper.
\end{remark}

The following lemma shows the compactness of Palais-Smale sequences with negative energy for $\mathcal I_\infty|_{S_\infty}$, see Proposition 4.4 in \cite{SerraTentarelliJDE2016}.
\begin{lemma}\label{lem:NLSPS}
Let $2<p<6$. The functional $\mathcal I_\infty|_{S_\infty}$ satisfies the Palais--Smale condition at every level $d<0$.
\end{lemma}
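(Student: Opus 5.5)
We need to prove Lemma \ref{lem:NLSPS}: a Palais--Smale sequence of $\mathcal I_\infty|_{S_\infty}$ at a level $d<0$ is precompact in $H^1_{\mathrm K}(\G)$.

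Let $(f_n)\subset S_\infty$ satisfy $\mathcal I_\infty(f_n)\to d<0$ and $\|d\mathcal I_\infty|_{S_\infty}(f_n)\|\to0$. The plan is to establish, in order, boundedness, a weak limit, identification of the Lagrange multiplier, and strong convergence.

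\textbf{Step 1: boundedness.} By \eqref{eq:GN_H1_graph},
\[
\int_\K|f_n|^p\le C\|f_n'\|_{L^2}^{\frac p2-1}.
\]
Since $\frac p2-1<2$ for $p<6$, this gives
\[
\mathcal I_\infty(f_n)\ge \frac1{2m}\|f_n'\|^2-C\|f_n'\|^{\frac p2-1}.
\]
Because $\mathcal I_\infty(f_n)$ is bounded, $(f_n)$ is bounded in $H^1$.

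\textbf{Step 2: weak limit and its $\K$-part.} Up to a subsequence, $f_n\rightharpoonup f$ weakly in $H^1_{\mathrm K}$. Since $\K$ is compact, $f_n\to f$ in $L^p(\K)$ and uniformly on $\K$.

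\textbf{Step 3: the multiplier is positive.} The PS condition gives $\nu_n\in\R$ with
\[
-\tfrac1{2m}f_n''+\nu_nf_n-a\chi_\K|f_n|^{p-2}f_n\to0 \quad\text{in } (H^1_{\mathrm K})^*.
\]
Testing with $f_n$ yields
\[
\nu_n=-\tfrac1{2m}\|f_n'\|^2+a\int_\K|f_n|^p+o(1),
\]
which is bounded, so $\nu_n\to\nu$. Combining this with the energy identity,
\[
\nu_n=-2\mathcal I_\infty(f_n)+a\Big(1-\tfrac4p\Big)\int_\K|f_n|^p+o(1).
\]
This does not immediately have a sign when $p<4$. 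A cleaner route is the following. Suppose $\int_\K|f|^p=0$. Then $\int_\K|f_n|^p\to0$, so
\[
\liminf\mathcal I_\infty(f_n)\ge0,
\]
contradicting $d<0$. Hence $f\neq0$.

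To get $\nu>0$, use the weak limit equation on each half-line $e$, where $f$ is nonlinearity-free: $-\frac1{2m}f''+\nu f=0$ with $f\in L^2$. For $\nu\le0$ this forces $f\equiv0$ on every half-line. Then the problem on $\K$ alone gives no immediate contradiction, so I would argue directly from the energy instead. Since $\nu_n=-2d+a(1-\frac4p)\int_\K|f_n|^p+o(1)$, in the case $p\ge4$ we get $\nu\ge-2d>0$ at once. For $p<4$, write
\[
\nu_n = -\tfrac1{2m}\|f_n'\|^2 + a\int_\K|f_n|^p + o(1)
\]
and compare with $d$:
\[
\nu = -2d + \lim\Big[\tfrac{2}{p}\cdot 2a\int_\K|f_n|^p - a\int_\K|f_n|^p\Big]\cdot(\ldots).
\]
This bookkeeping is the main obstacle. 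What I would try is the following. On each half-line the sequence satisfies asymptotically a linear equation, so the $L^2$ mass escaping to infinity carries nonnegative kinetic energy and zero potential energy. Writing $f_n=f+w_n$ with $w_n\rightharpoonup0$, Brezis--Lieb gives
\[
d=\mathcal I_\infty(f)\,\|f\|^2\text{-scaled}+\tfrac1{2m}\lim\|w_n'\|^2 .
\]
If $\|f\|_{L^2}^2=\mu<1$, scaling $f/\sqrt\mu$ and using $p>2$ gives
\[
\mathcal I_\infty\big(f/\sqrt\mu\big)<\mathcal I_\infty(f)/\mu\le d/\mu<d,
\]
which yields strict subadditivity. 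Then the test against $f$ minus the test against $f_n$ shows that $\nu\|w_n\|^2+\frac1{2m}\|w_n'\|^2\to0$ requires $\nu>0$. I would get $\nu>0$ via: if $\nu\le0$, then $f$ solves an equation whose Pohozaev/virial identity on the half-lines combined with $\mathcal I_\infty(f)<0$ is contradictory. I expect this sign argument to be the delicate point; Serra--Tentarelli handle it this way.

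\textbf{Step 4: strong convergence.} Subtract the limit equation tested with $f_n-f$ from the PS relation tested with $f_n-f$. The nonlinear terms vanish by compactness on $\K$, which leaves
\[
\tfrac1{2m}\|(f_n-f)'\|^2+\nu\|f_n-f\|_{L^2}^2\to0.
\]
With $\nu>0$ this gives $f_n\to f$ in $H^1$, and $f\in S_\infty$.
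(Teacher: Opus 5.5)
There is a genuine gap in Step 3: you never prove that the limiting multiplier $\nu$ is positive. Without $\nu>0$, Step 4 gives no control of $\|f_n-f\|_{L^2}$, so mass could escape along the half-lines.

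You got stuck because of an algebra slip. Testing the Palais--Smale relation with $f_n$ gives $\nu_n=a\int_\K|f_n|^p\,dx-\frac1{2m}\|f_n'\|_{L^2}^2+o(1)$. Substituting $\frac1{2m}\|f_n'\|_{L^2}^2=\mathcal I_\infty(f_n)+\frac{2a}{p}\int_\K|f_n|^p\,dx$ then yields
\[
\nu_n=-\mathcal I_\infty(f_n)+a\,\frac{p-2}{p}\int_\K|f_n|^p\,dx+o(1).
\]
Your expression $-2\mathcal I_\infty(f_n)+a(1-\frac4p)\int_\K|f_n|^p\,dx$ is wrong: it differs from $\nu_n$ by $-\frac1{2m}\|f_n'\|_{L^2}^2$. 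Since $p>2$, the correct identity gives $\nu=\lim\nu_n\ge -d>0$ for every $p\in(2,6)$. No case distinction at $p=4$ is needed. The detour through a Brezis--Lieb splitting, strict subadditivity and a Pohozaev identity on the half-lines is therefore unnecessary. As written it is also not a proof: the splitting formula for $d$ is never stated precisely, and the virial contradiction is only announced.

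With this one-line identity in place, the rest of your outline is correct. It is the standard argument of Serra--Tentarelli (Proposition 4.4 of \cite{SerraTentarelliJDE2016}), which the paper cites without reproducing. Boundedness follows from \eqref{eq:GN_H1_graph} because $\frac p2-1<2$, and you have compactness on $\K$. The sign of the multiplier comes from the negative level. Finally, $\frac1{2m}\|(f_n-f)'\|_{L^2}^2+\nu\|f_n-f\|_{L^2}^2\to0$. Note also that $f\neq0$ then follows automatically from the strong convergence and need not be proved separately.
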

Combining Lemma \ref{minimax}, Lemma \ref{lem:NLSnegative}, and Lemma \ref{lem:NLSPS}, one obtains the multiplicity of critical points of the functional $\mathcal I_\infty|_{S_\infty}$, namely Theorem 1.2 in \cite{SerraTentarelliJDE2016}.

\section{Mass-Subcritical or Mass-Critical Case}\label{sec:subcritical}

In this section, we mainly prove the multiplicity 
of solutions to \eqref{dirac} for the mass-subcritical or
critical case, namely, $2<p\leq 4$.

For $v\in S_c^+$, we define 
\begin{equation}\label{eq:fiber}
\mathcal M_c(v):=\bigl(Y_c^-\oplus\Span\{v\}\bigr)\cap S_c
=\{t v+w: t\ge0,\ w\in Y_c^-,\ t^2+\|w\|_{L^2}^2=1\}.
\end{equation}
To overcome the difficulty that $\mathcal{I}_c$ is
 unbounded from below on the \(L^2\)-sphere, 
 we first reduce  $\mathcal{I}_c$ to  \(S_c^+\)
  via the following maximization problem
  \[
  \rho(v)=\max_{u\in\mathcal M_c(v)}\mathcal I_c(u).
  \]
   The reduced functional is in one-to-one 
   correspondence with the functional $\mathcal{I}_c$
    regarding their critical points, and,
     in contrast to $\mathcal{I}_c$, the reduced 
     functional is bounded below, 
     making it more tractable from a 
     variational perspective. 
     We summarize the properties of the reduced
      functional as follows.
      We omit the proof here, as it is very similar 
      to that in the supercritical case, 
      where a detailed proof will be provided 
      (see Proposition \ref{prop:localReductionSuper}).
{

\begin{proposition}\label{prop:reduction}
  There exists $c_0>0$ such that, for every $c\ge c_0$ and every
$v\in S_c^+$, there exists
\[
        \Phi_c(v)\in\mathcal M_c(v)
\]
such that
\[
        \mathcal I_c(\Phi_c(v))=\max_{u\in\mathcal M_c(v)}\mathcal I_c(u)=\rho_c(v).
\]
and 
\[
\dd\I_c(\Phi_c(v))[h]-2\omega\Re((\Phi_c(v), h)_{L^2}=0, \quad \forall h\in Y_c^-\oplus\operatorname{span}\{v\}.
\]
with $\omega=\omega(\Phi_c(v))\in \mathbb{R}^+$.
Writing
\[
        \Phi_c(v)=t_c(v)v+w_c(v),
        \qquad t_c(v)\ge0,
        \qquad w_c(v)\in Y_c^- ,
\]
and the reduced functional is defined as
\[
        \mathcal J_c(v):=\mathcal I_c(\Phi_c(v)),
        \qquad v\in S_c^+,
\]
one has $t_c(v)>0$.  Moreover:
\begin{enumerate}[label=\textup{(\roman*)}]
\item the estimate
\begin{equation}\label{eq:fibre-basic-estimate-final-insert}
        \mathcal A(\Phi_c(v))+\|\Phi_c(v)^-\|_c^2
        +\|\Phi_c(v)^-\|_{L^2}^2\|v\|_c^2
        \le \mathcal A(v)
\end{equation}
holds.  In particular,
\begin{equation}\label{eq:negative-part-bound-final-insert}
        \|w_c(v)\|_c^2\le \mathcal A(v)=\frac{2a}{p}\int_K |v|^p\,dx;
\end{equation}
\item Up to a phase factor, the
maximizer is unique;
\item The map $\Phi_c:S_c^+\to S_c$ and the
reduced functional $ \mathcal J_c(v)$
is of class $C^1$;
\item $u_n=\Phi_c(v_n)$ is a bounded Palais-Smale sequence of
 $\mathcal I_c|_{S_c}$ provided $v_n\in S_c^+$ is a bounded Palais-Smale sequence
  of $\mathcal J_c$ on $S_c^+$. Moreover, up to a subsequence, assume $\omega(v_n)\to \omega$,
  then $u_n=\Phi_c(v_n)$ is a bounded Palais-Smale sequence of the functional
  \[
  \I_c^\omega(u):=\I_c(u)-\omega\|u\|_{L^2}^2.
  \]
\end{enumerate}
\end{proposition}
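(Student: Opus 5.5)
The plan is to treat, for fixed $v\in S_c^+$, the fibre $\mathcal M_c(v)$ as the unit sphere of the Hilbert space $Y_c^-\oplus\Span\{v\}$. The first step is existence of a maximizer of $\mathcal I_c$ on it. Write $u=tv+w$ with $t^2+\|w\|_{L^2}^2=1$. Then
\[
\mathcal I_c(u)=t^2\|v\|_c^2-\|w\|_c^2-\mathcal A(tv+w)\le \|v\|_c^2-\|w\|_c^2 ,
\]
so $\mathcal I_c$ is bounded above on the fibre and coercive to $-\infty$ in $\|w\|_c$. Take a maximizing sequence $u_n=t_nv+w_n$; it is bounded in $Y_c$, so up to a subsequence $t_n\to t$ and $w_n\rightharpoonup w$ weakly. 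Passing to the limit uses three facts: weak lower semicontinuity of $\|w\|_c^2$, compactness of $Y_c\hookrightarrow L^p(\K)$ from \eqref{eq:intro-embedding} (so $\mathcal A$ converges), and weak lower semicontinuity of $\|w\|_{L^2}$. The last only gives $t^2+\|w\|_{L^2}^2\le 1$, so a loss of $L^2$ mass must be ruled out. For this I would renormalize by increasing $t$: replace $t$ by $t'=(1-\|w\|_{L^2}^2)^{1/2}\ge t$. This raises $\mathcal I_c$, provided $\|v\|_c^2\ge mc^2$ dominates the growth of $\mathcal A$ along $v$, which holds for $c\ge c_0$ by \eqref{eq:GNDirac} since $p\le4$ makes $c^{-(p-2)/2}\|v\|_c^{p-2}$ subordinate. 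Hence the limit lies on the fibre and is a maximizer $\Phi_c(v)$.

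For $t_c(v)>0$, note that $\mathcal I_c<0$ on $Y_c^-\cap S_c$, while $\mathcal I_c(v)\ge mc^2-\mathcal A(v)>0$ for $c$ large, again by \eqref{eq:GNDirac}. The Lagrange multiplier rule on the smooth fibre gives the displayed weak equation with some $\omega$. Testing it with $h=\Phi_c(v)$ yields $\omega=\mathcal I_c(\Phi_c(v))+(1-\tfrac2p)\mathcal A(\Phi_c(v))\ge \mathcal I_c(v)>0$, so $\omega\in\R^+$.

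Estimate (i) comes from comparing with the competitor $v$ itself. Maximality gives
\[
t^2\|v\|_c^2-\|w\|_c^2-\mathcal A(\Phi_c(v))\ge \|v\|_c^2-\mathcal A(v).
\]
Using $1-t^2=\|w\|_{L^2}^2$ this rearranges to
\[
\mathcal A(\Phi_c(v))+\|w\|_c^2+\|w\|_{L^2}^2\|v\|_c^2\le\mathcal A(v),
\]
which is exactly \eqref{eq:fibre-basic-estimate-final-insert}, and \eqref{eq:negative-part-bound-final-insert} follows.

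Uniqueness (ii) is the hardest step. My plan is to parametrize the fibre by $w\in Y_c^-$ with $\|w\|_{L^2}<1$, setting $t=(1-\|w\|_{L^2}^2)^{1/2}$, and to show that $w\mapsto \mathcal I_c$ is strictly concave on the relevant region. The second variation has three contributions: the quadratic part $-\|h\|_c^2-\|h\|_{L^2}^2\|v\|_c^2$ from the $t^2$ term, plus curvature corrections, against the second derivative of $\mathcal A$. The latter is bounded by
\[
C a\,\|\Phi\|_{L^p(\K)}^{p-2}\|h\|_{L^p}^2\lesssim a\,c^{-1}\mathcal A(v)^{(p-2)/p}\|h\|_c^2 ,
\]
using \eqref{eq:GNDirac} and (i). For $c\ge c_0$ this is small relative to $\|h\|_c^2$, giving strict concavity and hence uniqueness of the maximizer up to the $S^1$ phase. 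Making this uniform is where $p\le4$ is used, since $\mathcal A(v)\lesssim c^{-(p-2)/2}\|v\|_c^{p-2}$ must be absorbed by $\|v\|_c^2$ globally.

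For (iii), the nondegeneracy furnished by this concavity lets the implicit function theorem apply to the fibre equation $P_c^-(\cdots)=0$ together with the constraint, giving $\Phi_c\in C^1$ after fixing the phase. Then $\mathcal J_c$ is $C^1$, with $d\mathcal J_c(v)[z]=d\mathcal I_c(\Phi_c(v))[t_c(v)z]$ for $z\in T_vS_c^+$, by the envelope argument.

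For (iv), this derivative formula, combined with the fibre equation which already kills the $Y_c^-\oplus\Span\{v\}$ directions, shows that $d(\mathcal I_c|_{S_c})(u_n)\to0$ whenever $d(\mathcal J_c|_{S_c^+})(v_n)\to0$. Here we use $t_c(v_n)$ bounded away from $0$, which follows from (i). Boundedness of $u_n$ follows from (i) and boundedness of $v_n$. Finally, if $\omega(v_n)\to\omega$, the same identity shows that $u_n$ is a Palais--Smale sequence for $\mathcal I_c^\omega$.
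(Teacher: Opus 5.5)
Your architecture is the paper's (for this proposition it defers to the proof of Proposition \ref{prop:localReductionSuper}): comparison with $v$ for (i), strict concavity in $w$ for (ii), the implicit function theorem for (iii), and the envelope identity for (iv). The gap is in (ii).

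\textbf{The concavity estimate is not uniform in $v$.} Your bound $a\,c^{-1}\mathcal A(v)^{(p-2)/p}\|h\|_c^2$ is not small uniformly in $v\in S_c^+$. The norm $\|v\|_c$ is unbounded on $S_c^+$, and $\mathcal A(v)$ is only controlled by $Cc^{-(p-2)/2}\|v\|_c^{p-2}$, so for fixed $c$ the coefficient blows up as $\|v\|_c\to\infty$. You sense that this must be absorbed by $\|v\|_c^2$. However, a bad term proportional to $\|h\|_c^2$ alone cannot be absorbed by the good term $-\|v\|_c^2\|h\|_{L^2}^2$. So as written there is no $c_0$ independent of $v$, and (iii), which uses this nondegeneracy, is affected too. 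Two further points: you leave the curvature term unestimated, and $\dd^2\mathcal A$ acts on $\zeta=\dd a(w)[h]v+h$, not on $h$.

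The fix, as in Lemma \ref{lem:local-fibre-concavity-supercritical}, is to use convexity of $z\mapsto|z|^p$. Then $-\dd^2\mathcal A(u)[\zeta,\zeta]\le 0$ has the good sign, and the only bad term is $-\dd\mathcal A(u)\bigl[\dd^2a(w)[h,h]\,v\bigr]$. This is bounded by $C|\dd\mathcal A(u)[v]|\,\|h\|_{L^2}^2\le Cc^{(p-6)/2}\|v\|_c^2\|h\|_{L^2}^2$, using \eqref{eq:GNDirac}, $\|u\|_{L^2}=1$, $\|u\|_c\le 2\|v\|_c$, $\|v\|_c^2\ge mc^2$ and $p\le 4$. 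It is then absorbed by $-2\|v\|_c^2\|h\|_{L^2}^2$ on a convex region such as $\{w:\|w\|_c^2+\|w\|_{L^2}^2\|v\|_c^2\le 2\mathcal A(v)\}$, which contains all maximizers by (i). This is exactly where $p\le4$ replaces the restriction $\|v\|_c<c^s$ of the supercritical case.

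\textbf{The multiplier identity has the wrong sign.} Testing with $h=\Phi_c(v)$ gives $\omega=\mathcal I_c(\Phi_c(v))-\frac{p-2}{2}\mathcal A(\Phi_c(v))$ (compare \eqref{eq:energyOmegaRelation}). Hence $\omega\le \mathcal I_c(\Phi_c(v))$, and $\omega\ge\mathcal I_c(v)$ does not follow; generically it is false. Positivity is recovered from (i): $\omega\ge \|v\|_c^2-\frac p2\mathcal A(v)\ge (1-Cc^{(p-6)/2})\|v\|_c^2>0$. Likewise, replace $\mathcal I_c(v)\ge mc^2-\mathcal A(v)>0$ by the relative bound $\mathcal I_c(v)\ge (1-Cc^{(p-6)/2})\|v\|_c^2$, since $\mathcal A(v)$ may exceed $mc^2$ when $\|v\|_c$ is large.

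\textbf{Existence.} Your direct-method argument is a legitimate alternative to the paper's Ekeland sequence plus Lemma \ref{lem:local-fibre-PS-supercritical}. However, monotonicity of $s\mapsto s^2\|v\|_c^2-\mathcal A(sv+w)$ is only clear for $s$ away from $0$, since at $s=0$ its derivative is $-\dd\mathcal A(w)[v]$. So you should record that $1-t^2\le \mathcal A(v)/\|v\|_c^2\le Cc^{(p-6)/2}$, which follows by comparing the maximizing sequence with $v$.
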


}

\begin{lemma}\label{lem:FWgraph}
Let \(A\subset S_{\infty}\) be  bounded  in \(H^1_{\mathrm K}(\mathcal G)\), where
\[
S_{\infty}
:=
\left\{
f\in H^1_{\mathrm K}(\mathcal G):
\|f\|_{L^2(\mathcal G)}=1
\right\}.
\]
For \(f\in A\), set
\[
\iota(f):=\binom{f}{0},
\qquad
z_c(f):=P_c^+\iota(f),
\qquad
\Theta_c(f):=\frac{z_c(f)}{\|z_c(f)\|_{L^2}}\in S_c^+.
\]
Then, as \(c\to+\infty\),
\[
\mathcal J_c(\Theta_c(f))
\le
mc^2+\I_{\infty}(f)+o_c(1),
\]
uniformly for \(f\in A\).
\end{lemma}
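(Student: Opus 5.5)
Let $v=\Theta_c(f)$ and write $\Phi_c(v)=t v+w$ with $t=t_c(v)>0$, $w=w_c(v)\in Y_c^-$, $t^2+\|w\|_{L^2}^2=1$. The plan is to bound the maximizer's energy directly:
\[
\mathcal J_c(v)=t^2\|v\|_c^2-\|w\|_c^2-\mathcal A(tv+w)\le \|v\|_c^2-mc^2\|w\|_{L^2}^2-\mathcal A(tv+w),
\]
using $t^2=1-\|w\|_{L^2}^2$ and $\|w\|_c^2\ge mc^2\|w\|_{L^2}^2$. Since $\|v\|_c^2\ge mc^2$, this gives $\mathcal J_c(v)\le \|v\|_c^2-\mathcal A(tv+w)$. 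We then show, uniformly on $A$,
\begin{equation*}
\|v\|_c^2\le mc^2+\tfrac1{2m}\|f'\|_{L^2}^2+o_c(1)
\quad\text{and}\quad
\mathcal A(tv+w)\ge \tfrac{2a}{p}\int_\K|f|^p\,dx-o_c(1).
\end{equation*}

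\emph{Kinetic term.} Since $|\mathscr D_c|$ commutes with $P_c^\pm$, we have $\|z_c(f)\|_c^2=(\mathscr D_c z_c,z_c)_{L^2}\le(\mathscr D_c\iota f,\iota f)_{L^2}+\|P_c^-\iota f\|_c^2$. Here $(\mathscr D_c\iota f,\iota f)_{L^2}$ is only a form pairing, equal to $mc^2$ because the upper component is $f$ and the lower one is $0$. This route is awkward, so we instead use the spectral representation. On the first component, $\iota f$ lies in the form domain of $(c^2\mathcal L^{\mathrm K}_\G+m^2c^4)^{1/2}$, and
\[
\|z_c\|_c^2-mc^2\|z_c\|_{L^2}^2\le \|\iota f\|_c^2-mc^2 = \int\big[(c^2\lambda+m^2c^4)^{1/2}-mc^2\big]\,d\mu_f(\lambda)\le \int\frac{\lambda}{2m}\,d\mu_f=\frac1{2m}\|f'\|_{L^2}^2 .
\]
This uses that $\mathscr D_c$ restricted to $Y_c^+$ equals $|\mathscr D_c|$, together with the elementary bound $\sqrt{c^2\lambda+m^2c^4}-mc^2\le\lambda/(2m)$.

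We also need $\|P_c^-\iota f\|_{L^2}\to0$ uniformly. We get it via the explicit form $P_c^\pm=\tfrac12(1\pm\mathscr D_c|\mathscr D_c|^{-1})$. For the upper component this gives $\|P_c^-\iota f\|_{L^2}^2=\tfrac12\int\big(1-mc^2(c^2\lambda+m^2c^4)^{-1/2}\big)\,d\mu_f\le C\int \lambda/(m^2c^2)\,d\mu_f= O(c^{-2})\|f'\|_{L^2}^2$, which is uniform since $A$ is bounded in $H^1$. Then $\|z_c\|_{L^2}^2=1-O(c^{-2})$, so
\[
\|v\|_c^2=\frac{\|z_c\|_c^2}{\|z_c\|_{L^2}^2}\le mc^2+\frac{\|f'\|_{L^2}^2/(2m)}{1-O(c^{-2})}\le mc^2+\frac1{2m}\|f'\|_{L^2}^2+O(c^{-2}),
\]
uniformly on $A$.

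\emph{Nonlinear term (main obstacle).} First, $v\to\iota f$ in $L^2$ with rate $O(c^{-1})$. Moreover, $\|v-\iota f\|_{H^{1/2}}$ is bounded via \eqref{3} and Lemma~\ref{lemma2.1}: $\|v\|_1^2\le C(\|v\|_c^2-mc^2+1)\le C$. Interpolating, $v\to\iota f$ in $L^p(\K)$ uniformly on $A$. Next we must control $w$. By \eqref{eq:negative-part-bound-final-insert}, $\|w\|_c^2\le\mathcal A(v)\le C$, since $\mathcal A(v)$ is bounded by Corollary~\ref{lem:GNDirac} or by the $H^{1/2}$ bound. Hence $\|w\|_{L^2}^2\le C/(mc^2)$ and $t\to1$. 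Corollary~\ref{lem:GNDirac} then gives $\int|w|^p\le C c^{-(p-2)/2}\|w\|_c^{p-2}\|w\|_{L^2}^2\to0$. Therefore $\mathcal A(tv+w)\to\mathcal A(\iota f)=\frac{2a}{p}\int_\K|f|^p$ uniformly. The uniformity across $A$ is the delicate point, and it rests entirely on the $H^1$ bound on $A$ driving the $O(c^{-2})$ rates above.

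Combining the two estimates gives $\mathcal J_c(\Theta_c(f))\le mc^2+\mathcal I_\infty(f)+o_c(1)$ uniformly for $f\in A$.
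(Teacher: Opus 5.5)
Your proposal is correct and has the same skeleton as the paper's proof. Both bound $\mathcal J_c(\Theta_c(f))\le\|\Theta_c(f)\|_c^2-\mathcal A(\Phi_c(\Theta_c(f)))$, expand the kinetic term, and control $w_c$ through \eqref{eq:negative-part-bound-final-insert} and Corollary~\ref{lem:GNDirac}. The two technical steps, however, are done differently.

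For the kinetic term, the paper computes $\|z_c(f)\|_c^2=\frac12\big((B_cf,f)_{L^2}+mc^2\big)$ and $\|z_c(f)\|_{L^2}^2=\frac12\big(1+mc^2(B_c^{-1}f,f)_{L^2}\big)$ separately, with $B_c=(c^2L_\G^{\mathrm K}+m^2c^4)^{1/2}$. It expands each by dominated convergence and then divides. You instead use that $\Lambda_c\ge0$ commutes with $P_c^\pm$, so $T_c(z_c)\le T_c(\iota f)\le\frac1{2m}\|f'\|_{L^2}^2$; this, not $\mathscr D_c=|\mathscr D_c|$ on $Y_c^+$, is what justifies your first inequality. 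You add the bound $\|P_c^-\iota f\|_{L^2}^2\le\|f'\|_{L^2}^2/(4m^2c^2)$ and the identity $\|\Theta_c(f)\|_c^2=mc^2+T_c(z_c)/\|z_c\|_{L^2}^2$.

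For the nonlinear term, the paper proves $z_c(f)\to\iota(f)$ in $H^1$. You combine the $O(c^{-1})$ rate in $L^2$ with a uniform $H^{1/2}$ bound from Lemma~\ref{lemma2.1}, and interpolate via \eqref{eq:new}.

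Your route buys genuine uniformity on sets that are merely bounded in $H^1$, which is what the statement claims. The paper's asymptotic equalities and its $H^1$ convergence of $z_c(f)$ fail to be uniform on bounded families carrying spectral mass of order $c^{-2}$ near $\lambda\sim c^2$. They are uniform on $H^1$-compact sets, which suffices for the application, since the sets $A_j\in\Gamma_{\infty,j}$ are compact. Your one-sided estimates instead carry explicit $O(c^{-2})$ errors depending only on $\sup_{f\in A}\|f'\|_{L^2}$. In exchange, the paper's route gives two-sided expansions of $\|\Theta_c(f)\|_c^2$ and $H^1$ convergence of $\Theta_c(f)$ on compact sets.

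Two small fixes:
\begin{itemize}
\item Corollary~\ref{lem:GNDirac} does not bound $\mathcal A(\Theta_c(f))$. Since $\|\Theta_c(f)\|_c\sim\sqrt m\,c$, its right-hand side is of order $c^{(p-2)/2}$, so you must use the $H^{1/2}$ bound you also mention.
\item That $H^{1/2}$ bound comes from the $c$-independent equivalence $\|\cdot\|_1\simeq\|\cdot\|_{H^{1/2}}$ on $Y_1$, not from \eqref{3}. The latter only gives $\|v\|_{H^{1/2}}^2\le Cc^{-1}\|v\|_c^2=O(c)$.
\end{itemize}
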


\begin{proof}
We write
\[
B_c:=\big(c^2L_{\mathcal G}^{\mathrm K}+m^2c^4\big)^{1/2}.
\]
Since
\[
\mathscr D_0^2
=
\mathcal L_{\mathcal G}^{\mathrm K}
\oplus
\mathcal L_{\mathcal G}^{\mathrm D},
\]
 for every Borel function \(\Psi\),
\[
\Psi(\mathscr D_0^2)\iota(f)
=
\iota\big(\Psi(L)f\big),
\]
whenever the two sides are well-defined. Since
\[
|\mathscr D_c|
=
\big(c^2\mathscr D_0^2+m^2c^4\big)^{1/2},
\]
we have
\[
|\mathscr D_c|^{-1}\iota(f)
=
\iota(B_c^{-1}f).
\]
Moreover,
\[
P_c^+
=
\frac12\left(I+\mathscr D_c|\mathscr D_c|^{-1}\right).
\]
Thus
\begin{equation}\label{eq:36}
z_c(f)
=
P_c^+\iota(f)
=
\frac12
\binom{
f+mc^2B_c^{-1}f
}{
-ic\,(B_c^{-1}f)'
}.
\end{equation}

Let \(E (\lambda)\) be the spectral resolution of \(L_{\mathcal G}^{\mathrm K}\), and define
\[
d\mu_f(\lambda)
:=
d\|E (\lambda)f\|_{L^2(\mathcal G)}^2.
\]
Since \(f\in H^1_{\mathrm K}(\mathcal G)\), one has
\[
\int_{[0,\infty)}\lambda\,d\mu_f(\lambda)
=
\|f'\|_{L^2(\mathcal G)}^2,
\]
moreover, there holds
\[
(B_cf,f)_{L^2}
=
\int_{[0,\infty)}
\big(c^2\lambda+m^2c^4\big)^{1/2}\,d\mu_f(\lambda),
\]
and
\[
\big(c^2\lambda+m^2c^4\big)^{1/2}-mc^2
=
\frac{\lambda}{\sqrt{m^2+\lambda/c^2}+m}
\longrightarrow
\frac{\lambda}{2m},
\]
\[
0
\le
\frac{\lambda}{\sqrt{m^2+\lambda/c^2}+m}
\le
\frac{\lambda}{2m},
\]
the dominated convergence theorem gives
\begin{equation}\label{eq:37}
(B_cf,f)_{L^2}
=
mc^2
+
\frac{1}{2m}\int_{\mathcal G}|f'|^2\,dx
+
o_c(1).
\end{equation}
Similarly,
\[
mc^2(B_c^{-1}f,f)_{L^2}
=
\int_{[0,\infty)}
\frac{m}{\sqrt{m^2+\lambda/c^2}}\,d\mu_f(\lambda).
\]
Since
\[
c^2
\left(
1-\frac{m}{\sqrt{m^2+\lambda/c^2}}
\right)
=
\frac{\lambda}{
\sqrt{m^2+\lambda/c^2}\,
\big(\sqrt{m^2+\lambda/c^2}+m\big)
}
\longrightarrow
\frac{\lambda}{2m^2},
\]
we obtain
\begin{equation}\label{eq:38}
mc^2(B_c^{-1}f,f)_{L^2}
=
1
-
\frac{1}{2m^2c^2}
\int_{\mathcal G}|f'|^2\,dx
+
o_c(c^{-2}).
\end{equation}

Using \(z_c(f)=P_c^+\iota(f)\) and the fact that \(P_c^+\) is an orthogonal projection, we get
\[
\|z_c(f)\|_{L^2}^2
=
\big(P_c^+\iota(f),\iota(f)\big)_{L^2}
=
\frac12
\left(
1+mc^2(B_c^{-1}f,f)_{L^2}
\right).
\]
Hence, by \eqref{eq:38},
\begin{equation}\label{eq:39}
\|z_c(f)\|_{L^2}^2
=
1
-
\frac{1}{4m^2c^2}
\int_{\mathcal G}|f'|^2\,dx
+
o_c(c^{-2}).
\end{equation}
Next, using again the functional calculus,
\[
\begin{aligned}
\|z_c(f)\|_c^2
&=
\big(|\mathscr D_c|z_c(f),z_c(f)\big)_{L^2}  \\
&=
\big(|\mathscr D_c|P_c^+\iota(f),\iota(f)\big)_{L^2} \\
&=
\frac12
\big(
(|\mathscr D_c|\iota(f),\iota(f))_{L^2}
+
(\mathscr D_c\iota(f),\iota(f))_{L^2}
\big).
\end{aligned}
\]
On the upper component,
\[
|\mathscr D_c|\iota(f)=\iota(B_cf),
\]
while
\[
(\mathscr D_c\iota(f),\iota(f))_{L^2}
=
mc^2\|f\|_{L^2}^2
=
mc^2.
\]
Therefore
\[
\|z_c(f)\|_c^2
=
\frac12
\left(
(B_cf,f)_{L^2}+mc^2
\right).
\]
By \eqref{eq:37},
\begin{equation}\label{eq:310}
\|z_c(f)\|_c^2
=
mc^2
+
\frac{1}{4m}
\int_{\mathcal G}|f'|^2\,dx
+
o_c(1).
\end{equation}
Combining \eqref{eq:39} and \eqref{eq:310}, we obtain
\begin{equation}\label{eq:311}
\|\Theta_c(f)\|_c^2
=
\frac{\|z_c(f)\|_c^2}{\|z_c(f)\|_{L^2}^2}
=
mc^2
+
\frac{1}{2m}
\int_{\mathcal G}|f'|^2\,dx
+
o_c(1).
\end{equation}

We also need the convergence of the nonlinear term. From \eqref{eq:36}, the same spectral estimates imply
\[
z_c(f)\longrightarrow \iota(f)
\quad
\text{in }H^1(\mathcal G,\mathbb C^2),
\]
uniformly for \(f\in A\). Indeed, the upper component is controlled by
\[
\int_{[0,\infty)}
(1+\lambda)
\left|
\frac{m}{\sqrt{m^2+\lambda/c^2}}-1
\right|^2
\,d\mu_f(\lambda),
\]
which tends to zero uniformly on \(A\). For the lower component one uses
\[
\frac{c^2}{4}
\int_{[0,\infty)}
\frac{\lambda}{c^2\lambda+m^2c^4}\,d\mu_f(\lambda)
=
\frac14
\int_{[0,\infty)}
\frac{\lambda}{\lambda+m^2c^2}\,d\mu_f(\lambda),
\]
and
\[
\frac{c^2}{4}
\int_{[0,\infty)}
\frac{\lambda^2}{c^2\lambda+m^2c^4}\,d\mu_f(\lambda)
=
\frac14
\int_{[0,\infty)}
\frac{\lambda^2}{\lambda+m^2c^2}\,d\mu_f(\lambda),
\]
which also tend to zero uniformly for \(f\in A\). Since \(\|z_c(f)\|_{L^2}\to1\) uniformly, we get
\[
\Theta_c(f)\longrightarrow \iota(f)
\quad
\text{in }H^1(\mathcal G,\mathbb C^2)
\]
uniformly for \(f\in A\). Hence, by the Sobolev embedding on the compact core,
\begin{equation}\label{eq:312}
\int_{\mathcal K}|\Theta_c(f)|^p\,dx
=
\int_{\mathcal K}|f|^p\,dx
+
o_c(1)
\end{equation}
uniformly for \(f\in A\).

Now let
\[
u_c=\Phi_c(\Theta_c(f))
=
t_c\Theta_c(f)+w_c,
\qquad
t_c\ge0,\quad w_c\in Y_c^-.
\]
By Proposition \(3.1\), applied with \(v=\Theta_c(f)\),
\[
\mathcal A(u_c)+\|w_c\|_c^2+\|w_c\|_{L^2}^2\|\Theta_c(f)\|_c^2
\le
\mathcal A(\Theta_c(f)),
\]
By \eqref{eq:312}, \(A(\Theta_c(f))\) is bounded uniformly for \(f\in A\). Therefore
\[
\|w_c\|_c\le C,
\qquad
\|w_c\|_{L^2}^2\le \frac{C}{mc^2}=o_c(1),
\]
and hence
\[
t_c^2=1-\|w_c\|_{L^2}^2=1+o_c(1).
\]
Moreover, the fractional Gagliardo--Nirenberg estimate on \(Y_c\) yields
\[
\|w_c\|_{L^p(\mathcal K)}^p
\le
\|w_c\|_{L^p(\mathcal G)}^p
\le
C c^{-\frac{p-2}{2}}\|w_c\|_c^{p-2}\|w_c\|_{L^2}^2
=
o_c(1).
\]
Consequently,
\[
u_c=t_c\Theta_c(f)+w_c
\longrightarrow
\iota(f)
\quad
\text{in }L^p(\mathcal K,\mathbb C^2),
\]
uniformly for \(f\in A\). Therefore
\begin{equation}\label{eq:313}
\int_{\mathcal K}|u_c|^p\,dx
=
\int_{\mathcal K}|f|^p\,dx
+
o_c(1).
\end{equation}

Finally, for \(u_c=\Phi_c(\Theta_c(f))=t_c\Theta_c(f)+w_c\),
we have
\[
\begin{aligned}
\mathcal J_c(\Theta_c(f))
&=
t_c^2\|\Theta_c(f)\|_c^2
-
\|w_c\|_c^2
-
\frac{2a}{p}\int_{\mathcal K}|u_c|^p\,dx  \\
&\le
\|\Theta_c(f)\|_c^2
-
\frac{2a}{p}\int_{\mathcal K}|u_c|^p\,dx.
\end{aligned}
\]
By \eqref{eq:311} and \eqref{eq:313},
\[
\mathcal J_c(\Theta_c(f))
\le
mc^2
+
\frac{1}{2m}\int_{\mathcal G}|f'|^2\,dx
-
\frac{2a}{p}\int_{\mathcal K}|f|^p\,dx
+
o_c(1).
\]
That is,
\[
\mathcal J_c(\Theta_c(f))
\le
mc^2+\I_{\infty}(f)+o_c(1),
\]
uniformly for \(f\in A\). The proof is complete.
\end{proof}

\begin{lemma}\label{lem:PSbelow}
For $c$ sufficiently large, the reduced functional $\mathcal J_c$ satisfies the Palais--Smale condition at every level
\begin{equation}\label{eq:PSbelowThreshold}
d<mc^2.
\end{equation}

\end{lemma}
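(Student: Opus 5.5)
We take a Palais--Smale sequence $(v_n)\subset S_c^+$ for $\mathcal J_c$ at a level $d<mc^2$ and show that, up to a subsequence, it converges in $Y_c$. By Proposition~\ref{prop:reduction}(iv) the problem transfers to $u_n:=\Phi_c(v_n)=t_nv_n+w_n$, which is then a Palais--Smale sequence for $\mathcal I_c|_{S_c}$ at level $d$. The argument has three steps: uniform bounds, extraction of a limit using the gap between $\omega$ and $mc^2$, and a final argument showing the limit keeps unit mass.

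\textbf{Step 1: bounds.} First, $(v_n)$ is bounded in $Y_c$. Estimate \eqref{eq:fibre-basic-estimate-final-insert} and $\mathcal J_c(v)\ge\mathcal I_c(v)$ give
\[
d+o(1)=\mathcal J_c(v_n)\ge \|v_n\|_c^2-\mathcal A(v_n).
\]
Corollary~\ref{lem:GNDirac} gives $\mathcal A(v_n)\le C a\,c^{-\frac{p-2}{2}}\|v_n\|_c^{p-2}$.
\begin{itemize}
\item For $2<p<4$ we have $p-2<2$, so $\|v_n\|_c$ is bounded.
\item For $p=4$ we get $(1-Ca/c)\|v_n\|_c^2\le d+o(1)$, which is a bound once $c$ is large. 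This is where ``$c$ sufficiently large'' enters.
\end{itemize}
Then \eqref{eq:negative-part-bound-final-insert} bounds $w_n$, so $(u_n)$ is bounded in $Y_c$. Passing to a subsequence, we have $u_n\rightharpoonup u$ in $Y_c$, $u_n\to u$ in $L^r(\mathcal K)$ by the compact embedding \eqref{eq:intro-embedding}, and $\omega_n:=\omega(v_n)\to\omega$.

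Next we identify $\omega$. Testing \eqref{eq:weakDirac} (up to $o(1)$) with $\varphi=u_n$ gives
\[
\omega+o(1)=\|u_n^+\|_c^2-\|u_n^-\|_c^2-a\int_\K|u_n|^p \le d-\Big(1-\frac{2}{p}\Big)a\int_\K |u_n|^p+o(1),
\]
since $a\int_\K|u_n|^p\ge\frac{2a}{p}\int_\K|u_n|^p$. Hence $\omega\le d<mc^2$. We also have $\omega\ge0$ from Proposition~\ref{prop:reduction}.

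\textbf{Step 2: strong convergence.} The weak limit $u$ solves $\mathscr D_cu-\omega u=a\chi_\K|u|^{p-2}u$. Set $r_n=u_n-u$. Testing the difference of the equations with $r_n^+-r_n^-$, the nonlinear term vanishes in the limit because of strong $L^p(\mathcal K)$ convergence. This yields
\[
\|r_n\|_c^2-\omega\big(\|r_n^+\|_{L^2}^2-\|r_n^-\|_{L^2}^2\big)=o(1).
\]
Since $\|r_n^\pm\|_c^2\ge mc^2\|r_n^\pm\|_{L^2}^2$ and $\omega<mc^2$, we get
\[
\|r_n^+\|_c^2-\omega\|r_n^+\|_{L^2}^2\ge\Big(1-\frac{\omega}{mc^2}\Big)\|r_n^+\|_c^2,
\]
while the negative-part contribution is $\ge\|r_n^-\|_c^2$. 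Therefore $r_n\to0$ in $Y_c$.

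\textbf{Step 3: the constraint is preserved (main obstacle).} Step 2 already needs the strict gap $\omega<mc^2$ from Step 1. The remaining difficulty is to exclude mass escaping to the half-lines, that is, to ensure $\|u\|_{L^2}=1$. The gap in Step 2 handles this: the same identity with the constraint multiplier term included is exactly what forces $r_n\to0$ in $Y_c$, hence in $L^2$, so $\|u\|_{L^2}=1$. To check that the multiplier does not spoil this when $\omega$ is close to $0$, I would use $\omega\ge0$ together with the sign structure of the identity. If there is trouble, I would first try a Brezis--Lieb splitting on the unbounded edges. There the equation is linear with $|\omega|<mc^2$, which lies in the spectral gap, so the dichotomy part carries zero energy-minus-$\omega$-mass, contradicting the splitting of the level.

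Once $u_n\to u$ in $Y_c$, the continuity of $\Phi_c^{-1}$ (the normalized $P_c^+$-projection, $v=u^+/\|u^+\|_{L^2}$) gives $v_n\to v$ in $S_c^+$. This proves the Palais--Smale condition for $\mathcal J_c$ at every level $d<mc^2$.
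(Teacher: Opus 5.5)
Your proposal is correct and follows essentially the same route as the paper. You bound $(v_n)$ via $\mathcal J_c(v_n)\ge\mathcal I_c(v_n)$ and Corollary~\ref{lem:GNDirac}, get $\omega=\lim\omega_n\le d<mc^2$ from the identity \eqref{eq:energyOmegaRelation}, and obtain strong convergence by testing the Palais--Smale relation for $\mathcal I_c^\omega$ against the positive and negative parts of $u_n-u$; you combine these into the single test function $r_n^+-r_n^-$, while the paper tests them separately. Your Step~3 needs no further work, and the Brezis--Lieb fallback is superfluous, because strong convergence in $Y_c$ already gives $\|u\|_{L^2}=1$; your final passage back to $v_n=u_n^+/\|u_n^+\|_{L^2}$ is a detail the paper leaves implicit, and it only needs $\lim t_n>0$, which follows from \eqref{eq:fibre-basic-estimate-final-insert} and $\|v_n\|_c^2\ge mc^2$.
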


\begin{proof}
Let $v_n\subset S_c^+$ be such that
\[
\mathcal J_c(v_n)\to d<mc^2,
\qquad
\|d\mathcal J_c(v_n)\|\to0.
\]
Set $u_n=\Phi_c(v_n)=t_nv_n+w_n$. Since $v_n\in\mathcal M_c(v_n)$,
\begin{equation}\label{eq:coercivityReduced}
\mathcal J_c(v_n)\ge\mathcal I_c(v_n)
=\|v_n\|_c^2-\frac{2a}{p}\int_\K |v_n|^p\dd x.
\end{equation}
By Corollary \ref{lem:GNDirac} 
\[
\mathcal I_c(v_n)
\ge \|v_n\|_c^2-Cc^{-\frac{p-2}{2}}\|v_n\|_c^{p-2},
\]
which yields $(v_n)$ is bounded in $Y_c$. 
Proposition \ref{prop:reduction} shows the boundedness of $(w_n)$ in $Y_c^-$, hence $(u_n)$ is bounded in $Y_c$.
Moreover, one readily checks that
\begin{equation}\label{eq:omegaFormula}
\omega_n:=\omega(v_n)=(u_n^+,u_n^+)_c-(u_n^-,u_n^-)_c-a\int_\K |u_n|^p\dd x,
\end{equation}
and
\begin{equation}\label{eq:energyOmegaRelation}
\mathcal I_c(u_n)-\omega_n
=\frac{a(p-2)}{p}\int_\K |u_n|^p\dd x,
\end{equation}
which yields
\begin{equation}\label{eq:omegaUpper}
\limsup_{n\to\infty}\omega_n\le d<mc^2.
\end{equation}
It follows from Proposition \ref{prop:reduction}, up to a subsequence, $\omega_n\to\omega\in[0,mc^2)$, 
and $(u_n)$ is a bounded Palais-Smale sequence of $\I_c^\omega$.
We may assume $u_n\rightharpoonup u$ in $Y_c$ and $u_n\to u$ in $L^p(\K,\C^2)$, then
$$ \begin{aligned} &\omega\left\|u_{n}^{+}-u^{+}\right\|_{L^{2}}^{2}
   =\frac{1}{2}\left(\dd \mathcal{I}_{c} \left(u_{n}\right)
   - \dd \mathcal{I}_{c}(u)\right) \left[u_{n}^{+}-u^{+}
   \right]+o_{n}(1) \\ &=\left\|u_{n}^{+}-u^{+}\right\|_{c}^{2}
    -\Re \int_{\K} \left(\left|u_{n}\right|^{p-2}
     u_{n}-|u|^{p-2} u\right) \\ &\quad 
     \cdot\left(u_{n}^{+}-u^{+}\right) d x+o_{n}(1) 
     \\ &=\left\|u_{n}^{+}-u^{+}\right\|_{c}^{2}+o_{n}(1).
     \end{aligned} $$
Combining this with the fact that $\omega<mc^2$, we get
\[
\left\|u_{n}^{+}-u^{+}\right\|_{c}=o_n(1),
\]
similarly,
\[
\left\|u_{n}^{-}-u^{-}\right\|_{c}=o_n(1),
\]
which implies that
\[
\|u_n-u\|_c=o_n(1)
\]
and so the proof of this lemma is completed.

\end{proof}

\section{The Mass-Supercritical Case}\label{sec:supercritical}

Throughout this section we assume
\begin{equation}\label{eq:superRange}
4<p< 6.
\end{equation}
Choose once and for all a number $s$ such that
\begin{equation}\label{eq:sChoiceGraph}
1<s<\frac{p-2}{2(p-4)}.
\end{equation}
For $u\in Y_c$ set
\begin{equation}\label{eq:TcExcess}
T_c(u):=\|u\|_c^2-mc^2\|u\|_{L^2}^2\ge0.
\end{equation}
The local set on which the reduced functional will be used is
\begin{equation}\label{eq:OcGraph}
\mathcal O_c:=\{u\in Y_c:\|u\|_{L^2}\le1,\ \|u\|_c<c^s\},
\qquad
\mathcal O_c^+:=\mathcal O_c\cap S_c^+.
\end{equation}
Since $s>1$, every nonrelativistic test state constructed in Lemma \ref{lem:FWgraph} belongs to $\mathcal O_c^+$ for all large $c$.

{

\begin{lemma}\label{lem:thresholdFinite}
Let \(\mathcal G\) be a connected noncompact metric graph with finitely
many edges and vertices, and let \(\K\) be its compact core. For \(c>0\), set
\[
\mathcal{N}_c:=\ker(|\mathscr D_c|-mc^2)
=
\ker(\mathscr D_c-mc^2)\oplus\ker(\mathscr D_c+mc^2).
\]
Then
\[
\dim_{\mathbb C}\ker(\mathscr D_c-mc^2)=0
\]
and
\[
\dim_{\mathbb C}\ker(\mathscr D_c+mc^2)=b_1(K),
\]
where \(b_1(K)\) is the first Betti number of the compact core. Hence
\[
\dim_{\mathbb C}\mathcal{N}_c=b_1(K),
\]
which is independent of \(c\). In particular, \(\dim_{\mathbb C}N_c\)
is uniformly bounded for \(c>1\).

Moreover, for every \(2\le q<\infty\), there exists a constant
\(C_q>0\), depending only on \(q\) and on the metric graph, but not on
\(c\), such that
\begin{equation}\label{as}
       \|\Pi_c u\|_{L^q(K,\mathbb C^2)}^q
\le
C_q\|\Pi_c u\|_{L^2(\mathcal G,\mathbb C^2)}^q,
\qquad
u\in L^2(\mathcal G,\mathbb C^2), 
\end{equation}
where \(\Pi_c\) is the \(L^2\)-orthogonal projection onto \(\mathcal{N}_c\).
\end{lemma}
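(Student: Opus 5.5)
The plan is to compute the two threshold kernels directly from the block structure $\mathscr D_c=c\mathscr D_0+mc^2\sigma_3$ with $\mathscr D_0=\begin{pmatrix}0&iT^*\\-iT&0\end{pmatrix}$. We then observe that $\mathcal N_c$ is a fixed finite-dimensional space of functions supported on $\mathcal K$. On such a space \eqref{as} is just equivalence of norms.

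\emph{Kernel of $\mathscr D_c-mc^2$.} Let $u=(f,g)\in\operatorname{dom}(\mathscr D_0)=H^1_{\mathrm K}(\mathcal G)\oplus H^1_{\mathrm D}(\mathcal G)$ with $\mathscr D_cu=mc^2u$. The first row gives $ic\,T^*g=0$, so $T^*g=0$. The second row gives $-ic\,Tf-mc^2g=mc^2g$, so $g=-\frac{i}{2mc}Tf$. Substituting, $T^*Tf=0$, and pairing with $f$ yields $\|f'\|_{L^2}^2=0$. Hence $f$ is constant on each edge, and by the continuity built into $H^1_{\mathrm K}$ and connectedness of $\mathcal G$ it is globally constant. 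Since $\mathcal G$ has at least one half-line and $f\in L^2$, we get $f=0$, and then $g=-\frac{i}{2mc}f'=0$. So $\ker(\mathscr D_c-mc^2)=\{0\}$.

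\emph{Kernel of $\mathscr D_c+mc^2$.} Now $\mathscr D_cu=-mc^2u$ gives $f=-\frac{i}{2mc}T^*g$ from the first row and $Tf=0$ from the second. As before, $Tf=0$ with $f\in H^1_{\mathrm K}\cap L^2$ forces $f=0$, and then $T^*g=0$. Conversely, every $g\in\ker T^*$ gives $(0,g)$ in the kernel. Thus $\ker(\mathscr D_c+mc^2)=\{0\}\oplus\ker T^*$, independently of $c$.

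It remains to identify $\ker T^*=\ker\mathcal L^{\mathrm D}_{\mathcal G}$, which is also recorded in Appendix~\ref{app:graph-operators}. An element $g$ has $g'=0$ edgewise, so it is constant on each edge. It vanishes on the half-lines because it lies in $L^2$. On the bounded edges it satisfies $\sum_{e\succ v}g_e(v)^{\pm}=0$ at each $v\in V_{\mathcal K}$, which is a vanishing signed sum of edge values at every vertex. Hence $\ker T^*$ is the space of complex flows (cycles) on the finite graph $\mathcal K$, i.e.\ the kernel of the incidence map $\mathbb C^{E_{\mathcal K}}\to\mathbb C^{V_{\mathcal K}}$. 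Its dimension is $|E_{\mathcal K}|-|V_{\mathcal K}|+1=b_1(\mathcal K)$, using that $\mathcal K$ is connected. Loops need some care with the counting convention, but each loop contributes one independent cycle and is consistent with the formula. This bookkeeping is the only point needing care; the rest is algebra.

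\emph{The $L^q$ estimate.} Since $\mathcal N_c=\{0\}\oplus\ker T^*=:\mathcal N$ does not depend on $c$, the projection $\Pi_c$ is the same fixed projection $\Pi$ for all $c$. Every element of $\mathcal N$ is piecewise constant and supported in $\mathcal K$. On the finite-dimensional space $\mathcal N$ the norms $\|\cdot\|_{L^\infty(\mathcal K)}$ and $\|\cdot\|_{L^2(\mathcal G)}$ are equivalent, with a constant $C$ depending only on the graph. Therefore
\[
\|\Pi u\|_{L^q(\mathcal K)}^q\le|\mathcal K|\,\|\Pi u\|_{L^\infty}^q\le|\mathcal K|\,C^q\|\Pi u\|_{L^2(\mathcal G)}^q ,
\]
which is \eqref{as} with $C_q=|\mathcal K|C^q$, independent of $c$.
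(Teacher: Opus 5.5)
Your proof is correct. It follows the paper's overall plan: show $\ker(\mathscr D_c-mc^2)=\{0\}$, identify $\ker(\mathscr D_c+mc^2)$ with the kernel of the oriented incidence matrix of $\mathcal K$, then use equivalence of norms on a finite-dimensional space. The mechanism for computing the kernels is different, though.

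\textbf{How the two routes differ.} The paper solves the edgewise ODEs explicitly, obtaining affine functions on the bounded edges. It then encodes the vertex conditions through the vertex values $F_v$. For the positive threshold it derives the discrete Laplace equation $\sum_{e\succ v}(F_v-F_{v_e})/\ell_e=0$ and kills it by a discrete summation by parts, together with $F=0$ on $V_\infty$. You instead read the eigenvalue equations off the block form $\mathscr D_0=\begin{pmatrix}0&iT^*\\-iT&0\end{pmatrix}$. Everything then reduces to two facts: $\ker T=\{0\}$, which the Appendix already records, and the description of $\ker T^*$. The identity $(T^*Tf,f)_{L^2}=\|Tf\|_{L^2}^2$ you use is the continuous counterpart of the paper's discrete energy identity. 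Your route avoids all orientation bookkeeping and makes the role of $c$ transparent.

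\textbf{Advantage of your route.} You state explicitly that $\mathcal N_c=\{0\}\oplus\ker T^*$, which does not depend on $c$, so $\Pi_c$ is one fixed projection. This is exactly what makes the constant in \eqref{as} independent of $c$. The paper only records that $\dim\mathcal N_c$ is independent of $c$, which by itself would not give a uniform constant. Its explicit computation does produce $c$-independent kernel elements $(0,F_e)$, but it never says so.

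\textbf{One line to add.} Your assertion that $\mathcal K$ is connected is correct, and it is why you may use $|E_{\mathcal K}|-|V_{\mathcal K}|+1$ in place of the paper's $|E_{\mathcal K}|-|V_{\mathcal K}|+\kappa(\mathcal K)$. Give the one-line reason: a half-line has a single vertex endpoint, so any path in $\mathcal G$ between vertices can be rerouted through bounded edges only.
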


\begin{proof}
Let $E_\K$ be the set of all bounded edges, and $V_\K$ the set of all vertices in $\K$.
 \(e\in E_\K\) is identified
with an interval \([0,\ell_e]\) and is given an arbitrary orientation.
We denote its initial and terminal vertices by \(e_-\) and \(e_+\),
respectively. If \(e\succ v\) and \(e\) is not a
loop, \(v_e\) denotes the other endpoint of \(e\). If \(e\) is a loop
based at \(v\), we set \(v_e=v\). Finally, let
\[
V_\infty:=
\{v\in V_\K:\ v \text{ is incident with at least one half-line}\}.
\]
Since \(\mathcal G\) is connected and noncompact, every connected
component of \(\K\) meets \(V_\infty\).

We begin with \(\ker(\mathscr D_c-mc^2)\). Let
\[
\psi=(\psi^1,\psi^2)^T\in\ker(\mathscr D_c-mc^2).
\]
On each edge,
\[
-ic(\psi^2)' + mc^2\psi^1 = mc^2\psi^1,
\qquad
-ic(\psi^1)' - mc^2\psi^2 = mc^2\psi^2.
\]
Hence
\[
(\psi^2)'=0,
\qquad
(\psi^1)'=2imc\,\psi^2.
\]
Therefore, on every bounded edge \(e\in E_\K\),
\[
\psi_e^2(x)=A_e,
\qquad
\psi_e^1(x)=2imc\,A_ex+B_e,
\]
for some \(A_e,B_e\in\mathbb C\). On every half-line, \(\psi\in L^2(\mathcal G,\mathbb C^2)\) force
\(A_e=B_e=0\).

The continuity condition for the first spinorial component gives a
well-defined complex number
\[
F_v:=\psi^1(v),\qquad v\in V_\K.
\]
If \(e\in E_\K\) is oriented from \(e_-\) to \(e_+\), then
\[
F_{e_-}=B_e,
\qquad
F_{e_+}=2imc\,A_e\ell_e+B_e.
\]
Consequently,
\[
A_e=\frac{F_{e_+}-F_{e_-}}{2imc\,\ell_e}.
\]
Moreover, if \(v\in V_\infty\), 
\(
F_v=0.
\)
We now rewrite the balance condition for the second component. At a
vertex \(v\in V_\K\), the Kirchhoff-type condition reads
\[
\sum_{e\succ v}\psi_e^2(v)^\pm=0.
\]
With the above orientation convention,
\[
\psi_e^2(v)^\pm
=
\begin{cases}
 A_e, & v=e_-,\\
-A_e, & v=e_+.
\end{cases}
\]
Using the expression for \(A_e\), one obtains in both cases
\[
\psi_e^2(v)^\pm
=
\frac{F_{v_e}-F_v}{2imc\,\ell_e}.
\]
Indeed, if \(v=e_-\), then \(v_e=e_+\) and
\[
\psi_e^2(v)^\pm=A_e
=
\frac{F_{e_+}-F_v}{2imc\,\ell_e}
=
\frac{F_{v_e}-F_v}{2imc\,\ell_e};
\]
if \(v=e_+\), then \(v_e=e_-\) and
\[
\psi_e^2(v)^\pm=-A_e
=
-\frac{F_v-F_{e_-}}{2imc\,\ell_e}
=
\frac{F_{v_e}-F_v}{2imc\,\ell_e}.
\]
Hence the Kirchhoff condition yields
\[
\sum_{e\succ v}\frac{F_v-F_{v_e}}{\ell_e}=0,
\qquad v\in V_\K.
\]
Multiplying this identity by \(\overline{F_v}\), summing over
\(v\in V_\K\), and using the usual discrete integration by parts, we get
\[
0
=
\sum_{v\in V_\K}
\sum_{e\succ v}
\frac{F_v-F_{v_e}}{\ell_e}\overline{F_v}
=
\sum_{e\in E_\K}
\frac{|F_{e_+}-F_{e_-}|^2}{\ell_e}.
\]
Therefore \(F\) is constant on each connected component of \(K\). Since
each connected component of \(K\) meets \(V_\infty\), and \(F_v=0\) on
\(V_\infty\), we obtain
\[
F_v=0,\qquad v\in V_\K.
\]
It follows that \(B_e=0\) and \(A_e=0\) for every bounded edge \(e\).
Thus
\[
\ker(\mathscr D_c-mc^2)=\{0\}.
\]

We next compute \(\ker(\mathscr D_c+mc^2)\). Let
\[
\psi=(\psi^1,\psi^2)^T\in\ker(\mathscr D_c+mc^2).
\]
On each edge,
\[
-ic(\psi^2)' + mc^2\psi^1 = -mc^2\psi^1,
\qquad
-ic(\psi^1)' - mc^2\psi^2 = -mc^2\psi^2.
\]
Hence
\[
(\psi^1)'=0,
\qquad
(\psi^2)'=-2imc\,\psi^1.
\]
Thus, on every bounded edge \(e\in E_\K\),
\[
\psi_e^1(x)=E_e,
\qquad
\psi_e^2(x)=-2imc\,E_ex+F_e,
\]
for some \(E_e,F_e\in\mathbb C\). On every half-line,
\(L^2\)-integrability again forces both constants to vanish.

The continuity condition for the first component gives numbers
\(G_v\in\mathbb C\), \(v\in V_\K\), such that
\[
E_e=G_{e_-}=G_{e_+}.
\]
Moreover, if \(v\in V_\infty\), \(G_v=0\). Therefore \(G\) is constant on each
connected component of \(\K\) and vanishes on that component. Hence
\[
E_e=0,\qquad e\in E_\K.
\]
Consequently, every element of \(\ker(\mathscr D_c+mc^2)\) has the form
\[
\psi_e(x)=
\begin{pmatrix}
0\\
F_e
\end{pmatrix}
\quad\text{on each bounded edge }e\in E_\K,
\qquad
\psi_e\equiv0
\quad\text{on each half-line}.
\]

The only remaining condition is the balance condition for the second
component:
\[
\sum_{e\succ v}\psi_e^2(v)^\pm=0,
\qquad v\in V_\K.
\]
Equivalently,
\[
\sum_{e:e_-=v}F_e-\sum_{e:e_+=v}F_e=0,
\qquad v\in V_\K.
\]
Let \(B\) be the oriented incidence matrix of the compact graph \(\K\),
defined by
\[
B_{ve}
=
\begin{cases}
1, & v=e_-,\\
-1, & v=e_+,\\
0, & \text{otherwise},
\end{cases}
\]
with the two contributions cancelling if \(e\) is a loop. Then the
preceding condition is exactly
\[
BF=0,
\qquad
F=(F_e)_{e\in E_\K}.
\]
Therefore
\[
\ker(\mathscr D_c+mc^2)\simeq\ker B.
\]

Let \(\kappa(\K)\) be the number of connected components of \(\K\). The
rank of the incidence matrix of a finite graph is
\[
\operatorname{rank}B=|V_\K|-\kappa(\K).
\]
Indeed, on each connected component the sum of the rows of the incidence
matrix is zero, so the rank is at most the number of vertices in that
component minus one. Conversely, the columns corresponding to any
spanning tree of that component have rank exactly the number of vertices
minus one. Summing over all connected components gives the formula.
Hence
\[
\dim_{\mathbb C}\ker B
=
|E_\K|-|V_\K|+\kappa(K)
=
b_1(K).
\]
Thus
\[
\dim_{\mathbb C}\ker(\mathscr D_c+mc^2)=b_1(K).
\]
Combining this with \(\ker(\mathscr D_c-mc^2)=\{0\}\), we obtain
\[
\dim_{\mathbb C}\mathcal{N}_c=b_1(K).
\]
In particular, this dimension is independent of \(c\).
Since all norms on a finite dimensional space are equivalent, this proves \eqref{as}.
\end{proof}

}

\begin{lemma}\label{lem:thresholdMGN}
Let
\[
 \Lambda_c:=|\mathscr{D}_c|-mc^2,
\qquad
T_c(u):=\| \Lambda_c^{1/2}u\|_{L^2}^2=\|u\|_c^2-mc^2\|u\|_{L^2}^2,
\]
and let $\Pi_c $ be the threshold projection of Lemma \ref{lem:thresholdFinite}. Then there exist constants $C>0$ and $c_0>0$, independent of $c\ge c_0$, such that every $u\in Y_c$ satisfies
\begin{equation}\label{eq:modifiedGraphGN}
\int_\G |u|^p\dd x
\le C\left(
\|\Pi_c u\|_{L^2}^p
+\|u_c^\perp\|_{L^2}^{\frac{p+2}{2}}T_c(u)^\theta
+c^{-\frac{p-2}{2}}\|u_c^\perp\|_{L^2}^2T_c(u)^{2\theta}
\right),
\end{equation}
where $\theta=(p-2)/4$ and
\[
 u_c^\perp:=(I-\Pi_c )u.
\]

\end{lemma}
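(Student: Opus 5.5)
The plan is to split $u=\Pi_c u+u_c^\perp$ and estimate the two pieces separately, using
\[
\int_\G|u|^p\dd x\le 2^{p-1}\Big(\int_\G|\Pi_c u|^p\dd x+\int_\G|u_c^\perp|^p\dd x\Big).
\]

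\textbf{The threshold part.} By Lemma \ref{lem:thresholdFinite}, every element of $\mathcal N_c$ vanishes on the half-lines. Hence $\int_\G|\Pi_c u|^p=\int_\K|\Pi_c u|^p$, and \eqref{as} gives directly the term $C\|\Pi_c u\|_{L^2}^p$, with $C$ independent of $c$.

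\textbf{Reduction for the orthogonal part.} Since $\Pi_c$ is a spectral projection of $\Lambda_c$ onto its kernel, it commutes with $\Lambda_c$. Therefore $T_c(u)=T_c(u_c^\perp)$ and $\|u_c^\perp\|_{L^2}\le\|u\|_{L^2}$, so it suffices to prove the last two terms of \eqref{eq:modifiedGraphGN} for $w:=u_c^\perp$.

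\textbf{Spectral splitting of $w$.} I would split $w$ with the spectral resolution $E_A$ of $A=\mathscr D_0^2$ (as in Lemma \ref{lemma2.1}):
\[
w_L:=E_A([0,m^2c^2])w,\qquad w_H:=w-w_L .
\]
Both pieces are still orthogonal to $\mathcal N_c$, because $\Pi_c$ is also a function of $A$. I would use two elementary bounds on $\lambda\mapsto(c^2\lambda+m^2c^4)^{1/2}-mc^2=\lambda/(\sqrt{m^2+\lambda/c^2}+m)$:
\begin{itemize}
\item on $[0,m^2c^2]$ it is $\ge \lambda/((1+\sqrt2)m)$, so $\|w_L'\|_{L^2}^2=\int\lambda\,d\mu_{w_L}\le C\,T_c(w)$;
\item on $(m^2c^2,\infty)$ it is $\ge C c(1+\lambda)^{1/2}$, so $\|w_H\|_{H^{1/2}}^2\le C\int(1+\lambda)^{1/2}d\mu_{w_H}\le Cc^{-1}T_c(w)$, using the equivalence of $\|\cdot\|_1$ with the $H^{1/2}$ norm.
\end{itemize}

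\textbf{High-frequency piece.} Inequality \eqref{eq:new} applied to $w_H$ gives
\[
\|w_H\|_{L^p}^p\le C\|w\|_{L^2}^2\big(c^{-1}T_c(w)\big)^{(p-2)/2}=Cc^{-\frac{p-2}{2}}\|w\|_{L^2}^2T_c(w)^{2\theta}.
\]
This is exactly the third term of \eqref{eq:modifiedGraphGN}.

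\textbf{Low-frequency piece.} Here I want the $H^1$-type bound
\[
\|w_L\|_{L^p}^p\le C\|w_L\|_{L^2}^{\frac p2+1}\|w_L'\|_{L^2}^{\frac p2-1},
\]
and since $(\frac p2-1)/2=\theta$ this yields the middle term. For the first spinor component, which lies in $H^1_{\mathrm K}$, this is exactly \eqref{eq:GN_H1_graph}.

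\textbf{Main obstacle: the second component of $w_L$.} It lies in $H^1_{\mathrm D}(\G)$, which is not continuous at the vertices, so edgewise GN on bounded edges only gives
\[
\|g\|_{L^\infty(\K)}^2\le C\big(\|g\|_{L^2}\|g'\|_{L^2}+\|g\|_{L^2}^2\big).
\]
The extra $\|g\|_{L^2}^2$ term would produce an uncontrolled $\|w\|_{L^2}^p$. This is precisely where orthogonality to $\mathcal N_c$ must enter: $\ker\mathcal L^{\mathrm D}_\G$ consists of the edgewise constants on $\K$ satisfying the balance condition, which is the second-component part of $\mathcal N_c$. The aim is therefore to show that for $g\in H^1_{\mathrm D}(\G)$ orthogonal to $\ker\mathcal L^{\mathrm D}_\G$,
\[
\|g\|_{L^\infty(\G)}^2\le C\|g\|_{L^2}\|g'\|_{L^2}.
\]
I would try this first by a contradiction/compactness argument on the finite-dimensional vertex data. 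Assume the inequality fails along a normalized sequence with $\|g\|_{L^\infty}=1$. On the half-lines, the one-dimensional estimate $|g(0)|^2\le 2\|g\|\|g'\|$ holds without an $L^2$ term. On $\K$, write $g$ as its edgewise mean plus a fluctuation controlled by $\ell_e^{1/2}\|g'\|$. Then the vector of edge means must asymptotically satisfy the balance condition up to the half-line vertex values; orthogonality to $\ker B$, together with the half-line contribution, then forces a contradiction. Once this is done, summing the three contributions yields \eqref{eq:modifiedGraphGN}, with constants independent of $c\ge c_0$.
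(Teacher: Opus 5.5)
Your proposal is correct and has the same architecture as the paper's proof. You remove $\Pi_c u$ (elements of $\mathcal N_c$ vanish on the half-lines, so \eqref{as} controls $\|\Pi_c u\|_{L^p(\G)}$; the paper uses this implicitly). You then split $u_c^\perp$ spectrally, bound the low part by an $H^1$-type Gagliardo--Nirenberg inequality with $\|w_L'\|_{L^2}^2\le CT_c(u)$, and bound the high part by \eqref{eq:new} with $\|w_H\|_{H^{1/2}}^2\le Cc^{-1}T_c(u)$. The choice of cutoff is cosmetic: the paper takes $E_c((0,c^2])$ for $\Lambda_c$ and gets the derivative bound from $\|\mathscr D_c v_\ell\|_{L^2}^2=m^2c^4\|v_\ell\|_{L^2}^2+c^2\|v_\ell'\|_{L^2}^2$, while you cut $\mathscr D_0^2$ at $m^2c^2$.

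The genuine difference is the low-frequency estimate for the second spinor component. The paper applies Lemma~\ref{lem:graph_Hhalf_GN}(i) directly to $v_\ell$. However, an $H^1$ Gagliardo--Nirenberg inequality without an $L^2$ remainder fails on edgewise $H^1(\G)$: a constant on a single bounded edge, or any nonzero element of $\ker\mathcal L^{\mathrm D}_\G$, is a counterexample. Moreover, \cite{AdamiSerraTilliJFA2016} covers only vertex-continuous functions, i.e.\ the $H^1_{\mathrm K}$ component. The paper notes that $0$ was removed from the spectral window but never uses this fact. Your bound $\|g\|_{L^\infty(\G)}^2\le C\|g\|_{L^2}\|g'\|_{L^2}$ for $g\in H^1_{\mathrm D}(\G)\cap(\ker\mathcal L^{\mathrm D}_\G)^\perp$ is exactly what makes that step valid, and it is the one place where projecting out $\mathcal N_c$ is really needed. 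Your route therefore costs one extra compactness lemma, with a constant depending only on $\G$ (hence uniform in $c$), but it closes a point the paper passes over.

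When you write out that lemma, make two details explicit.

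\emph{First}, to make the edge fluctuations vanish you need $\|g_n'\|_{L^2}\to0$, not just $\|g_n\|_{L^2}\|g_n'\|_{L^2}\to0$. Since $g_n\to0$ uniformly on the half-lines, $\|g_n\|_{L^\infty}=1$ is eventually attained on some bounded edge $e$. The bound $\|g\|_{L^\infty(e)}^2\le\ell_e^{-1}\|g\|_{L^2(e)}^2+2\|g\|_{L^2(e)}\|g'\|_{L^2(e)}$ then gives a lower bound on $\|g_n\|_{L^2}$. It may be unbounded above, which is harmless.

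\emph{Second}, the half-line vertex values tend to $0$, so the limit $a$ of the edge means satisfies $Ba=0$ exactly. The $L^2$-orthogonality is $\ell_e$-weighted. Test it against the kernel element equal to $a_e$ on each bounded edge and $0$ elsewhere; this gives $\sum_e\ell_e|a_e|^2=0$, which contradicts $\|g_n\|_{L^\infty}=1$.
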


\begin{proof}
We give a detailed proof, because this is the point where the Euclidean Fourier argument has to be replaced by spectral calculus on the graph. 
Let $E_c(\cdot)$ be the spectral resolution of the nonnegative self-adjoint operator
\[
 \Lambda_c=|\mathscr{D}_c|-mc^2.
\]
Put
\[
\nu_c:=u_c^\perp=(I-\Pi_c )u,
\qquad
\nu_\ell:=E_c((0,c^2])\nu_c,
\qquad
\nu_h:=E_c((c^2,+\infty))\nu_c.
\]
Then
\[
\nu_c=\nu_\ell+\nu_h,
\qquad
\nu_\ell\perp \nu_h\quad\text{in }L^2,
\]
and, by the spectral theorem,
\begin{equation}\label{eq:spectralMeasureTcDetailed}
T_c(u)=T_c(\nu_c)=T_c(\nu_\ell)+T_c(\nu_h),
\qquad
T_c(w)=\int_{[0,+\infty)} \lambda\,\dd\mu_w^c( \lambda),
\end{equation}
where
\[
\mu_w^c(B):=\|E_c(B)w\|_{L^2}^2
\]
is the spectral measure associated with $ \Lambda_c$ and $w$.  Notice that the endpoint $0$ has been removed from the definition of $\nu_\ell$, because the corresponding spectral subspace is precisely $\mathcal N_c $ and has already been projected out.

We now turn to the estimation of the low spectral part $\nu_\ell$. For
$w\in\operatorname{dom}(\D_c)$,
It follows from the self-adjointness of the Dirac operator that
\begin{equation}
        \|\D_cw\|_{L^2}^2=m^2c^4\|w\|_{L^2}^2+c^2\|w'\|_{L^2}^2 .
\label{eq:5}
\end{equation}
Since $v_\ell$ has spectral support in $\{0< \lambda\le c^2\}$ for $\Lambda_c$, we have
$v_\ell\in\operatorname{dom}(\D_c)$ and, using $|\D_c|=mc^2+\Lambda_c$ on this support,
\begin{equation}
  \begin{split}
     c^2\|v_\ell'\|_{L^2}^2
 &=\|\D_cv_\ell\|_{L^2}^2-m^2c^4\|v_\ell\|_{L^2}^2  \\
 &=\int_{(0,c^2]}\bigl((mc^2+ \lambda)^2-m^2c^4\bigr)\,d\mu_{v_\ell}( \lambda) \\ 
 &=\int_{(0,c^2]}(2mc^2 \lambda+ \lambda^2)\,d\mu_{v_\ell}( \lambda)  \\
 &\le (2m+1)c^2\int_{(0,c^2]} \lambda\,d\mu_{v_\ell}( \lambda)\\
  &=(2m+1)c^2T_c(v_\ell),
  \end{split}
\label{eq:6}
\end{equation}
for $c\ge1$.  Hence
\begin{equation}
        \|v_\ell'\|_{L^2}^2\le C T_c(v_\ell).
\label{eq:7}
\end{equation}
Applying Lemma \ref{lem:graph_Hhalf_GN} to $v_\ell$ and using \eqref{eq:7} and \eqref{eq:spectralMeasureTcDetailed}, we obtain
\begin{equation}
\begin{aligned}
        \|v_\ell\|_{L^p(\mathcal G)}^p
        &\le C\|v_\ell\|_{L^2}^{\frac{p+2}{2}}
                 \|v_\ell'\|_{L^2}^{\frac{p-2}{2}}  \\
        &\le C\|v\|_{L^2}^{\frac{p+2}{2}}T_c(v_\ell)^{\frac{p-2}{4}}  \\
        &\le C\|u_c^\perp\|_{L^2}^{\frac{p+2}{2}}T_c(u)^\theta .
\end{aligned}
\label{eq:9}
\end{equation}

It remains to estimate the high spectral part.  We first recall that
\begin{equation}
        \|w\|_{H^{1/2}(\mathcal G)}^2\le Cc^{-1}\|w\|_c^2,
        \qquad w\in Y_c,
\label{eq:10}
\end{equation}
valid for all sufficiently large $c$. 
\begin{comment}

To see this, let $D_0=-i\sigma_1\frac{d}{dx}$ with the
same vertex conditions.  Since
\[
        D_c^2=c^2D_0^2+m^2c^4I,
        \qquad
        |D_c|=c(D_0^2+m^2c^2I)^{1/2},
\]
the spectral theorem gives, for $c\ge c_0$ with $m^2c^2\ge1$,
\[
\begin{aligned}
        \|w\|_{H^{1/2}(\mathcal G)}^2
        &\le C\|(I+D_0^2)^{1/4}w\|_{L^2}^2  \\
        &= C\langle (I+D_0^2)^{1/2}w,w\rangle  \\
        &\le C\langle (D_0^2+m^2c^2I)^{1/2}w,w\rangle
        =Cc^{-1}\|w\|_c^2.
\end{aligned}
\]
This proves \eqref{eq:10}.
\end{comment}
On the support of $v_h$ one has $\Lambda_c>c^2$. Therefore
\begin{equation}
        \|v_h\|_{L^2}^2\le c^{-2}T_c(v_h),
\label{eq:11}
\end{equation}
and hence
\begin{equation}
        \|v_h\|_c^2
        =mc^2\|v_h\|_{L^2}^2+T_c(v_h)
        \le (m+1)T_c(v_h).
\label{eq:12}
\end{equation}
Combining \eqref{eq:10} and \eqref{eq:12} yields
\begin{equation}
        \|v_h\|_{H^{1/2}(\mathcal G)}^2\le Cc^{-1}T_c(v_h).
\label{eq:13}
\end{equation}
Thus, by Lemma \ref{lem:graph_Hhalf_GN}, together with \eqref{eq:13}, \eqref{eq:spectralMeasureTcDetailed}, and $\|v_h\|_{L^2}\le\|v\|_{L^2}$,
\begin{equation}
\begin{aligned}
        \|v_h\|_{L^p(\mathcal G)}^p
        &\le C\|v_h\|_{L^2}^2\|v_h\|_{H^{1/2}(\mathcal G)}^{p-2}  \\
        &\le Cc^{-\frac{p-2}{2}}\|v\|_{L^2}^2T_c(v_h)^{\frac{p-2}{2}} \\
        &\le Cc^{-\frac{p-2}{2}}\|u_c^\perp\|_{L^2}^2T_c(u)^{2\theta}.
\end{aligned}
\label{eq:15}
\end{equation}
Finally, combining \eqref{as}, \eqref{eq:9}, and \eqref{eq:15} yields
\[
\begin{aligned}
\int_{\mathcal G}|u|^p\,dx
&\le C\left(
        \|z\|_{L^p(\mathcal G)}^p
        +\|v_\ell\|_{L^p(\mathcal G)}^p
        +\|v_h\|_{L^p(\mathcal G)}^p
      \right) \\
&\le C\left(
   \|\Pi_cu\|_{L^2}^p
   +\|u_c^\perp\|_{L^2}^{\frac{p+2}{2}}T_c(u)^\theta
   +c^{-\frac{p-2}{2}}\|u_c^\perp\|_{L^2}^2T_c(u)^{2\theta}
\right).
\end{aligned}
\]
\end{proof}

\begin{lemma}\label{lem:graphPseudoGN}
Let $4<p<6$, set $\theta=(p-2)/4$, and choose $s$ as in \eqref{eq:sChoiceGraph}. Then there exist constants $C>0$, $\eta>0$, and $c_0>0$ such that the following assertions hold for every $c\ge c_0$.

\begin{enumerate}[label=\textup{(\roman*)}]
\item If $u\in\mathcal O_c$, then, with $u_c^\perp=(I-\Pi_c )u$,
\begin{equation}\label{eq:localModifiedGraphGN}
\int_\G |u|^p\dd x
\le C\left(
\|\Pi_c u\|_{L^2}^p
+\|u_c^\perp\|_{L^2}^{\frac{p+2}{2}}T_c(u)^\theta
+c^{-\eta}\|u_c^\perp\|_{L^2}^2T_c(u)
\right).
\end{equation}

\item If $u\in Y_c$ and $\|u\|_c\le c^s$, then
\begin{equation}\label{eq:localNonlinearSmall}
\int_\G |u|^p\dd x
\le C c^{-\eta}\|u\|_{L^2}^2\|u\|_c^2.
\end{equation}

\item If $u\in Y_c$, $\|u\|_c\le  c^s$, and $h\in Y_c$, then there is a number
\(
\gamma\in(0,2]
\),
depending only on $p$, such that
\begin{equation}\label{eq:localHessianEstimateGeneral}
\int_\G |u|^{p-2}|h|^2\dd x
\le C c^{-\eta}\|u\|_{L^2}^\gamma\|h\|_c^2.
\end{equation}
\item If $u\in Y_c$, $\|u\|_c\le  c^s$, $\|u\|_{L^2}\le 1$, and $h\in Y_c$, there holds
\begin{equation}\label{eq}
\int_\G |u|^{p-1}|h|\dd x
\le C c^{-\eta}\|u\|_c\|h\|_c.
\end{equation}
\end{enumerate}
\end{lemma}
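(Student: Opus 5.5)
The plan is to obtain all four estimates by interpolating between the estimates already available and the constraint $\|u\|_c\le c^s$. The basic tool is the Dirac Gagliardo--Nirenberg inequality of Corollary~\ref{lem:GNDirac}. Every time a power of $\|u\|_c$ beyond the quadratic one appears, I bound it by $c^{s(\cdot)}$. The choice \eqref{eq:sChoiceGraph}, equivalently $s(p-4)<\frac{p-2}{2}$, then makes the net power of $c$ negative. Throughout I set
\[
\eta_0:=\frac{p-2}{2}-s(p-4)>0,
\]
and $\eta$ will be the minimum of finitely many positive exponents of this type.

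For (i), I start from Lemma~\ref{lem:thresholdMGN}. The first two terms are already in the required form. For the third term I use $2\theta=\frac{p-2}{2}$ and $T_c(u)\le\|u\|_c^2<c^{2s}$ on $\mathcal O_c$. This gives
\[
T_c(u)^{2\theta}=T_c(u)\,T_c(u)^{\frac{p-4}{2}}\le c^{s(p-4)}T_c(u),
\]
so the third term becomes $c^{-\eta_0}\|u_c^\perp\|_{L^2}^2T_c(u)$.

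For (ii), I apply Corollary~\ref{lem:GNDirac} with $q=p$ and split $\|u\|_c^{p-2}=\|u\|_c^2\|u\|_c^{p-4}\le c^{s(p-4)}\|u\|_c^2$. This produces exactly $Cc^{-\eta_0}\|u\|_{L^2}^2\|u\|_c^2$.

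For (iii) and (iv), I use Hölder with a very large exponent on $h$. Two facts drive this step. First, for any finite $r\ge2$, Corollary~\ref{lem:GNDirac} combined with $\|h\|_{L^2}^2\le\|h\|_c^2/(mc^2)$ gives
\[
\|h\|_{L^r}^r\le Cc^{-\frac{r-2}{2}}\|h\|_c^{r-2}\|h\|_{L^2}^2\le Cc^{-\frac r2}\|h\|_c^r,
\]
that is, $\|h\|_{L^r}\le Cc^{-1/2}\|h\|_c$, uniformly in $r$ up to the constant. Second, for $u$ I take $q$ slightly larger than the relevant power.
\begin{itemize}[label=--]
\item In (iii), I take $q=p-2+\delta$ and $r$ defined by $\frac{p-2}{q}+\frac2r=1$. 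Then $\|h\|_{L^r}^2\le Cc^{-1}\|h\|_c^2$. By Corollary~\ref{lem:GNDirac} and $\|u\|_c\le c^s$,
\[
\|u\|_{L^q}^{p-2}\le Cc^{(s-\frac12)\frac{(q-2)(p-2)}{q}}\|u\|_{L^2}^{\gamma},\qquad \gamma=\frac{2(p-2)}{q}\in(0,2].
\]
As $\delta\to0$, the total $c$-exponent tends to $(s-\tfrac12)(p-4)-1$. This is negative precisely because $s(p-4)<\frac{p-2}{2}$, so a small $\delta$ gives some $\eta>0$.
\item In (iv), I take $q=p-1+\delta$ and $\frac{p-1}{q}+\frac1r=1$, which gives $\|h\|_{L^r}\le Cc^{-1/2}\|h\|_c$. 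Using $\|u\|_{L^2}\le1$,
\[
\|u\|_{L^q}^{p-1}\le Cc^{-\frac{(q-2)(p-1)}{2q}}\|u\|_c^{\frac{(q-2)(p-1)}{q}}.
\]
I keep one factor $\|u\|_c$ and bound the remaining power $\frac{(q-2)(p-1)}{q}-1$ by $c^{s(\cdot)}$. As $\delta\to0$, the total exponent tends to $-\frac{p-3}{2}+s(p-4)-\frac12=-\eta_0<0$.
\end{itemize}

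The main obstacle is step (iii). A naive Hölder split with $h\in L^p$ gives a $c$-exponent of order $\frac{(p-2)^2}{p-4}$, which blows up as $p\downarrow4$ and need not be negative. The fix is to push the exponent on $h$ to near $L^\infty$, so that $h$ always yields a full factor $c^{-1}$ through the spectral gap. The $u$-factor then carries only the power $p-2$, and the resulting exponent depends on $s$ exactly through the condition in \eqref{eq:sChoiceGraph}. Some care is needed to choose $\delta$ small but positive so that $r<\infty$ and Corollary~\ref{lem:GNDirac} applies with a fixed constant.
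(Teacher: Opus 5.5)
Your proposal is correct and, for (i)--(iii), is the paper's argument. The third term of Lemma~\ref{lem:thresholdMGN} is absorbed via $T_c(u)^{(p-4)/2}\le c^{s(p-4)}$. Part (ii) is Corollary~\ref{lem:GNDirac} with $\|u\|_c^{p-4}\le c^{s(p-4)}$. Part (iii) is the same H\"older split with $q\downarrow p-2$ and $r\uparrow\infty$, giving the limiting exponent $s(p-4)-\frac{p-2}{2}<0$. Your bound $\|h\|_{L^r}\le Cc^{-1/2}\|h\|_c$ is weaker than the $Cc^{-1/2-1/r}\|h\|_c$ the spectral gap actually gives, but this is harmless.

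The only difference is in (iv). The paper writes $\int_\G|u|^{p-1}|h|\le(\int_\G|u|^p)^{1/2}(\int_\G|u|^{p-2}|h|^2)^{1/2}$ and plugs in (ii) and (iii) with $\|u\|_{L^2}\le1$. Your direct H\"older with $q=p-1+\delta$ reaches the same exponent $-\eta_0$ independently, at the cost of an extra computation.
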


\begin{proof}
        \begin{enumerate}[label=\textup{(\roman*)}]
                \item
We first derive \eqref{eq:localModifiedGraphGN} from Lemma \ref{lem:thresholdMGN}. If $u\in\mathcal O_c$, then $T_c(u)\le \|u\|_c^2\le c^{2s}$. Since $2\theta-1=(p-4)/2>0$,
\[
c^{-\frac{p-2}{2}}T_c(u)^{2\theta}
=c^{-\frac{p-2}{2}}T_c(u)^{\frac{p-4}{2}}T_c(u)
\le c^{-\frac{p-2}{2}+s(p-4)}T_c(u).
\]
The choice \eqref{eq:sChoiceGraph} makes the exponent negative. Choosing
\[
0<\eta<\frac{p-2}{2}-s(p-4)
\]
and using $\|u_c^\perp\|_{L^2}\le\|u\|_{L^2}\le1$ on $\mathcal O_c$ gives \eqref{eq:localModifiedGraphGN}.

\item Corollary \ref{lem:GNDirac} gives, for $u\in Y_c$,
\[
\int_\G |u|^p\dd x
\le C c^{-\frac{p-2}{2}}\|u\|_c^{p-2}\|u\|_{L^2}^2.
\]
If $\|u\|_c\le c^s$, then
\[
\int_\G |u|^p\dd x
\le C c^{-\frac{p-2}{2}+s(p-4)}\|u\|_{L^2}^2\|u\|_c^2.
\]
After decreasing $\eta$ if necessary, \eqref{eq:sChoiceGraph} gives \eqref{eq:localNonlinearSmall}.

\item Choose $q>p-2$ close enough to $p-2$ and let $r\in(2,+\infty)$ be such that
\begin{equation}\label{eq:qrLocalRelation}
\frac{p-2}{q}+\frac2r=1.
\end{equation}
Using Corollary \ref{lem:GNDirac} and $\|u\|_c\le c^s$, we get
\[
\|u\|_{L^q(\G)}^{p-2}
\le C c^{\left(1-\frac2q\right)(p-2)(s-\frac12)}
\|u\|_{L^2}^{\frac{2(p-2)}q}.
\]
Similarly,
\[
\|h\|_{L^r(\G)}^2
\le C c^{-1-2/r}\|h\|_c^2.
\]
H\"older's inequality yields
\[
\int_\G |u|^{p-2}|h|^2\dd x
\le C_R c^{E(q)}\|u\|_{L^2}^{\frac{2(p-2)}q}\|h\|_c^2,
\]
where
\[
E(q)=\left(1-\frac2q\right)(p-2)\left(s-\frac12\right)-1-\frac2r.
\]
Using \eqref{eq:qrLocalRelation} and letting $q\to p-2$ gives
\[
E(q)\longrightarrow (p-4)s-\frac{p-2}{2}<0
\]
by \eqref{eq:sChoiceGraph}. Taking $q$ sufficiently close to $p-2$, setting $\gamma=2(p-2)/q$, and decreasing $\eta$ if necessary proves \eqref{eq:localHessianEstimateGeneral}.
\item
By Cauchy inequality, \eqref{eq:localNonlinearSmall} and \eqref{eq:localHessianEstimateGeneral}, we get
\[
        \int_\G |u|^{p-1}|h|\,dx
        \le C\Bigl(\int_\G |u|^p\,dx\Bigr)^{1/2}
              \Bigl(\int_\G |u|^{p-2}|h|^2\,dx\Bigr)^{1/2}
        \le Cc^{-\eta}\|u\|_c\|h\|_c .
\]
\end{enumerate}
\end{proof}

{

\begin{lemma}\label{lem:local-fibre-PS-supercritical}
There exists $c_1>0$ such that the following holds for every $c\ge c_1$ and every
$v\in\mathcal O_c^+$.  Let $(u_n)\subset\mathcal M_c(v)$ be a Palais--Smale sequence for
$\mathcal I_c|_{\mathcal M_c(v)}$ at a positive level $d>0$, namely
\begin{equation}\label{110}
   \mathcal I_c(u_n)\to d>0,
        \qquad
        \|d(\mathcal I_c|_{\mathcal M_c(v)})(u_n)\|\to0 .
\end{equation}
Writing
\[
        u_n=t_nv+w_n,
        \qquad t_n\ge0,
        \qquad w_n\in Y_c^-,
        \qquad t_n^2+\|w_n\|_{L^2}^2=1,
\]
one has:
\begin{enumerate}[label=\textup{(\roman*)}]
\item $(u_n)$ is bounded in $Y_c$.
\item $\omega_n:=\omega(u_n)$ is bounded and
\begin{equation}\label{eq:local-omega-positive-supercritical}
        \liminf_{n\to\infty}\omega_n
        \ge (1-Cc^{-\eta})\|v\|_c^2>0 .
\end{equation}
\item After passing to a subsequence, $u_n\to u$ strongly in $Y_c$, for some
$u\in\mathcal M_c(v)$.
\end{enumerate}
Here and below $C>0$ is independent of $c$, $v$ and $n$.
\end{lemma}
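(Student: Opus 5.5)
We work on the fibre $\mathcal M_c(v)$, a sphere in the Hilbert space $Y_c^-\oplus\Span\{v\}$. By Lagrange multipliers, the Palais--Smale condition reads
\[
\dd\mathcal I_c(u_n)[h]-2\omega_n\Re(u_n,h)_{L^2}=o_n(1)\|h\|_c \qquad \text{for } h\in Y_c^-\oplus\Span\{v\}.
\]
Testing with $h=u_n$ identifies the multiplier:
\[
\omega_n=t_n^2\|v\|_c^2-\|w_n\|_c^2-a\int_\K|u_n|^p\dd x+o_n(1)\|u_n\|_c .
\]

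\textbf{Step (i): boundedness.} We start from $\mathcal I_c(u_n)\to d>0$, which gives
\[
\|w_n\|_c^2+\frac{2a}{p}\int_\K|u_n|^p \le t_n^2\|v\|_c^2-d+o(1)\le \|v\|_c^2 .
\]
Since $\|v\|_c<c^s$, this bounds $\|w_n\|_c$, hence $\|u_n\|_c$, uniformly in $n$. We also record that
\[
\|w_n\|_c\le\|v\|_c \quad\text{and}\quad \|u_n\|_c^2=t_n^2\|v\|_c^2+\|w_n\|_c^2\le 2\|v\|_c^2<2c^{2s}.
\]
The harmless factor $\sqrt2$ is absorbed by enlarging $C$ in Lemma \ref{lem:graphPseudoGN}, whose proof only needs $\|u\|_c\le Cc^s$.

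\textbf{Step (ii): the multiplier.} Boundedness of $\omega_n$ follows from (i) and the identity above. For the lower bound we test the constraint condition with $h=w_n$ and subtract from the $u_n$ test. This yields
\[
t_n^2\|v\|_c^2-t_n^2\omega_n = a\Re\int_\K|u_n|^{p-2}u_n\cdot\overline{t_nv}+o(1).
\]
Lemma \ref{lem:graphPseudoGN}(iv) with $\|u_n\|_{L^2}=1$ bounds the right side by
\[
Cc^{-\eta}t_n\|u_n\|_c\|v\|_c\le Cc^{-\eta}t_n\|v\|_c^2 .
\]
So if $t_n$ stays away from $0$, dividing by $t_n^2$ gives $\omega_n\ge \|v\|_c^2-Cc^{-\eta}\|v\|_c^2/t_n$. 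The key point is therefore a lower bound on $t_n$; this is the step I expect to be the main obstacle.

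To get it, we test with $h=w_n$ alone:
\[
-\|w_n\|_c^2-a\Re\int|u_n|^{p-2}u_n\overline{w_n}=\omega_n\|w_n\|_{L^2}^2+o(1).
\]
Lemma \ref{lem:graphPseudoGN}(iv) gives $\|w_n\|_c^2\le Cc^{-\eta}\|v\|_c\|w_n\|_c+|\omega_n|\|w_n\|_{L^2}^2+o(1)$. We then use $\|w_n\|_{L^2}^2\le \|w_n\|_c^2/(mc^2)$ and $|\omega_n|\le C\|v\|_c^2\le Cc^{2s}$, with care, since $2s>2$ makes this crude. Instead I would first derive $\omega_n\gtrsim -o(1)$ from the identity, so the $\omega_n$ term has a good sign. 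That sign gives
\[
\|w_n\|_c\le Cc^{-\eta}\|v\|_c \quad\text{and}\quad \|w_n\|_{L^2}^2\le Cc^{-2\eta-2}\|v\|_c^2\le Cc^{2s-2-2\eta}.
\]
If that is not small, I would instead use $\|w_n\|_{L^2}^2\le 1$ and the energy bound directly. From $\|w_n\|_c^2\ge mc^2\|w_n\|_{L^2}^2$ and the energy inequality, $mc^2(1-t_n^2)\le t_n^2\|v\|_c^2$. Since $\|v\|_c^2\ge mc^2$, this gives $t_n^2\ge mc^2/(mc^2+\|v\|_c^2)$. That bound is positive but weak, so the refined $w_n$ estimate must be combined with it to get $t_n\to1$ up to $O(c^{-\eta})$. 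With $t_n\ge 1-Cc^{-\eta}$, the bound in (ii) follows.

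\textbf{Step (iii): compactness.} By (i) we may assume $u_n\rightharpoonup u$ in $Y_c$. Since $t_n$ is bounded, $t_n\to t$. The nonlinearity converges strongly in $L^p(\K)$ by the compact embedding \eqref{eq:intro-embedding}. Testing the equation with $w_n-w$ gives
\[
-\|w_n-w\|_c^2-\omega\|w_n-w\|_{L^2}^2=o(1).
\]
Because $\omega>0$ by (ii), both terms are nonpositive, so $w_n\to w$ strongly. The $v$-direction is one-dimensional, so $u_n\to u=tv+w$ strongly, and $u\in\mathcal M_c(v)$ by strong $L^2$ convergence.
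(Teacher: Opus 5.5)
Your steps (i) and (iii), and your test with $h=u_n-w_n=t_nv$ in (ii), are exactly the paper's argument. However, step (ii) as written has a genuine gap.

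You bound the nonlinear term by $Cc^{-\eta}t_n\|u_n\|_c\|v\|_c\le Cc^{-\eta}t_n\|v\|_c^2$. After dividing by $t_n^2$ this leaves a factor $1/t_n$. You then declare a uniform bound $t_n\ge 1-Cc^{-\eta}$ to be the key point, but you never prove it, and the route you sketch does not close:
\begin{enumerate}[label=\textup{(\alph*)}]
\item Your claim that $\omega_n\ge -o(1)$ ``follows from the identity'' is unsupported. The identity only gives $\omega_n=\mathcal I_c(u_n)-\frac{a(p-2)}{p}\int_\K|u_n|^p\,dx+o(1)$, whose sign is not evident.
\item A sign on $\omega_n$ is precisely what (ii) is meant to establish, so using it to bound $t_n$ is circular.
\item Your fallback bound $t_n^2\ge mc^2/(mc^2+\|v\|_c^2)$ is of order $c^{2-2s}$ when $\|v\|_c$ is close to $c^s$. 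Inserted into your inequality it only yields $\omega_n\ge(1-Cc^{s-1-\eta})\|v\|_c^2$. This is useless unless $\eta>s-1$, and nothing in the choice of $\eta<\frac{p-2}{2}-s(p-4)$ guarantees that.
\item The ``combination'' with the refined $w_n$-estimate is never carried out.
\end{enumerate}
Since (iii) relies on $\omega>0$ (or at least $\omega>-mc^2$) from (ii), it inherits the gap.

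The missing observation is already in your step (i). The energy inequality gives $\|w_n\|_c^2\le t_n^2\|v\|_c^2-d+o(1)\le t_n^2\|v\|_c^2$; you even use this later to get $mc^2(1-t_n^2)\le t_n^2\|v\|_c^2$. Hence $\|u_n\|_c\le\sqrt2\,t_n\|v\|_c$, not merely $\sqrt2\|v\|_c$. With this, Lemma \ref{lem:graphPseudoGN}(iv) bounds the nonlinear term by $Cc^{-\eta}t_n^2\|v\|_c^2$. The factor $t_n^2$ then cancels exactly and
\[
\omega_n\ge(1-Cc^{-\eta})\|v\|_c^2-\frac{o(1)}{t_n^2}.
\]
The only lower bound on $t_n$ still needed is the non-uniform one, $t_n^2\ge d/(2\|v\|_c^2)$ for large $n$. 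It follows from $\mathcal I_c(u_n)\to d>0$ and disposes of the $o(1)$ term for fixed $c$ and $v$. This is what the paper does when it writes $\|u_n\|_c\|u_n^+\|_c\le C\|u_n^+\|_c^2$.

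The bound $t_n\ge 1-Cc^{-\eta}$ is in fact true, but only as a consequence of (ii). Once $\omega_n>0$, the $w_n$-test gives $\|w_n\|_c\le Cc^{-\eta}t_n\|v\|_c+o(1)$. Then $\omega_n\|w_n\|_{L^2}^2\le Cc^{-\eta}\|u_n\|_c\|w_n\|_c+o(1)$ yields $\|w_n\|_{L^2}^2\le Cc^{-2\eta}+o(1)$.
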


\begin{proof}
        \begin{enumerate}[label=\textup{(\roman*)}]
                \item
For large $n$, $\mathcal I_c(u_n)\ge 0$.  Hence
\[
       0\le \|u_n^+\|_c^2-\|u_n^-\|_c^2-\mathcal A(u_n)
\]
Thus $u_n^-$  is bounded in $Y_c$, also, $u_n$  is bounded in $Y_c$.  
\item
It follows from \eqref{110}
that
\begin{equation}\label{eq:local-fibre-PS-Euler-supercritical}
       \sup_{\substack{\|h\|_c=1 \\ h\in Y_c^-\oplus\operatorname{span}\{v\}}} \bigl|\dd\mathcal I_c(u_n)[h]
        -2\omega_n\Re \int_G u_n\cdot\overline h\,dx\bigr|
        =o_n(1),
\end{equation}
Testing \eqref{eq:local-fibre-PS-Euler-supercritical} with $h=u_n^+$ gives
\begin{equation}\label{eq:positive-test-omega-supercritical}
        \omega_n \|u_n^+\|_{L^2}^2
        =\|u_n^+\|_c^2-\frac12\dd\mathcal A(u_n)[u_n^+]+o_(1).
\end{equation}
By \eqref{eq},
\begin{equation}\label{eq:Aprime-positive-supercritical}
        \frac12|\dd\mathcal A(u_n)[u_n^+]|
        \le Cc^{-\eta}\|u_n\|_c\|u_n^+\|_c
        \le Cc^{-\eta}\|u_n^+\|_c^2 .
\end{equation}
Combining \eqref{eq:positive-test-omega-supercritical} with
\eqref{eq:Aprime-positive-supercritical}, we obtain
\[
        \liminf_{n\to\infty}\omega_n
        \ge (1-Cc^{-\eta})\|v\|_c^2 .
\]
 \item By testing
\eqref{eq:local-fibre-PS-Euler-supercritical} with $u_n$ and using
$\dd\mathcal A(u_n)[u_n]=p\mathcal A(u_n)$, one obtains boundedness of $(\omega_n)$.  Hence, up to a subsequence, we can assume
\[
        \omega_n\to\omega>0,\quad u_n\rightharpoonup u\quad\hbox{weakly in }Y_c .
\]
Since the nonlinear term is supported on the compact core $\K$ and
$Y_c\hookrightarrow L^p(\K,\mathbb C^2)$ compactly, we have
\begin{equation}\label{eq:compact-core-PS-supercritical}
        u_n\to u
        \quad\hbox{strongly in }L^p(\K,\mathbb C^2).
\end{equation}
Taking $h=u_n^--u^-$ in \eqref{eq:local-fibre-PS-Euler-supercritical}, we get
\[
        -2(u_n^-,u_n^--u^-)_c
        -\dd\mathcal A(u_n)[u_n^--u^-]
        -2\omega_n\Re \int_\G u_n^-\cdot\overline{(u_n^--u^-)}\,dx
        \to0 .
\]
By \eqref{eq:compact-core-PS-supercritical},
$\dd\mathcal A(u_n)[u_n^--u^-]\to0$. Consequently,
\[
        \|u_n^--u^-\|_c^2+\omega\|u_n^--u^-\|_{L^2}^2\to0 .
\]
Thus $u_n^-\to u^-$ strongly in $Y_c$.
Since \(u_n^+=t_nv\), after passing to a subsequence \(t_n\to t\), hence \(u_n^+\to tv\) strongly in \(Y_c\). 
Together with \(u_n^-\to u^-\), this gives \(u_n\to u\) strongly in \(Y_c\).

\end{enumerate}
\end{proof}

\begin{lemma}\label{lem:local-fibre-concavity-supercritical}
Let $4<p<6$, $\eta$ as in Lemma~\ref{lem:graphPseudoGN}.
There exists $c_2>0$ such that, for every $c\ge c_2$ and every
$v\in\mathcal O_c^+$, the following holds.  For $w\in Y_c^-$ with $\|w\|_{L^2}<1$, define
\[
        a(w):=(1-\|w\|_{L^2}^2)^{1/2},
        \qquad
        \mathcal F_v(w):=\mathcal I_c(a(w)v+w).
\]
Then $\mathcal F_v$ is strictly concave on
\begin{equation}\label{eq:local-concavity-region-supercritical}
        \mathcal U_c(v):=\left\{w\in Y_c^-:\
        \|w\|_c^2+\|w\|_{L^2}^2\|v\|_c^2\le 2\mathcal A(v)\right\}.
\end{equation}
More precisely,
\begin{equation}\label{eq:local-concavity-estimate-supercritical}
       \dd^2 \mathcal F_v(w)[\xi,\xi]
        \le -\|\xi\|_c^2,
        \qquad w\in\mathcal U_c(v),\quad \xi\in Y_c^- .
\end{equation}
\end{lemma}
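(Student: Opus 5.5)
The plan is to compute the second derivative of $\mathcal F_v$ explicitly and split it into a quadratic part, which is strongly negative, and a nonlinear part, which is small by Lemma~\ref{lem:graphPseudoGN}.

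\textbf{Derivatives of $a$.} Since $v\in Y_c^+$ and $w\in Y_c^-$ are orthogonal both in $L^2$ and in $(\cdot,\cdot)_c$, we have
\[
\mathcal F_v(w)=a(w)^2\|v\|_c^2-\|w\|_c^2-\mathcal A(a(w)v+w),\qquad a(w)^2=1-\|w\|_{L^2}^2 .
\]
Thus the first two terms form the quadratic form $\|v\|_c^2-\|w\|_{L^2}^2\|v\|_c^2-\|w\|_c^2$. Its Hessian is $-2\|\xi\|_{L^2}^2\|v\|_c^2-2\|\xi\|_c^2\le-2\|\xi\|_c^2$, and this holds without any smallness assumption.

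For the nonlinear term, set $u=a(w)v+w$ and $h(\xi)=a'(w)[\xi]v+\xi$. Then
\[
\dd^2[\mathcal A(u)][\xi,\xi]=\dd^2\mathcal A(u)[h,h]+\dd\mathcal A(u)\big[a''(w)[\xi,\xi]v\big].
\]
The derivatives of $a$ are
\[
a'(w)[\xi]=-\frac{\Re(w,\xi)_{L^2}}{a},\qquad
a''(w)[\xi,\xi]=-\frac{\|\xi\|_{L^2}^2}{a}-\frac{(\Re(w,\xi)_{L^2})^2}{a^3}.
\]

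\textbf{Consequences of $w\in\mathcal U_c(v)$.} Since $v\in\mathcal O_c^+$, we have $\|v\|_c<c^s$ and $\|v\|_c^2\ge mc^2$. By \eqref{eq:localNonlinearSmall}, $\mathcal A(v)\le Cc^{-\eta}\|v\|_c^2$. Membership in $\mathcal U_c(v)$ then gives two bounds:
\[
\|w\|_{L^2}^2\le 2\mathcal A(v)/\|v\|_c^2\le Cc^{-\eta},\qquad
\|w\|_c^2\le Cc^{-\eta}\|v\|_c^2 .
\]
Hence $a\ge 1/2$ for $c$ large, and $\|u\|_c\le 2c^s$. We also have $\|u\|_{L^2}=1$, since $u\in S_c$. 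Finally,
\[
|a'(w)[\xi]|\le C\|w\|_{L^2}\|\xi\|_{L^2}\le Cc^{-\eta/2}c^{-1}\|\xi\|_c .
\]
It follows that $\|h\|_c\le \|\xi\|_c+Cc^{-\eta/2}c^{-1}\|v\|_c\|\xi\|_c$. Because $\|v\|_c/c$ may be as large as $c^{s-1}$, this is the delicate point, discussed below.

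\textbf{The nonlinear terms.} The Hessian satisfies $|\dd^2\mathcal A(u)[h,h]|\le C\int_\K|u|^{p-2}|h|^2$. By \eqref{eq:localHessianEstimateGeneral}, applied with the harmless constant $2c^s$ in place of $c^s$ (after shrinking $\eta$), this is at most $Cc^{-\eta}\|h\|_c^2$. The first-order term satisfies
\[
|\dd\mathcal A(u)[a''v]|\le C\|\xi\|_{L^2}^2\int|u|^{p-1}|v| .
\]
By \eqref{eq} this is at most $Cc^{-\eta}\|u\|_c\|v\|_c\|\xi\|_{L^2}^2\le Cc^{-\eta}c^{2s}c^{-2}\|\xi\|_c^2$, using $\|\xi\|_{L^2}^2\le\|\xi\|_c^2/(mc^2)$.

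\textbf{Main obstacle.} The hard part is the loss of powers of $c$ coming from $\|v\|_c\sim c^s$ with $s>1$. To handle it I would not bound $\|v\|_c$ crudely. Instead I would split $h$ and use the finer bound $\|w\|_{L^2}^2\|v\|_c^2\le 2\mathcal A(v)\le Cc^{-\eta}\|v\|_c^2$ directly:
\[
\|a'(w)[\xi]\,v\|_c\le C\|w\|_{L^2}\|v\|_c\|\xi\|_{L^2}\le C\big(c^{-\eta}\big)^{1/2}\|v\|_c\,c^{-1}\|\xi\|_c .
\]
For the $a''$ term I would use \eqref{eq:localNonlinearSmall}-type bounds together with Hölder, separating $|u|^{p-1}|v|$ and exploiting that $\int|u|^{p-1}|v|$ is controlled by $c^{-\eta}\|u\|_c\|v\|_c$ while $\|\xi\|_{L^2}^2$ gains $c^{-2}$. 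If $s$ is close to $1$, the resulting exponent $2s-2-\eta$ is negative or can be absorbed. If not, I would restrict $s$ further or use \eqref{eq:localModifiedGraphGN} to obtain a sharper bound on the nonlinear terms in terms of $T_c$.

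Once every error is bounded by $o_c(1)\|\xi\|_c^2$, we get $\dd^2\mathcal F_v(w)[\xi,\xi]\le -2\|\xi\|_c^2+o_c(1)\|\xi\|_c^2\le-\|\xi\|_c^2$ for $c\ge c_2$. This is \eqref{eq:local-concavity-estimate-supercritical}. Strict concavity on $\mathcal U_c(v)$ then follows because $\mathcal U_c(v)$ is convex: it is a sublevel set of a convex quadratic form.
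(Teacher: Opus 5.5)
Your Hessian computation is correct, but the way you estimate it creates the ``main obstacle'' you describe, and you leave that obstacle unresolved. So the proposal does not prove the lemma for the $s$ fixed in \eqref{eq:sChoiceGraph}. Two steps fail.

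First, you bound $|\dd^2\mathcal A(u)[h,h]|\le Cc^{-\eta}\|h\|_c^2$ and then need $\|h\|_c\le C\|\xi\|_c$. Your own estimate only gives $\|a'(w)[\xi]\,v\|_c\le Cc^{s-1-\eta/2}\|\xi\|_c$, which is unbounded unless $s<1+\eta/2$. The ``finer'' bound $\|w\|_{L^2}\|v\|_c\le\sqrt{2\mathcal A(v)}$ gives nothing better, since you only know $\mathcal A(v)\le Cc^{-\eta}\|v\|_c^2$.

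Second, you discard the term $-2\|v\|_c^2\|\xi\|_{L^2}^2$ of the quadratic part. After that, the $a''$-contribution $Cc^{-\eta}\|u\|_c\|v\|_c\|\xi\|_{L^2}^2$ can only be converted into $Cc^{2s-2-\eta}\|\xi\|_c^2$, which is small only if $2s-2<\eta$.

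Neither condition follows from \eqref{eq:sChoiceGraph}. Shrinking $s$ would change $\mathcal O_c$, and hence the statement. The alternative via \eqref{eq:localModifiedGraphGN} is not carried out.

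Two observations make the difficulty disappear.
\begin{enumerate}
\item The map $z\mapsto|z|^p$ is convex on $\mathbb C^2\cong\mathbb R^4$, so $\dd^2\mathcal A(u)[\zeta,\zeta]\ge0$ for every $\zeta$. This term enters $\dd^2\mathcal F_v(w)[\xi,\xi]$ with a minus sign, so it can simply be dropped. Then $\|a'(w)[\xi]v+\xi\|_c$ never needs to be estimated.
\item Keep the term $-2\|v\|_c^2\|\xi\|_{L^2}^2$. From $\|w\|_c^2\le Cc^{-\eta}\|v\|_c^2$ you get $\|u\|_c\le(1+Cc^{-\eta/2})\|v\|_c$. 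Combined with \eqref{eq}, this gives $|\dd\mathcal A(u)[a''(w)[\xi,\xi]v]|\le Cc^{-\eta}\|v\|_c^2\|\xi\|_{L^2}^2$, which that term absorbs.
\end{enumerate}
This yields
\[
\dd^2\mathcal F_v(w)[\xi,\xi]\le -2\|\xi\|_c^2-(2-Cc^{-\eta})\|v\|_c^2\|\xi\|_{L^2}^2\le-\|\xi\|_c^2
\]
for $c$ large. There is no loss of powers of $c$ and no condition on $s$ beyond \eqref{eq:sChoiceGraph}; this is precisely the paper's argument.
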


\begin{proof}
By \eqref{eq:localNonlinearSmall}, for $v\in\mathcal O_c^+$,
\begin{equation}\label{eq:A-v-local-small-supercritical}
        \mathcal A(v)
        \le Cc^{-\eta}\|v\|_c^2 .
\end{equation}
Thus, if $w\in\mathcal U_c(v)$, then
\begin{equation}\label{eq:w-small-local-supercritical}
        \|w\|_{L^2}^2\le Cc^{-\eta},
        \qquad
        \|w\|_c\le Cc^{-\eta/2}\|v\|_c .
\end{equation}
For $c$ sufficiently large, $\|w\|_{L^2}<1/2$ and $a(w)\ge1/2$.  If
$u=a(w)v+w$, then
\begin{equation}\label{eq:u-local-window-concavity-supercritical}
        \|u\|_{L^2}=1,
        \qquad
        \|u\|_c\le (1+Cc^{-\eta/2})\|v\|_c<2c^s .
\end{equation}
Therefore, by \eqref{eq},
\begin{equation}\label{eq:Aprime-v-local-supercritical}
        |\dd\mathcal A(u)[v]|
        \le Cc^{-\eta}\|u\|_c\|v\|_c
        \le Cc^{-\eta}\|v\|_c^2 .
\end{equation}

For $\xi\in Y_c^-$,
\[
       \dd a(w)[\xi]
        =-a(w)^{-1}\Re \int_G w\cdot\overline\xi\,dx,
\]
and
\[
        \dd^2a(w)[\xi,\xi]
        =-a(w)^{-1}\|\xi\|_{L^2}^2
        -a(w)^{-3}\left(\Re \int_G w\cdot\overline\xi\,dx\right)^2 .
\]
Consequently,
\begin{equation}\label{eq:a-identity-local-supercritical}
        (\dd a(w)[\xi])^2+a(w)\dd^2a(w)[\xi,\xi]
        =-\|\xi\|_{L^2}^2,
\end{equation}
and, using $a(w)\ge1/2$,
\begin{equation}\label{eq:a-second-bound-local-supercritical}
        |\dd^2a(w)[\xi,\xi]|\le C\|\xi\|_{L^2}^2 .
\end{equation}
Put
\[
        \zeta:=\dd a(w)[\xi]v+\xi .
\]
Using
\eqref{eq:a-identity-local-supercritical}, we get
\begin{equation}\label{eq:F-second-local-supercritical}
        \dd^2\mathcal F_v(w)[\xi,\xi]
        =-2\|\xi\|_c^2-2\|v\|_c^2\|\xi\|_{L^2}^2
        -\dd^2\mathcal A(u)[\zeta,\zeta]
        -\dd\mathcal A(u)\left[\dd^2a(w)[\xi,\xi]v\right] .
\end{equation}
Since $z\mapsto |z|^p$ is convex, $\dd^2\mathcal A(u)[\zeta,\zeta]\ge0$.  By
\eqref{eq:Aprime-v-local-supercritical} and \eqref{eq:a-second-bound-local-supercritical},
\[
        |\dd\mathcal A(u) \left[\dd^2a(w)[\xi,\xi]v\right]|
        \le Cc^{-\eta}\|v\|_c^2\|\xi\|_{L^2}^2 .
\]
Therefore
\[
        \dd^2\mathcal F_v(w)[\xi,\xi]
        \le -2\|\xi\|_c^2
        -\bigl(2-Cc^{-\eta}\bigr)\|v\|_c^2\|\xi\|_{L^2}^2 .
\]
Taking $c$ larger if necessary gives \eqref{eq:local-concavity-estimate-supercritical}.
\end{proof}

Similar to the mass-subcritical or critical case, to overcome the difficulty that $\mathcal{I}_c$ is unbounded from below on the $L^2$-sphere, we first reduce $\mathcal{I}_c$ to $\mathcal{O}_c^+$ via the following maximization problem:
\[
\rho(v)=\max_{u\in\mathcal M_c(v)}\mathcal I_c(u), \quad v\in \mathcal{O}_c^+.
\]
\begin{proposition}\label{prop:localReductionSuper}
Let $4<p<6$.  There exists $c_0>0$ such that, for every $c\ge c_0$ and every
$v\in \mathcal{O}_c^+$, there exists
\[
        \Phi_c(v)\in\mathcal M_c(v)
\]
such that
\[
        \mathcal I_c(\Phi_c(v))=\max_{u\in\mathcal M_c(v)}\mathcal I_c(u)=\rho_c(v).
\]
and 
\[
\dd\I_c(\Phi_c(v))[h]-2\omega\Re((\Phi_c(v), h)_{L^2}=0, \quad \forall h\in Y_c^-\oplus\operatorname{span}\{v\}.
\]
with $\omega=\omega(\Phi_c(v))\in \mathbb{R}^+$.
Writing
\[
        \Phi_c(v)=t_c(v)v+w_c(v),
        \qquad t_c(v)\ge0,
        \qquad w_c(v)\in Y_c^- ,
\]
and the reduced functional is defined as
\[
        \mathcal J_c^{\mathrm{loc}}(v):=\mathcal I_c(\Phi_c(v)),
        \qquad v\in \mathcal{O}_c^+,
\]
one has $t_c(v)>0$.  Moreover:
\begin{enumerate}[label=\textup{(\roman*)}]
\item the estimate
\begin{equation}\label{eq:fibre-basic-estimate-final-insert2}
        \mathcal A(\Phi_c(v))+\|\Phi_c(v)^-\|_c^2
        +\|\Phi_c(v)^-\|_{L^2}^2\|v\|_c^2
        \le \mathcal A(v)
\end{equation}
holds.  In particular,
\begin{equation}\label{eq:negative-part-bound-final-insert2}
        \|w_c(v)\|_c^2\le \mathcal A(v)=\frac{2a}{p}\int_K |v|^p\,dx;
\end{equation}
\item Up to a phase factor, the
maximizer is unique;
\item The map $\Phi_c:\mathcal{O}_c^+\to S_c$ and the
reduced functional $ \mathcal J_c^{\mathrm{loc}}(v)$
is of class $C^1$;
\item $u_n=\Phi_c(v_n)$ is a bounded Palais-Smale sequence of
 $\mathcal I_c|_{S_c}$ provided $v_n\in \mathcal{O}_c^+$ is a bounded Palais-Smale sequence
  of $\mathcal J_c^{\mathrm{loc}}$ on $\mathcal{O}_c^+$. Moreover, up to a subsequence, assume $\omega(v_n)\to \omega$,
  then $u_n=\Phi_c(v_n)$ is a bounded Palais-Smale sequence of the functional
  \[
  \I_c^\omega(u):=\I_c(u)-\omega\|u\|_{L^2}^2.
  \]
\end{enumerate}
\end{proposition}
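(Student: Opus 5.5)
The plan is to realise the maximization over the fibre $\mathcal M_c(v)$ through the concave function $\mathcal F_v(w)=\mathcal I_c(a(w)v+w)$ of Lemma \ref{lem:local-fibre-concavity-supercritical}, restricted to the region $\mathcal U_c(v)$. By phase invariance we may always rotate so that the coefficient of $v$ is nonnegative, so $\mathcal M_c(v)=\{a(w)v+w\}$ up to the point $t=0$.

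\textbf{Step 1: a priori localisation.} First observe that $\mathcal I_c(v)=\|v\|_c^2-\mathcal A(v)\ge(1-Cc^{-\eta})\|v\|_c^2>0$ by \eqref{eq:localNonlinearSmall}, so $\sup_{\mathcal M_c(v)}\mathcal I_c\ge \mathcal I_c(v)$. For $u=tv+w\in\mathcal M_c(v)$ with $\mathcal I_c(u)\ge\mathcal I_c(v)$, use $t^2=1-\|w\|_{L^2}^2$ to write
\[
\mathcal I_c(u)-\mathcal I_c(v)=-\|w\|_{L^2}^2\|v\|_c^2-\|w\|_c^2-\mathcal A(u)+\mathcal A(v)\ge0 .
\]
This is exactly \eqref{eq:fibre-basic-estimate-final-insert2}. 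In particular every point at level at least $\mathcal I_c(v)$ has $w\in\mathcal U_c(v)$, so the supremum is attained, if at all, inside $\mathcal U_c(v)$, and $t$ is bounded away from $0$ there. This gives $t_c(v)>0$.

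\textbf{Step 2: existence and uniqueness of the maximizer.} On the closed, bounded, convex set $\mathcal U_c(v)\subset Y_c^-$, the function $\mathcal F_v$ is strictly concave with $\dd^2\mathcal F_v\le-\|\xi\|_c^2$. It is also weakly upper semicontinuous: the quadratic part is concave and continuous, and $\mathcal A$ is weakly continuous by the compact embedding into $L^p(\K)$. Hence a maximizer $w_c(v)$ exists and is unique. By Step 1 it is a global maximizer on $\mathcal M_c(v)$, and uniqueness holds up to the phase factor.

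An alternative route to existence is to take a maximizing sequence, apply Ekeland's principle, and use Lemma \ref{lem:local-fibre-PS-supercritical}(iii) at the level $d\ge\mathcal I_c(v)>0$. Either route gives the Lagrange identity on $Y_c^-\oplus\operatorname{span}\{v\}$, with $\omega>0$ by \eqref{eq:local-omega-positive-supercritical}.

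\textbf{Step 3: $C^1$ regularity.} By Step 1, $w_c(v)$ lies in the interior part $\|w\|_c^2+\|w\|_{L^2}^2\|v\|_c^2\le\mathcal A(v)<2\mathcal A(v)$, so it solves $\dd\mathcal F_v(w)=0$ on $Y_c^-$. The uniform negative definiteness \eqref{eq:local-concavity-estimate-supercritical} of the Hessian lets the implicit function theorem apply to $(v,w)\mapsto \dd\mathcal F_v(w)$. One must parametrize $v$ in the manifold $\mathcal O_c^+$ via local charts and handle the phase by fixing the component along $v$ to be real. This gives $\Phi_c\in C^1$. Then $\mathcal J^{\mathrm{loc}}_c\in C^1$, and by the envelope principle
\[
\dd\mathcal J^{\mathrm{loc}}_c(v)[h]=t_c(v)\bigl(\dd\mathcal I_c(\Phi_c(v))[h]-2\omega\Re(\Phi_c(v),h)_{L^2}\bigr)
\]
for $h\in T_vS_c^+$.

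\textbf{Step 4: Palais--Smale sequences.} Combining the envelope formula with the fibre Lagrange identity, the full constrained derivative of $\mathcal I_c$ at $u_n$ is controlled by $\|\dd\mathcal J^{\mathrm{loc}}_c(v_n)\|/t_c(v_n)$, and $t_c(v_n)$ is bounded below. Boundedness of $u_n$ follows from that of $v_n$ and from \eqref{eq:negative-part-bound-final-insert2}; $\omega_n$ is bounded by testing with $u_n$. Passing to $\omega_n\to\omega$ then gives the statement for $\mathcal I_c^\omega$.

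\textbf{Main obstacle.} The delicate point is Step 3. The implicit function argument needs to be run on the varying spaces $Y_c^-\oplus\operatorname{span}\{v\}$ while quotienting the $S^1$ phase. I would handle this by writing $u=a(w)v+w$ and transporting to a fixed neighbourhood.
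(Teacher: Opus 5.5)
Your Steps 1, 3 and 4 are the paper's arguments:
\begin{itemize}
\item comparing a maximizer with $v$ gives (i) and $t_c(v)>0$;
\item the implicit function theorem, with the coercive Hessian of Lemma~\ref{lem:local-fibre-concavity-supercritical}, gives (iii);
\item the envelope identity for $\dd\mathcal J_c^{\mathrm{loc}}$, with $t_c(v_n)$ bounded below, gives (iv).
\end{itemize}
The genuine difference is how you produce the maximizer. The paper takes a maximizing sequence on $\mathcal M_c(v)$ and makes it a Palais--Smale sequence by Ekeland's principle. It then uses the compactness of Lemma~\ref{lem:local-fibre-PS-supercritical} to get, in one go, a maximizer, the fibre Lagrange identity and $\omega>0$. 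Uniqueness is proved afterwards by strict concavity on $\mathcal U_c(v)$. You instead maximize $\mathcal F_v$ directly over the closed, bounded, convex set $\mathcal U_c(v)$, so existence and uniqueness come out together and the fibre compactness lemma is not needed for existence. The price is that the Lagrange identity needs a separate multiplier argument. That argument works: the maximizer is a global maximum of the even, phase-invariant $\mathcal I_c$ on the unit $L^2$-sphere of $Y_c^-\oplus\operatorname{span}\{v\}$. The sign of $\omega$ then comes from Lemma~\ref{lem:local-fibre-PS-supercritical}(ii) applied to the constant sequence.

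One justification in your Step 2 has to be replaced. The map $w\mapsto\mathcal A(a(w)v+w)$ is not weakly continuous on $Y_c^-$. The reason is that $a(w)=(1-\|w\|_{L^2}^2)^{1/2}$, and $\|w\|_{L^2}$ is not weakly continuous on a noncompact graph, since mass can escape along the half-lines. From $w_n\rightharpoonup w$ you only get $a(w_n)\to\alpha\le a(w)$ along a subsequence. So the compact embedding $Y_c\hookrightarrow L^p(\K)$ alone does not give weak upper semicontinuity of $\mathcal F_v$. The property does follow from what you already have. $\mathcal F_v$ is concave and norm-continuous on the closed convex set $\mathcal U_c(v)$, so its superlevel sets there are closed and convex, hence weakly closed. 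With this fix, or by using the Ekeland alternative you mention (which is exactly the paper's route), your argument goes through.
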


\begin{proof}
      
  For $v\in\mathcal O_c^+$, Lemma~\ref{lem:graphPseudoGN}
gives
\begin{equation}\label{eq:I-v-positive-local-supercritical}
        \mathcal I_c(v)=\|v\|_c^2-\mathcal A(v)
        \ge (1-Cc^{-\eta})\|v\|_c^2
        \ge \frac12\|v\|_c^2>0 .
\end{equation}
Since $v\in\mathcal M_c(v)$, \eqref{eq:I-v-positive-local-supercritical} gives
$\rho_c(v)>0$.  Also, for $u=tv+w\in\mathcal M_c(v)$,
\[
        \mathcal I_c(u)=t^2\|v\|_c^2-\|w\|_c^2-\mathcal A(u)
        \le \|v\|_c^2,
\]
so $\rho_c(v)<+\infty$.
By Ekeland's variational principle there exists a maximizing Palais--Smale sequence
$(u_n)\subset\mathcal M_c(v)$ such that
\[
        \mathcal I_c(u_n)\to\rho_c(v),
        \qquad
        d(\mathcal I_c|_{\mathcal M_c(v)})(u_n)\to0 .
\]
Lemma~\ref{lem:local-fibre-PS-supercritical} yields, up to a subsequence, 
$u_n\to u$ in $Y_c$, $ \mathcal I_c(u)=\rho_c(v)$, and
\[
\dd\I_c(u)[h]-2\omega(v)\Re((u, h)_{L^2}=0, \quad \forall h\in Y_c^-\oplus\operatorname{span}\{v\}.
\]
for some $\omega(v)\in \mathbb R^+.$
 If $u=tv+w$ were such that $t=0$, then
$u\in Y_c^-$, $\|u\|_{L^2}=1$, and
\[
        \mathcal I_c(u)=-\|u\|_c^2-\mathcal A(u)<0,
\]
contradicting $\rho_c(v)>0$.  Therefore $t>0$.
 \begin{enumerate}[label=\textup{(\roman*)}]
                \item
Let $u=tv+w$ be any maximizer.  Comparing it with $v$ gives
\[
        t^2\|v\|_c^2-\|w\|_c^2-\mathcal A(u)
        \ge \|v\|_c^2-\mathcal A(v).
\]
Since $t^2=1-\|w\|_{L^2}^2$, we obtain
\begin{equation}\label{eq:basic-fibre-local-proof-supercritical}
        \mathcal A(u)+\|w\|_c^2+\|w\|_{L^2}^2\|v\|_c^2
        \le \mathcal A(v),
\end{equation}
which is \eqref{eq:fibre-basic-estimate-final-insert2}. 
\item
 Let $u_j=t_jv+w_j$, $j=1,2$, be two maximizers in
$\mathcal M_c(v)$.  By \eqref{eq:basic-fibre-local-proof-supercritical}, both
$w_1$ and $w_2$ belong to $\mathcal U_c(v)$.  For $\theta\in[0,1]$ set
\[
        w_\theta:=(1-\theta)w_1+\theta w_2 .
\]
The set $\mathcal U_c(v)$ is convex.  Thus $w_\theta\in\mathcal U_c(v)$.  If
$w_1\ne w_2$, Lemma~\ref{lem:local-fibre-concavity-supercritical} gives, for every
$\theta\in(0,1)$,
\[
        \mathcal F_v(w_\theta)
        >(1-\theta)\mathcal F_v(w_1)+\theta\mathcal F_v(w_2)
        =\rho_c(v),
\]
which contradicts the definition of $\rho_c(v)$.  Hence $w_1=w_2$, and then
$t_1=t_2$ because $t_j=(1-\|w_j\|_{L^2}^2)^{1/2}$.  This proves uniqueness on
$\mathcal M_c(v)$.
\item
We now prove the $C^1$ regularity.  Work locally on $Y_c^+\setminus\{0\}$ and write
$P(v)=v/\|v\|_{L^2}$.  For $\|w\|_{L^2}<1$, define
\[
        \mathfrak F(v,w)[\xi]
        :=\dd
        \mathcal I_c\big(a(w)P(v)+w\big)
       \left[ \dd a(w)[\xi]P(v)+\xi
        \right],
        \qquad \xi\in Y_c^- .
\]
 Fix
$v_0\in\mathcal O_c^+$ and write
\[
        \Phi_c(v_0)=a(w_0)v_0+w_0 .
\]
The derivative of $\mathfrak F$ with respect to $w$ at $(v_0,w_0)$ is the bilinear form
associated with $\dd^2\mathcal F_{v_0}(w_0)$.  By
Lemma~\ref{lem:local-fibre-concavity-supercritical},
\[
        -D_w\mathfrak F(v_0,w_0)[\xi][\xi]
        \ge \|\xi\|_c^2,
        \qquad \xi\in Y_c^- .
\]
The Lax--Milgram theorem shows that
$D_w\mathfrak F(v_0,w_0):Y_c^-\to(Y_c^-)^*$ is an isomorphism.  Hence the implicit
function theorem gives a $C^1$ map $v\mapsto w_c(v)$ in a neighborhood of $v_0$.  The
local maps agree on overlaps by uniqueness.  Therefore $v\mapsto w_c(v)$ is $C^1$ on
$\mathcal O_c^+$, and so are
\[
        t_c(v)=\bigl(1-\|w_c(v)\|_{L^2}^2\bigr)^{1/2},
        \qquad
        \Phi_c(v)=t_c(v)v+w_c(v).
\]
Thus $\mathcal J_c^{\mathrm{loc}}=\mathcal I_c\circ\Phi_c$ is $C^1$ on $\mathcal O_c^+$.

\item
Let \((v_n)\subset \mathcal O_c^+\) be a bounded Palais--Smale sequence for
\(\mathcal{J}_c^{\rm loc}\) on \(\mathcal O_c^+\), and set
\[
        u_n:=\Phi_c(v_n)=t_n v_n+w_n,\qquad
        t_n:=t_c(v_n),\quad w_n:=w_c(v_n)\in Y_c^- .
\]
We also write
\[
        \omega_n:=\omega(v_n).
\]
By \eqref{eq:fibre-basic-estimate-final-insert2} and Lemma \ref{lem:graphPseudoGN}(ii), we have
\[
        \|w_n\|_c^2+\|w_n\|_{L^2}^2\|v_n\|_c^2
        \le A(v_n)
        \le Cc^{-\eta}\|v_n\|_c^2 ,
\]
which yields \((u_n)\) is bounded in $Y_c$,
and for large $c$,
\[
        \|w_n\|_{L^2}^2\le Cc^{-\eta}<\frac12,
        \qquad
        t_n^2=1-\|w_n\|_{L^2}^2\ge \frac12 .
\]
In particular \(t_n\) is bounded away from zero. 
By (iii),
\[
\dd I_c(u_n)[h]
        -2\omega_n \Re\int_G u_n\cdot h\,dx=0,
        \qquad
        h\in Y_c^-\oplus {\rm span}\{v_n\}.
\]
Define 
\[
        R_n(h):=
        \dd I_c(u_n)[h]
        -2\omega_n \Re\int_G u_n\cdot h\,dx,
        \qquad h\in Y_c .
\]
Then
\[
        R_n(h)=0,
        \qquad
        h\in Y_c^-\oplus {\rm span}\{v_n\}.
\]
We claim that
\[
        \|R_n\|_{Y_c^*}\longrightarrow 0 .
\]
Indeed, Let \(h\in Y_c\). Write \(h=h^++h^-\), with \(h^\pm=P_c^\pm h\), and set
\[
        \alpha_n:=\Re\int_G h^+\cdot v_n\,dx,
        \qquad
        \zeta_n:=h^+-\alpha_n v_n .
\]
Then, it is clear that
\[
        R_n(h)=R_n(\zeta_n).
\]
For \(\zeta_n\in T_{v_n}S_c^+\),
\[
        D\Phi_c(v_n)[\zeta_n]
        =
        t_n\zeta_n+\dot t_n[\zeta_n]v_n
        +Dw_c(v_n)[\zeta_n],
\]
where
\[
        \dot t_n[\zeta_n]v_n+Dw_c(v_n)[\zeta_n]
        \in Y_c^-\oplus {\rm span}\{v_n\}.
\]
Furthermore, since \(\Phi_c(\mathcal O_c^+)\subset S_c\),
\[
        \Re\int_\G u_n\cdot D\Phi_c(v_n)[\zeta_n]\,dx=0.
\]
Thus, using the definition of \(\mathcal{J}_c^{\rm loc}\) and the fact that \(R_n\) vanishes on
\(Y_c^-\oplus {\rm span}\{v_n\}\), we obtain
\[
\begin{aligned}
        d\mathcal{J}_c^{\rm loc}(v_n)[\zeta_n]
        &=
        \dd I_c(u_n)\left[D\Phi_c(v_n)[\zeta_n] \right] \\
        &=
        R_n\big(D\Phi_c(v_n)[\zeta_n]\big)                       \\
        &=
        t_n R_n(\zeta_n).
\end{aligned}
\]
Consequently,
\[
        |R_n(h)|
        =
        |R_n(\zeta_n)|
        \le
        t_n^{-1}
        \|d\mathcal{J}_c^{\rm loc}(v_n)\|_{(T_{v_n}S_c^+)^*}
        \|\zeta_n\|_c .
\]
Since \((v_n)\) is bounded in \(Y_c\), then
\[
\begin{aligned}
        \|\zeta_n\|_c
        &\le \|h^+\|_c+|\alpha_n|\|v_n\|_c                                      \\
        &\le \|h^+\|_c+\|h^+\|_{L^2}\|v_n\|_c                                      \\
        &\le
        \left(1+(mc^2)^{-1/2}\sup_n\|v_n\|_c\right)\|h\|_c .
\end{aligned}
\]
Together with \(t_n\ge 1/\sqrt2\) and
\[
        \|d\mathcal{J}_c^{\rm loc}(v_n)\|_{(T_{v_n}S_c^+)^*}\longrightarrow 0 ,
\]
this proves
\[
        \|R_n\|_{Y_c^*}\longrightarrow 0 .
\]
Now let \(\xi\in T_{u_n}S_c\). Since
\[
        \Re\int_\G u_n\cdot \xi\,dx=0,
\]
we have
\[
        \ \dd\I_c|_{S_c}(u_n)[\xi]
        =
         \dd\I_c(u_n)[\xi]
        =
        R_n(\xi).
\]
Hence
\[
        \|\dd\I_c|_{S_c}(u_n)\|_{(T_{u_n}S_c)^*}
        \le
        \|R_n\|_{Y_c^*}
        \longrightarrow 0 .
\]
 This shows
that \((u_n)\) is a bounded Palais-Smale sequence for \(\I_c|_{S_c}\).
 Passing to a subsequence,
we may suppose that
\[
        \omega_n\to \omega .
\]
For every \(h\in Y_c\),
\[
\begin{aligned}
        \big|\dd\I_c^\omega(u_n)[h]\big|
        &=
        \left|
         \dd\I_c(u_n)[h]
        -2\omega \Re\int_\G u_n\cdot h\,dx
        \right|                                                   \\
        &\le
        |R_n(h)|
        +2|\omega_n-\omega|\,\|u_n\|_{L^2}\|h\|_{L^2}                       \\
        &\le
        \left(
        \|R_n\|_{Y_c^*}
        +C|\omega_n-\omega|
        \right)\|h\|_c .
\end{aligned}
\]
Therefore
\[
        \dd\I_c^\omega(u_n)\to 0
        \quad\text{in }Y_c^* .
\]
This proves (iv).

\end{enumerate}
\end{proof}

}

\begin{lemma}\label{lem:belowRestLocalExcess}
Let $4<p<6$.  There exist $C>0$ and $c_0>0$ such that, for every $c\ge c_0$ and every $v\in\mathcal O_c^+$,
\begin{equation}\label{eq:belowRestLocalAssumption}
\mathcal \mathcal{J}_c^{\rm loc}(v)<mc^2
\end{equation}
implies
\begin{equation}\label{eq:belowRestLocalExcess}
T_c(v)=\|v\|_c^2-mc^2\le C.
\end{equation}
Moreover, 
$$
\|v\|_{H^{1/2}}\leq C.
$$
\end{lemma}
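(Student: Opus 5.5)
The plan is to compare the reduced value with the energy of the test point $v$ itself, and then use the local modified Gagliardo--Nirenberg inequality to absorb the nonlinear term into $T_c(v)$. Since $v\in\mathcal M_c(v)$, the definition of $\rho_c$ gives
\[
mc^2>\mathcal J_c^{\rm loc}(v)\ge \mathcal I_c(v)=\|v\|_c^2-\mathcal A(v)=mc^2+T_c(v)-\mathcal A(v),
\]
using $\|v\|_{L^2}=1$. Hence
\[
T_c(v)<\mathcal A(v)\le \frac{2a}{p}\int_\G|v|^p\,dx .
\]

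Next I would bound the right-hand side by Lemma~\ref{lem:graphPseudoGN}(i), which applies because $v\in\mathcal O_c$. Using $\|\Pi_cv\|_{L^2}\le 1$ and $\|v_c^\perp\|_{L^2}\le 1$, this gives
\[
T_c(v)\le C\bigl(1+T_c(v)^\theta+c^{-\eta}T_c(v)\bigr),\qquad \theta=\frac{p-2}{4}\in\Bigl(\frac12,1\Bigr),
\]
with $C$ depending on $a$ but not on $c$. For $c\ge c_0$ large we have $Cc^{-\eta}\le \tfrac12$, so the last term is absorbed into the left side. This leaves $T_c(v)\le 2C(1+T_c(v)^\theta)$. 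Since $\theta<1$ (because $p<6$), Young's inequality $T^\theta\le \epsilon T+C_\epsilon$ yields $T_c(v)\le C$ uniformly in $c$ and $v$.

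For the $H^{1/2}$ bound I would invoke Lemma~\ref{lem:graphs} in the form of Lemma~\ref{lemma2.1}, which is uniform in $c>1$:
\[
C\|v\|_1^2\le \|v\|_c^2-mc^2\|v\|_{L^2}^2+\|v\|_{L^2}^2=T_c(v)+1\le C.
\]
The equivalence $\|v\|_1\simeq\|v\|_{H^{1/2}}$ then gives $\|v\|_{H^{1/2}}\le C$.

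The main point requiring care is the absorption step. It relies on the local inequality \eqref{eq:localModifiedGraphGN}, valid only on $\mathcal O_c$, where the supercritical high-frequency term carries the small factor $c^{-\eta}$. It also relies on the threshold-kernel term $\|\Pi_cv\|_{L^2}^p$ being bounded merely by a constant, which holds because $\mathcal N_c$ is finite-dimensional uniformly in $c$ (Lemma~\ref{lem:thresholdFinite}). Beyond that, the argument only uses $\theta<1$, i.e.\ $p<6$.
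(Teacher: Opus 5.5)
Your proposal is correct and follows the paper's proof essentially step for step. You use $\mathcal J_c^{\rm loc}(v)\ge \mathcal I_c(v)=mc^2+T_c(v)-\mathcal A(v)$, bound $\mathcal A(v)$ by \eqref{eq:localModifiedGraphGN} on $\mathcal O_c$, absorb the $c^{-\eta}T_c(v)$ term for large $c$, use $\theta<1$ to get $T_c(v)\le C$, and then obtain the $H^{1/2}$ bound from Lemma~\ref{lemma2.1}. The extra lemma label you cite alongside Lemma~\ref{lemma2.1} does not exist in the paper and is not needed.
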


\begin{proof}
Since 
\[
\mathcal \mathcal{J}_c^{\rm loc}(v)\ge \mathcal I_c(v)
=mc^2+T_c(v)-\frac{2a}{p}\int_\K |v|^p\dd x.
\]
Combining this inequality with \eqref{eq:belowRestLocalAssumption} and \eqref{eq:localModifiedGraphGN} gives
\[
0>T_c(v)-C\bigl(1+T_c(v)^\theta+c^{-\eta}T_c(v)\bigr).
\]
For $c$ large, $1-Cc^{-\eta}\ge1/2$.  Since $\theta<1$, then $T_c(v)\le C$, with $C$ independent of $c$.
By Lemma \ref{lemma2.1}, we get
$$
\|v\|_{H^{1/2}}\leq C.
$$
\end{proof}

Because $\mathcal \mathcal{J}_c^{\rm loc}$ is only defined on $\mathcal O_c^+$, we extend it by truncation. Let $\tau\in C^\infty([0,+\infty),[0,1])$ be nonincreasing, with
\[
\tau(t)=1\quad(0\le t\le1/2),
\qquad
\tau(t)=0\quad(t\ge3/4),
\]
and set $\tau_c(r)=\tau(r/c^s)$. For $v\in S_c^+$ define
\[
\Phi_{c,T}(v)=
\begin{cases}
\tau_c(\|v\|_c)\Phi_c(v),& \|v\|_c<c^s,\\[1mm]
0,& \|v\|_c\ge c^s,
\end{cases}
\]
where the first line is well defined by Proposition \ref{prop:localReductionSuper}. Since $\tau_c$ vanishes near $\|v\|_c=c^s$, this is a $C^1$ map. Put
\begin{equation}\label{eq:GammaTGraph}
\Gamma_{c,T}(v):=
\frac{\Phi_{c,T}(v)+(1-\tau_c(\|v\|_c))v}
{\|\Phi_{c,T}(v)+(1-\tau_c(\|v\|_c))v\|_{L^2}}
\end{equation}
and
\begin{equation}\label{eq:JTGraph}
\mathcal J_{c,T}(v)
:=\|\Gamma_{c,T}(v)^+\|_c^2-\|\Gamma_{c,T}(v)^-\|_c^2
-\frac{2a}{p}\tau_c(\|v\|_c)^2\int_\K |\Phi_c(v)|^p\dd x,
\end{equation}
with the convention that the last term is zero when $\|v\|_c\ge c^s$. Then $\mathcal J_{c,T}\in C^1(S_c^+,\R)$ is even and satisfies
\begin{equation}\label{eq:JTproperties}
\mathcal J_{c,T}(v)=\mathcal \mathcal{J}_c^{\rm loc}(v)
\quad\text{if }\|v\|_c<c^s/2,
\qquad
\mathcal J_{c,T}(v)=\|v\|_c^2
\quad\text{if }\|v\|_c\ge3c^s/4.
\end{equation}

Similar to Lemma \ref{lem:FWgraph}, we can conclude the following result.

\begin{lemma}\label{lem:FWgraph2}
Let $A\subset S_\infty$ be a compact set, we define
\begin{equation}
\iota(f):=\begin{pmatrix}f\\0\end{pmatrix},
\qquad f\in A,
\end{equation}
and
\begin{equation}
z_c(f):=P_c^+\iota(f),
\qquad
\Theta_c(f):=\frac{z_c(f)}{\|z_c(f)\|_{L^2}}\in S_c^+ .
\end{equation}
Then, 
\begin{equation}
\mathcal J_{c, T}(\Theta_c(f))
\le mc^2+\mathcal I_\infty(f)+o_c(1)
\qquad\text{uniformly for }f\in A.
\end{equation}
\end{lemma}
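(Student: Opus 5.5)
The plan is to reduce the statement to Lemma \ref{lem:FWgraph}. We show that, for $c$ large, every test state $\Theta_c(f)$ with $f\in A$ lies in the region where the truncation is inactive. There \eqref{eq:JTproperties} gives $\mathcal J_{c,T}(\Theta_c(f))=\mathcal J_c^{\rm loc}(\Theta_c(f))$, and $\mathcal J_c^{\rm loc}$ is built exactly like $\mathcal J_c$, by maximization over the fibre $\mathcal M_c(\cdot)$, now via Proposition \ref{prop:localReductionSuper}. Once we are in this regime, the estimates of Lemma \ref{lem:FWgraph} carry over verbatim.

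First, $A$ is compact in $H^1_{\mathrm K}(\G)$, hence bounded there. Identity \eqref{eq:311} from the proof of Lemma \ref{lem:FWgraph} uses only spectral calculus for $L^{\mathrm K}_\G$ and dominated convergence, uniformly on $H^1$-bounded sets. It therefore gives
\[
\|\Theta_c(f)\|_c^2=mc^2+\tfrac1{2m}\|f'\|_{L^2}^2+o_c(1)\le mc^2+C
\]
uniformly for $f\in A$. Since $s>1$, we get $\|\Theta_c(f)\|_c\le C c< c^s/2$ for all $c\ge c_0$. Consequently $\Theta_c(f)\in\mathcal O_c^+$ and $\tau_c(\|\Theta_c(f)\|_c)=1$, so by \eqref{eq:JTproperties}
\[
\mathcal J_{c,T}(\Theta_c(f))=\mathcal J_c^{\rm loc}(\Theta_c(f))=\mathcal I_c(\Phi_c(\Theta_c(f))).
\]

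Next we repeat the final part of the proof of Lemma \ref{lem:FWgraph} with Proposition \ref{prop:localReductionSuper} in place of Proposition \ref{prop:reduction}. Write $u_c=\Phi_c(\Theta_c(f))=t_c\Theta_c(f)+w_c$. Estimate \eqref{eq:fibre-basic-estimate-final-insert2} gives
\[
\|w_c\|_c^2+\|w_c\|_{L^2}^2\|\Theta_c(f)\|_c^2\le\mathcal A(\Theta_c(f)).
\]
The uniform $H^1$ convergence $\Theta_c(f)\to\iota(f)$ and \eqref{eq:312} bound the right-hand side uniformly. Hence $\|w_c\|_c\le C$ and $\|w_c\|_{L^2}^2\le C/(mc^2)$. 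Corollary \ref{lem:GNDirac} then yields $\|w_c\|_{L^p}^p=o_c(1)$, so $u_c\to\iota(f)$ in $L^p(\K)$ uniformly and \eqref{eq:313} holds. Finally, using $t_c^2\le1$ and dropping $-\|w_c\|_c^2$, we obtain
\[
\mathcal I_c(u_c)\le \|\Theta_c(f)\|_c^2-\frac{2a}{p}\int_\K|u_c|^p\,dx= mc^2+\mathcal I_\infty(f)+o_c(1),
\]
uniformly on $A$.

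The only genuinely new point, and the step most likely to need care, is the uniformity of the localization $\|\Theta_c(f)\|_c<c^s/2$. This requires the uniform error terms in \eqref{eq:37}--\eqref{eq:311} on $A$. Dominated convergence alone gives pointwise convergence in $f$, but uniformity follows from compactness of $A$ in $H^1$: the spectral measures $\mu_f$ depend continuously on $f$, and the integrands are dominated by $\lambda/(2m)$ and $(1+\lambda)$. Alternatively, a crude bound suffices here, namely $\|\Theta_c(f)\|_c^2\le (mc^2+C\|f'\|^2)/\|z_c(f)\|_{L^2}^2$ with $\|z_c(f)\|_{L^2}^2\ge1/2$, which already gives $\|\Theta_c(f)\|_c\le Cc\ll c^s$ without any limit.
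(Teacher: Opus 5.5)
Your proposal is correct and follows the route the paper intends. The paper gives no proof beyond ``Similar to Lemma~\ref{lem:FWgraph}'', having already noted after \eqref{eq:OcGraph} that the test states lie in $\mathcal O_c^+$ because $s>1$; your argument fills this in the expected way: show $\|\Theta_c(f)\|_c\le Cc<c^s/2$, so that $\mathcal J_{c,T}(\Theta_c(f))=\mathcal J_c^{\rm loc}(\Theta_c(f))$ by \eqref{eq:JTproperties}, and then rerun the proof of Lemma~\ref{lem:FWgraph} with Proposition~\ref{prop:localReductionSuper} in place of Proposition~\ref{prop:reduction}. Your crude bound $\|\Theta_c(f)\|_c^2\le 2\bigl(mc^2+C\|f'\|_{L^2}^2\bigr)$ and your remark that uniformity of the asymptotics comes from compactness of $A$ (tightness of the spectral measures) rather than from dominated convergence alone are precisions the paper leaves implicit.
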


\begin{lemma}\label{lem:truncLocalizationGraph} There exist constants $C>0$ and $c_0>0$ such that, if $c\ge c_0$ and
\begin{equation}\label{eq:JTbelowRest}
\mathcal J_{c,T}(v)<mc^2,
\qquad v\in S_c^+,
\end{equation}
then
\begin{equation}\label{eq:truncInactiveConclusion}
\|v\|_c<c^s/2,
\qquad
T_c(v)=\|v\|_c^2-mc^2\le C.
\end{equation}
  Moreover, $\mathcal J_{c,T}$ satisfies the Palais--Smale condition at every level $d<mc^2$, and every critical point of $\mathcal J_{c,T}$ at such a level is a critical point of $\mathcal \mathcal{J}_c^{\rm loc}$.
\end{lemma}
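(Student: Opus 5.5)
The argument has three parts: a lower bound for $\mathcal J_{c,T}$ that rules out the regions where the cutoff is active, the localization estimate, and then the Palais--Smale and criticality statements. Throughout, write $r=\|v\|_c$.

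\textbf{Lower bound on $\mathcal J_{c,T}$.} We bound $\mathcal J_{c,T}(v)$ from below in each of the three regions $r\ge 3c^s/4$, $c^s/2\le r<3c^s/4$ and $r<c^s/2$.

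If $r\ge 3c^s/4$, then \eqref{eq:JTproperties} gives $\mathcal J_{c,T}(v)=r^2\ge \tfrac9{16}c^{2s}$. Since $s>1$, this exceeds $mc^2$ for large $c$, so such $v$ cannot satisfy \eqref{eq:JTbelowRest}.

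In the intermediate region we use Proposition~\ref{prop:localReductionSuper}(i) and Lemma~\ref{lem:graphPseudoGN}(ii). They give $\|w_c(v)\|_c^2\le \mathcal A(v)\le Cc^{-\eta}r^2$, $\|w_c(v)\|_{L^2}^2\le Cc^{-\eta}$ and $\mathcal A(\Phi_c(v))\le Cc^{-\eta}r^2$. Write $\tau=\tau_c(r)$. The numerator of $\Gamma_{c,T}(v)$ is $N=(\tau t_c+1-\tau)v+\tau w_c$, with positive part $(\tau t_c+1-\tau)v$ and coefficient in $[t_c,1]$, so $1-Cc^{-\eta}\le\cdot\le1$. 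Hence $\|N\|_{L^2}^2\le 1+Cc^{-\eta}$. Dividing,
\[
\|\Gamma_{c,T}(v)^+\|_c^2-\|\Gamma_{c,T}(v)^-\|_c^2-\tfrac{2a}{p}\tau^2\!\int_\K|\Phi_c(v)|^p\ge (1-Cc^{-\eta})r^2 .
\]
This yields $\mathcal J_{c,T}(v)\ge (1-Cc^{-\eta})r^2\ge \tfrac14(1-Cc^{-\eta})c^{2s}>mc^2$ for $c$ large, again using $s>1$.

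Together the two cases show that \eqref{eq:JTbelowRest} forces $r<c^s/2$. Then $\mathcal J_{c,T}=\mathcal J_c^{\rm loc}$, and Lemma~\ref{lem:belowRestLocalExcess} gives $T_c(v)\le C$, which is \eqref{eq:truncInactiveConclusion}.

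\textbf{Palais--Smale condition.} Let $(v_n)\subset S_c^+$ be a (PS)$_d$ sequence for $\mathcal J_{c,T}$ with $d<mc^2$. For large $n$ we have $\mathcal J_{c,T}(v_n)<mc^2$, so by the first step $\|v_n\|_c<c^s/2$ and $T_c(v_n)\le C$. The set $\{\|v\|_c<c^s/2\}$ is open in $S_c^+$ and $\mathcal J_{c,T}$ coincides with $\mathcal J_c^{\rm loc}$ there. Hence $d\mathcal J_{c,T}(v_n)=d\mathcal J_c^{\rm loc}(v_n)$, and $(v_n)$ is a bounded (PS) sequence for $\mathcal J_c^{\rm loc}$.

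By Proposition~\ref{prop:localReductionSuper}(iv), $u_n=\Phi_c(v_n)$ is a bounded (PS) sequence for $\mathcal I_c^{\omega}$ with $\omega_n\to\omega$. The identity \eqref{eq:energyOmegaRelation} remains valid, since it only uses testing with $u_n$. It gives $\omega\le d<mc^2$, while Lemma~\ref{lem:local-fibre-PS-supercritical}(ii) gives $\omega>0$.

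From here we repeat the proof of Lemma~\ref{lem:PSbelow} verbatim. We extract a weak limit, use compactness of $Y_c\hookrightarrow L^p(\K)$, and obtain $\|u_n^\pm-u^\pm\|_c\to0$ from $\omega<mc^2$. Thus $u_n\to u$ in $Y_c$.

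Next we recover convergence of $v_n$. We have $u_n^+=t_nv_n$ with $t_n\ge1/\sqrt2$ by the bound on $\|w_n\|_{L^2}$, and after passing to a subsequence $t_n\to t>0$. Then $v_n=u_n^+/t_n\to u^+/t$ strongly in $Y_c$.

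\textbf{Criticality and the main obstacle.} A critical point at a level $d<mc^2$ lies in the open region $\|v\|_c<c^s/2$ by the first step. There the two functionals agree, so the point is critical for $\mathcal J_c^{\rm loc}$.

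The step I expect to need the most care is the lower bound in the intermediate band $c^s/2\le r<3c^s/4$. There $\Gamma_{c,T}$ mixes $v$ and $\Phi_c(v)$, and one must check that normalizing by $\|N\|_{L^2}$ and the partially active nonlinear term cost only $O(c^{-\eta})r^2$. If the algebra becomes awkward, the fallback is to bound crudely: $\|\Gamma^+\|_c^2\ge (1-Cc^{-\eta})r^2$, $\|\Gamma^-\|_c^2\le Cc^{-\eta}r^2$, and nonlinear term $\le Cc^{-\eta}r^2$. This is enough, since $r^2\gtrsim c^{2s}\gg mc^2$.
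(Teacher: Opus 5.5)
Your argument is correct. For the Palais--Smale and criticality parts you follow the paper: you pass to $\mathcal J_c^{\rm loc}$ on the open set $\{\|v\|_c<c^s/2\}$ and rerun Lemma~\ref{lem:PSbelow} via Proposition~\ref{prop:localReductionSuper}(iv). You also make explicit the step $v_n=u_n^+/t_n\to u^+/t$, which the paper leaves implicit.

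The genuine difference is in the transition band $c^s/2\le\|v\|_c<c^s$.

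\emph{The paper's route.} It asserts $\mathcal J_{c,T}(v)\ge \mathcal J_c^{\rm loc}(v)\|v\|_c^2/(\mathcal J_c^{\rm loc}(v)+\|v\|_c^2)\ge\tfrac12\mathcal J_c^{\rm loc}(v)$ and deduces $\mathcal I_c(v)<2mc^2$. It then invokes the modified Gagliardo--Nirenberg bound \eqref{eq:localModifiedGraphGN} to get $T_c(v)\le Cc^2$, i.e.\ $\|v\|_c\le Cc$, which contradicts $\|v\|_c\ge c^s/2$.

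\emph{Your route.} You expand the numerator $(\tau t_c+1-\tau)v+\tau w_c$ of $\Gamma_{c,T}(v)$ and bound each term of \eqref{eq:JTGraph} separately. For this you use the fibre estimate \eqref{eq:fibre-basic-estimate-final-insert2} and the crude bound \eqref{eq:localNonlinearSmall}. The result is the direct coercivity estimate $\mathcal J_{c,T}(v)\ge(1-Cc^{-\eta})\|v\|_c^2\ge\tfrac14(1-Cc^{-\eta})c^{2s}>mc^2$.

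\emph{Comparison.} Your route can be checked line by line from the definition of the truncation, whereas the paper's first inequality is stated without a derivation from \eqref{eq:JTGraph}. Your route also needs neither the threshold projection nor the modified inequality at this stage. Those enter only through Lemma~\ref{lem:belowRestLocalExcess}, for the bound $T_c(v)\le C$ in the inner region, exactly as in the paper.

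One detail to record when you write it up: you need the lower bound $\|N\|_{L^2}^2\ge t_c^2\ge\tfrac12$. This is what gives $\|\Gamma_{c,T}(v)^-\|_c^2\le 2\|w_c\|_c^2\le Cc^{-\eta}\|v\|_c^2$, and your displayed computation uses it only implicitly.
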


\begin{proof}

If $\|v\|_c\ge c^s$, then by the definition of the truncation,
\[
\mathcal J_{c,T}(v)=\|v\|_c^2\ge c^{2s}>mc^2
\]
for large $c$, because $s>1$.  Hence every $v$ satisfying \eqref{eq:JTbelowRest} must obey $\|v\|_c<c^s$.

Assume now, by contradiction, that $c^s/2\le\|v\|_c<c^s$. The truncation gives
\begin{equation}\label{eq:JTtransitionLowerRevised}
\mathcal J_{c,T}(v)
\ge \frac{\mathcal \mathcal{J}_c^{\rm loc}(v)\|v\|_c^2}
        {\mathcal \mathcal{J}_c^{\rm loc}(v)+\|v\|_c^2}
\ge \frac12\mathcal \mathcal{J}_c^{\rm loc}(v),
\end{equation}
where we used $\mathcal \mathcal{J}_c^{\rm loc}(v)\le\|v\|_c^2$. Therefore \eqref{eq:JTbelowRest} implies
\begin{equation}\label{eq:JlocTwoRest}
\mathcal \mathcal{J}_c^{\rm loc}(v)<2mc^2.
\end{equation}
Since $v\in\mathcal M_c(v)$, we have $\mathcal \mathcal{J}_c^{\rm loc}(v)\ge\mathcal I_c(v)$.  Thus \eqref{eq:JlocTwoRest} gives
\[
\mathcal I_c(v)-mc^2<mc^2.
\]
Writing $T=T_c(v)$ and using Lemma \ref{lem:graphPseudoGN},
\begin{align*}
\mathcal I_c(v)-mc^2
&=T-\frac{2a}{p}\int_\K |v|^p\dd x\\
&\ge T-C\left(1+T^\theta+c^{-\frac{p-2}{2}}T^{2\theta}\right).
\end{align*}
Because $\|v\|_c<c^s$, we have $T\le c^{2s}$ and therefore, as in \eqref{eq:localModifiedGraphGN},
\[
c^{-\frac{p-2}{2}}T^{2\theta}
\le c^{-\eta}T.
\]
Hence
\begin{equation}\label{eq:TBoundTransition}
(1-Cc^{-\eta})T-C T^\theta-C<mc^2.
\end{equation}
Since $\theta<1$, \eqref{eq:TBoundTransition} implies $T\le Cc^2$.  Consequently
\[
\|v\|_c^2=mc^2+T\le Cc^2,
\qquad\text{and hence}\qquad
\|v\|_c\le Cc.
\]
As $s>1$, for large $c$ this gives $\|v\|_c<c^s/2$, contradicting the transition assumption.  On this ball the truncation is inactive, so
\[
\mathcal J_{c,T}(v)=\mathcal \mathcal{J}_c^{\rm loc}(v)<mc^2.
\]
The excess bound in \eqref{eq:truncInactiveConclusion} follows from Lemma \ref{lem:belowRestLocalExcess}.

Let $(v_n)\subset S_c^+$ be a Palais--Smale sequence for $\mathcal J_{c,T}$ at a level $d<mc^2$.  The preceding localization gives, for all large $n$,
\[
\|v_n\|_c<c^s/2,
\qquad
\mathcal J_{c,T}(v_n)=\mathcal \mathcal{J}_c^{\rm loc}(v_n),
\qquad
T_c(v_n)\le C.
\]
Repeat the same argument in Lemma \ref{lem:PSbelow}, we obtain the compactness of $v_n$. 

\end{proof}

\begin{proof}[\textbf{Proof of Theorem \ref{thm:1.1}}]
For $2<p\le 4$, $j\in\N$ define
\begin{equation}\label{eq:GammaDirac}
\Gamma_{c,j}:=\{A\subset S_c^+:A\text{ is closed, symmetric, and }\gamma(A)\ge j\}
\end{equation}
and
\begin{equation}\label{eq:DiracMinimax}
d_{c,j}:=\inf_{A\in\Gamma_{c,j}}\sup_{v\in A}\mathcal J_c(v).
\end{equation}
 By the definition of $e_j^a$, for $\varepsilon>0$, choose $A_j\in\Gamma_{\infty,j}$ such that
\begin{equation}\label{eq:AjChoice}
\sup_{f\in A_j}\mathcal I_\infty(f)\le e_j^a+\varepsilon.
\end{equation}
 Then
\[
A_{c,j}:=\Theta_c(A_j)\in \Gamma_{c,j}.
\]
 Lemma \ref{lem:FWgraph} yields
\begin{align*}
d_{c,j}
&\le\sup_{v\in A_{c,j}}\mathcal J_c(v)+\varepsilon\\
&\le mc^2+\sup_{f\in A_j}\mathcal I_\infty(f)+\varepsilon+o_c(1)\\
&\le mc^2+e_j^a+\varepsilon+o_c(1).
\end{align*}
Combining this with
Lemma \eqref{lem:NLSnegative} gives
\[
d_{c,j}<mc^2,
\]
when $c$ is sufficiently large. Lemma \ref{lem:PSbelow} gives the Palais-Smale condition for $\mathcal J_c$ at each of these levels. 
The Minimax theorem, Theorem \ref{minimax}, therefore gives at
 least $j$ distinct pairs of critical points, $v_{c,1}, \cdots v_{c,j}$  of $\mathcal J_c$ on $S_c^+$.
  Proposition \ref{prop:reduction} shows that $u_{c,j}=\Phi_c(v_{c,j})$ are critical points of $\mathcal I_c|_{S_c}$. Thus there exist Lagrange multipliers $\omega_{c,j}\in (0,mc^2)$ such that
\[
\mathscr{D}_cu_{c,j}-\omega_{c,j}u_{c,j}=a\chi_\K|u_{c,j}|^{p-2}u_{c,j},
\qquad \|u_{c,j}\|_{L^2}^2=1.
\]
For mass-supercritical case, namely, $4<p<6$, repeat the same argument for $\mathcal{J}_{c,T}$, we obtain at
 least $j$ distinct pairs of critical points of $\mathcal J_{c,T}$ on $S_c^+$.
By Lemma \ref{lem:truncLocalizationGraph} and Proposition \ref{prop:localReductionSuper}, we can obtain the multiplicity 
results for normalized solutions to \eqref{dirac}.
\end{proof}

\section{Nonrelativistic Limit of the Obtained Solutions}\label{sec:NR}
      For each $j\in \N$, let $u_{c,j}=(f_{c,j},g_{c,j})^T$ be a sequence of normalized Dirac solutions obtained by Theorem \ref{thm:1.1}.

\begin{lemma}\label{H1b}
      $$
      \sup_{c>c_0}\|u_{c,j}\|_{H^1}<\infty.
      $$
\end{lemma}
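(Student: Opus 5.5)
The plan is to get the $H^1$ bound directly from the equation, using the identity \eqref{eq:5}: $\|\D_c w\|_{L^2}^2=m^2c^4\|w\|_{L^2}^2+c^2\|w'\|_{L^2}^2$ for $w\in\operatorname{dom}(\D_c)$. The only input needed beforehand is a $c$-uniform $H^{1/2}$ bound on $u_{c,j}$, which controls the nonlinearity on $\K$.

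\emph{Step 1: uniform $H^{1/2}$ bound.} Write $u_{c,j}=\Phi_c(v_{c,j})=t\,v_{c,j}+w$ with $w\in Y_c^-$. From the proof of Theorem \ref{thm:1.1}, the critical level satisfies $\mathcal J_c(v_{c,j})=d_{c,j}<mc^2$ (respectively $\mathcal J_{c,T}$ in the supercritical case).
\begin{itemize}
\item For $4<p<6$, Lemma \ref{lem:truncLocalizationGraph} and Lemma \ref{lem:belowRestLocalExcess} give $T_c(v_{c,j})\le C$ and $\|v_{c,j}\|_{H^{1/2}}\le C$.
\item For $2<p\le4$, I would repeat the argument of Lemma \ref{lem:belowRestLocalExcess}. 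Start from $mc^2>\mathcal J_c(v)\ge \I_c(v)=mc^2+T_c(v)-\mathcal A(v)$ and bound $\mathcal A(v)$ by Lemma \ref{lem:thresholdMGN}. Here $\theta=(p-2)/4\le\frac12<1$ and $2\theta\le1$. Since $\|v\|_{L^2}=1$ and $c^{-(p-2)/2}\to0$, this yields $T_c(v)\le C+CT_c(v)^\theta+o_c(1)T_c(v)$, hence $T_c(v)\le C$. Lemma \ref{lemma2.1} then gives $\|v\|_{H^{1/2}}\le C$.
\end{itemize}
For the negative part, Proposition \ref{prop:reduction}(i) (or Proposition \ref{prop:localReductionSuper}(i)) gives $\|w\|_c^2\le\mathcal A(v)\le C$, since $\mathcal A(v)\le C\|v\|_{H^{1/2}}^p$ by \eqref{eq:intro-embedding}. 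By \eqref{3}, $\|w\|_{H^{1/2}}^2\le Cc^{-1}\|w\|_c^2\le C$. Hence $\sup_c\|u_{c,j}\|_{H^{1/2}}<\infty$, and by \eqref{eq:intro-embedding}, $\int_\K|u_{c,j}|^r\,dx\le C_r$ uniformly in $c$ for every $r\in[2,\infty)$.

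\emph{Step 2: regularity and the identity.} Set $N:=a\chi_\K|u|^{p-2}u$. By Step 1, $\|N\|_{L^2}^2=a^2\int_\K|u|^{2p-2}\,dx\le C$. Since $u$ is a weak solution of $\D_cu=\omega u+N$ with right-hand side in $L^2$, self-adjointness gives $u\in\operatorname{dom}(\D_c)=H^1_{\mathrm K}\oplus H^1_{\mathrm D}$. Applying \eqref{eq:5} with $\|u\|_{L^2}=1$,
\[
c^2\|u'\|_{L^2}^2=\|\omega u+N\|_{L^2}^2-m^2c^4
=(\omega^2-m^2c^4)+2\omega\, a\int_\K|u|^p\,dx+\|N\|_{L^2}^2 .
\]

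\emph{Step 3: conclusion.} From the proof of Theorem \ref{thm:1.1}, $\omega=\omega_{c,j}\in(0,mc^2)$. So the first term is negative and $2\omega\le 2mc^2$, which gives
\[
\|u_{c,j}'\|_{L^2}^2\le 2ma\int_\K|u_{c,j}|^p\,dx+c^{-2}\|N\|_{L^2}^2\le C
\]
uniformly for $c>c_0$. Together with $\|u_{c,j}\|_{L^2}=1$, this proves the lemma.

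\emph{Main obstacle.} The delicate point is Step 1, namely the $c$-independent control of the nonlinear term through $T_c(v)$. In the subcritical case this control is not stated explicitly in the paper and must be extracted from Lemma \ref{lem:thresholdMGN} and Lemma \ref{lemma2.1} as above. A secondary point is checking $|\omega_{c,j}|<mc^2$, which is what makes the term $\omega^2-m^2c^4$ harmless; this comes from \eqref{eq:omegaUpper} together with positivity of $\omega$ from the reduction.
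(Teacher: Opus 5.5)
Your proof is correct. For $4<p<6$ it coincides with the paper's argument: a uniform $H^{1/2}$ bound from Lemma~\ref{lem:truncLocalizationGraph}, Sobolev embedding on $\K$, and then the identity $m^2c^4+c^2\|u'\|_{L^2}^2=\|\omega u+N\|_{L^2}^2$ with $\omega_{c,j}\in(0,mc^2)$. Your Steps 2--3 are exactly the paper's estimate \eqref{aa}.

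The routes differ only for $2<p\le4$.
\begin{itemize}
\item The paper never passes through $H^{1/2}$ there. It closes \eqref{aa} directly with the $H^1$ Gagliardo--Nirenberg inequality. The two nonlinear terms grow at most like $\|u'\|_{L^2}^{(p-2)/2}$ and, after dividing by $c^2$, like $c^{-2}\|u'\|_{L^2}^{p-2}$. Both are absorbed when $p\le4$, including at $p=4$ thanks to the factor $c^{-2}$. That argument uses only the equation, $\|u\|_{L^2}=1$ and $\omega\in(0,mc^2)$, so it is shorter.
\item You instead use the level information $\mathcal J_c(v_{c,j})<mc^2$ together with the modified inequality of Lemma~\ref{lem:thresholdMGN} to obtain $T_c(v_{c,j})\le C$. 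This treats all $p\in(2,6)$ in one scheme, and it also shows that the kinetic excess stays bounded.
\end{itemize}

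One small point to add: Lemma~\ref{lem:thresholdMGN} is stated in Section~\ref{sec:supercritical}, under the standing hypothesis $4<p<6$. You should remark that its proof never uses $p>4$, so it is available for $p\le4$.

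Your write-up also makes explicit two steps the paper leaves implicit:
\begin{itemize}
\item the passage from the bound on $v_{c,j}$ to the bound on $u_{c,j}=\Phi_c(v_{c,j})$, via $\|w\|_c^2\le\mathcal A(v)$ and \eqref{3};
\item the regularity $u_{c,j}\in\operatorname{dom}(\mathscr D_c)$, which is needed to apply \eqref{eq:5}.
\end{itemize}
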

\begin{proof}
Since
\begin{equation}\label{eq:DnormEstimate}
\|\D_c u_{c,j}\|_{L^2}^2=m^2c^4\|u_{c,j}\|_{L^2}^2+c^2\|u_{c,j}'\|_{L^2}^2
=\|\omega_{c,j}u_{c,j}+a\chi_\K|u_{c,j}|^{p-2}u_{c,j}\|_{L^2}^2.
\end{equation}
and $\|u_{c,j}\|_{L^2}=1$ and $\omega_{c,j}\in (0, mc^2)$,  we get
\begin{equation}\label{aa}
   c^2\|u_{c,j}'\|_{L^2}^2\le C\left(1+c^2\|u_{c,j}\|_{L^p(\K)}^p+\|u_{c,j}\|_{L^{2p-2}(\K)}^{2p-2}\right).     
\end{equation}
If $2<p\le 4$, Gagliardo--Nirenberg inequality implies that the right-hand side is bounded by $C+C\|u_{c,j}'\|_{L^2}^\theta$ with some $\theta\le 2$. Hence $(u_{c,j})$ is bounded in $H^1(\G,\C^2)$.
If $4<p<6$, Lemma \ref{lem:truncLocalizationGraph} implies 
     $$
      \sup_{c>c_0}\|u_{c,j}\|_{H^{1/2}}<\infty.
      $$
      Consequently, it follows from the Sobolev embedding $H^{1/2}(\G)\hookrightarrow L^q(\G)$ for each $q>2$ that
           $$
      \sup_{c>c_0}\|u_{c,j}\|_{L^q}<\infty.
      $$
      Combining this with \eqref{aa}, we get
        $$
      \sup_{c>c_0}\|u_{c,j}\|_{H^{1}}<\infty.
      $$
\end{proof}

\begin{lemma}\label{op}
For each $j\in \mathbb{N}$,
    $$
    -\infty < \liminf\limits_{c\to \infty} (\omega_{c,j}-mc^2)\leq  \limsup\limits_{c\to \infty} (\omega_{c,j}-mc^2)< 0.
    $$
\end{lemma}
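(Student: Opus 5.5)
The plan is to use the identity \eqref{eq:energyOmegaRelation}, which expresses the multiplier in terms of the energy and the nonlinear term:
\[
\omega_{c,j}=\mathcal I_c(u_{c,j})-\frac{a(p-2)}{p}\int_\K |u_{c,j}|^p\,dx=d_{c,j}-\frac{a(p-2)}{p}\int_\K |u_{c,j}|^p\,dx,
\]
where $d_{c,j}=\mathcal J_c(v_{c,j})$ (respectively $\mathcal J_{c,T}(v_{c,j})$ when $4<p<6$). We subtract $mc^2$ and estimate the two terms separately.

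\textbf{Upper bound.} The proof of Theorem \ref{thm:1.1} gives
\[
d_{c,j}-mc^2\le e_j^a+\varepsilon+o_c(1),
\]
with $e_j^a<0$ by Lemma \ref{lem:NLSnegative}. Choosing $\varepsilon=|e_j^a|/2$, the nonlinear term is nonnegative, so
\[
\limsup_{c\to\infty}(\omega_{c,j}-mc^2)\le e_j^a/2<0.
\]
This uses only that $\omega_{c,j}\le d_{c,j}$.

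\textbf{Lower bound.} Here we need $\liminf_c(d_{c,j}-mc^2)>-\infty$ together with a uniform bound on $\int_\K|u_{c,j}|^p$.

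The nonlinear term is controlled by Lemma \ref{H1b}: since $\sup_c\|u_{c,j}\|_{H^1}<\infty$, the Sobolev embedding gives a uniform bound on $\int_\K|u_{c,j}|^p$.

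For $d_{c,j}$, we use $\mathcal J_c(v)\ge\mathcal I_c(v)$ for $v\in S_c^+$, as in \eqref{eq:coercivityReduced}. This gives
\[
d_{c,j}-mc^2\ge T_c(v_{c,j})-\frac{2a}{p}\int_\K|v_{c,j}|^p\,dx.
\]
Then:
\begin{itemize}
\item When $4<p<6$, Lemma \ref{lem:truncLocalizationGraph} gives $T_c(v_{c,j})\le C$ and $\mathcal J_{c,T}=\mathcal J_c^{\rm loc}$ at these points. By Lemma \ref{lem:belowRestLocalExcess}, $\|v_{c,j}\|_{H^{1/2}}\le C$, so $\int_\K|v_{c,j}|^p\le C$.
\item When $2<p\le 4$, we first bound $T_c(v_{c,j})$ from $\mathcal I_c(v)\le\mathcal J_c(v)<mc^2$. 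We combine Lemma \ref{lemma2.1} ($\|v\|_1^2\le C(T_c(v)+1)$) with \eqref{eq:new} ($\int|v|^p\le C\|v\|_{H^{1/2}}^{p-2}\le C(T_c+1)^{(p-2)/2}$). For $p<4$ the exponent is less than $1$, which gives $T_c\le C$.
\end{itemize}
In both cases $d_{c,j}-mc^2\ge -C$ uniformly in $c$, hence $\omega_{c,j}-mc^2\ge -C$.

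\textbf{Main obstacle.} The critical case $p=4$. There the bound $\int|v|^4\le C(T_c+1)$ has exponent exactly $1$, and the naive argument fails unless $a$ is small relative to the Gagliardo--Nirenberg constant. To avoid this, we would instead bound $\omega_{c,j}$ from below directly through \eqref{eq:DnormEstimate} and Lemma \ref{H1b}, rather than through $d_{c,j}$. Lemma \ref{H1b} already asserts an $H^1$ bound for all $p\le4$, which makes the nonlinear term bounded. It then remains to bound $d_{c,j}-mc^2$ from below, and we can get this from
\[
d_{c,j}=\mathcal I_c(u_{c,j})\ge \|u_{c,j}^+\|_c^2-\|u_{c,j}^-\|_c^2-C.
\]
Using \eqref{eq:negative-part-bound-final-insert}, $\|u^-\|_c^2\le\mathcal A(v)$, and $\|u^+\|_c^2\ge mc^2\|u^+\|_{L^2}^2=mc^2(1-\|w\|_{L^2}^2)$, where $\|w\|_{L^2}^2\le\mathcal A(v)/\|v\|_c^2\le C/c^2$, the lower bound reduces again to a uniform bound on $\mathcal A(v_{c,j})$. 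We would obtain that bound from $\|v_{c,j}\|_{H^{1/2}}$ via the relation between $v_{c,j}$ and $u_{c,j}=t v_{c,j}+w$ with $t\to1$ and $\|w\|_c$ small.
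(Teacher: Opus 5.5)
Your upper bound is the paper's argument. For $p\neq 4$ your lower bound is valid but takes a different route: you bound the level $d_{c,j}$ from below through $\mathcal J_c(v)\ge\mathcal I_c(v)$ and coercivity of $T_c-\mathcal A$ on $S_c^+$. The paper never bounds the level; it tests the equation with $u_{c,j}^+$.

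The gap is the critical case $p=4$, which the statement includes. You say you will bound $\omega_{c,j}$ directly via \eqref{eq:DnormEstimate}, but you never do. Instead you return to $d_{c,j}$ and reduce everything to $\sup_c\mathcal A(v_{c,j})<\infty$. Your justification for that bound is circular. Both $\|w\|_{L^2}^2\le C/c^2$ and ``$\|w\|_c$ small'' are read off from $\|w\|_c^2+\|w\|_{L^2}^2\|v\|_c^2\le\mathcal A(v)$, so they presuppose the bound on $\mathcal A(v_{c,j})$ you are trying to prove. With the inequalities you use, at $p=4$ you only have $\mathcal A(v)\le C_a\,(T_c(v)+1)$ with exponent exactly $1$ and $a$ large, so nothing closes. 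As written, the case $p=4$ is unproved.

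The idea you named does close the gap in one line, for all $p\in(2,6)$ at once. Drop $c^2\|u_{c,j}'\|_{L^2}^2\ge 0$ in \eqref{eq:DnormEstimate} and expand the right-hand side. Then use $0<\omega_{c,j}<mc^2$, together with Lemma \ref{H1b} to bound $\int_{\mathcal K}|u_{c,j}|^p$ and $\int_{\mathcal K}|u_{c,j}|^{2p-2}$:
\[
m^2c^4\le\omega_{c,j}^2+2a\,\omega_{c,j}\int_{\mathcal K}|u_{c,j}|^p\,dx+a^2\int_{\mathcal K}|u_{c,j}|^{2p-2}\,dx\le\omega_{c,j}^2+Cc^2 .
\]
Hence $mc^2(mc^2-\omega_{c,j})\le(mc^2+\omega_{c,j})(mc^2-\omega_{c,j})\le Cc^2$, that is, $\omega_{c,j}-mc^2\ge -C/m$.

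The paper's version tests with $u_{c,j}^+$. Bounding the nonlinear pairing by Lemma \ref{H1b} and using $\|u^+\|_c^2\ge mc^2\|u^+\|_{L^2}^2$ gives $(\omega_{c,j}-mc^2)\|u_{c,j}^+\|_{L^2}^2\ge -C$. The factor $\|u_{c,j}^+\|_{L^2}^2=t_c^2$ is kept away from zero by part (i) of Propositions \ref{prop:reduction} and \ref{prop:localReductionSuper}. Both arguments avoid a lower bound on the minimax level, which is exactly where your $p$-dependent coercivity argument breaks.

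If you want to keep your own route at $p=4$, the bound on $\mathcal A(v_{c,j})$ must come from relative, not absolute, smallness of $w$:
\begin{enumerate}[label=\textup{(\alph*)}]
\item Corollary \ref{lem:GNDirac} gives $\mathcal A(v)\le Cc^{-1}\|v\|_c^2$, hence $t^2\ge\frac12$ without circularity.
\item \eqref{eq:new} and \eqref{3} give $\|w\|_{H^{1/2}}^2\le Cc^{-1}\mathcal A(v)\le Cc^{-1}\|v\|_{H^{1/2}}^2$.
\item This last term can be absorbed in $\|v\|_{H^{1/2}}\le t^{-1}\big(\|u\|_{H^{1/2}}+\|w\|_{H^{1/2}}\big)$ for $c$ large, and Lemma \ref{H1b} then bounds $\|v\|_{H^{1/2}}$, hence $\mathcal A(v)$.
\end{enumerate}
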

\begin{proof}
    It follows from Lemma \ref{lem:FWgraph} and Lemma \ref{lem:FWgraph2} that $\limsup\limits_{c\to \infty} (\omega_{c,j}-mc^2)< 0$.
    On the other hand,  multiplying both sides of \eqref{dirac} by \(u_{c,j}^+\) and integrating, we obtain
  $$
  \|u_{c,j}^+\|_c^2 -\Re\int_{\K} |{u_{c,j}}|^{p-2} u_{c,j}\cdot u_{c,j}^+d x= \omega_{c,j}\|u_{c,j}^+\|_{L^2}^2.
  $$
  Using Lemma \ref{H1b}, we obtain that there exists a constant ${C}$ independent of $c$, such that
  $$
  \Re\int_{\K} |{u_{c,j}}|^{p-2} u_{c,j}\cdot u_{c,j}^+ dx\leq {C}.
  $$
  Hence, we get
  $$
  (\omega_{c,j}-mc^2)\|u_{c,j}^+\|_{L^2}^2 \geq \omega_{c,j}\|u_{c,j}^+\|_{L^2}^2- \|u_{c,j}^+\|_c^2\geq  -{C},
  $$
  combining this with \eqref{eq:fibre-basic-estimate-final-insert2}, we get $-\infty < \liminf\limits_{c\to \infty} (\omega_{c,j}-mc^2).$ 
This completes the proof.
\end{proof}

\begin{lemma}\label{gh1}
        $$
        \|g_{c,j}\|_{H^1}\le \frac{C}{c}.
        $$
\end{lemma}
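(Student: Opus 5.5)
We will prove the bound on the second component directly from the component form of the Dirac equation. Writing $u_{c,j}=(f_{c,j},g_{c,j})^T$, the equation $\mathscr D_c u_{c,j}-\omega_{c,j}u_{c,j}=a\chi_\K|u_{c,j}|^{p-2}u_{c,j}$ reads edgewise
\[
-ic\,g_{c,j}'+(mc^2-\omega_{c,j})f_{c,j}=a\chi_\K|u_{c,j}|^{p-2}f_{c,j},
\qquad
-ic\,f_{c,j}'-(mc^2+\omega_{c,j})g_{c,j}=a\chi_\K|u_{c,j}|^{p-2}g_{c,j}.
\]
The second equation is the one to exploit: it gives
\[
g_{c,j}=\frac{-ic\,f_{c,j}'-a\chi_\K|u_{c,j}|^{p-2}g_{c,j}}{mc^2+\omega_{c,j}} .
\]
By Lemma \ref{op}, $mc^2+\omega_{c,j}\ge 2mc^2-C\ge mc^2$ for $c$ large. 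Lemma \ref{H1b} gives $\sup_c\|u_{c,j}\|_{H^1}<\infty$, hence $\sup_c\|u_{c,j}\|_{L^\infty}<\infty$ by the one-dimensional Sobolev embedding on each edge. Therefore
\[
\|g_{c,j}\|_{L^2}\le \frac{c\|f_{c,j}'\|_{L^2}+C\|g_{c,j}\|_{L^2}}{mc^2}\le \frac{C}{c}+\frac{C}{c^2}\|g_{c,j}\|_{L^2},
\]
and absorbing the last term for $c$ large yields $\|g_{c,j}\|_{L^2}\le C/c$.

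For the derivative, we use the first equation:
\[
c\,g_{c,j}'=-i\big[(mc^2-\omega_{c,j})f_{c,j}-a\chi_\K|u_{c,j}|^{p-2}f_{c,j}\big].
\]
By Lemma \ref{op}, $0<mc^2-\omega_{c,j}\le C$ uniformly in $c$, and $\||u_{c,j}|^{p-2}f_{c,j}\|_{L^2(\K)}\le C$ by the uniform $L^\infty$ bound. Hence $\|g_{c,j}'\|_{L^2}\le C/c$. Combining this with the $L^2$ bound gives $\|g_{c,j}\|_{H^1}\le C/c$.

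Everything rests on the two uniform inputs already established: the $H^1$ bound of Lemma \ref{H1b}, used for $\|f'\|_{L^2}$ and $L^\infty$ control, and the two-sided control of $\omega_{c,j}-mc^2$ in Lemma \ref{op}. The only point needing care is that the equation holds in the strong sense, edge by edge, so that the pointwise identities above are legitimate. This follows since $u_{c,j}\in\operatorname{dom}(\mathscr D_c)$: the right-hand side lies in $L^2$, and $\mathscr D_c u_{c,j}=\omega_{c,j}u_{c,j}+a\chi_\K|u_{c,j}|^{p-2}u_{c,j}\in L^2$. We expect no real obstacle beyond this regularity check.
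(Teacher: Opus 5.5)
Your proof is correct and follows the paper's argument essentially verbatim. You use the second component equation with the uniform $H^1$ (hence $L^\infty$) bound of Lemma~\ref{H1b} to get $\|g_{c,j}\|_{L^2}\le C/c$, and the first component equation with $0<mc^2-\omega_{c,j}\le C$ from Lemma~\ref{op} to get $\|g_{c,j}'\|_{L^2}\le C/c$; your absorption step is not even needed, since $\|g_{c,j}\|_{L^2}\le 1$ already makes the nonlinear term $O(c^{-2})$.
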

\begin{proof}
               We write \eqref{dirac} componentwise:
\begin{equation}\label{eq:components1}
\begin{cases}
-ic g_{c,j}'+(mc^2-\omega_{c,j})f_{c,j}=a\chi_\K|u_{c,j}|^{p-2}f_{c,j},\\[1mm]
-ic f_{c,j}'-(mc^2+\omega_{c,j})g_{c,j}=a\chi_\K|u_{c,j}|^{p-2}g_{c,j}.
\end{cases}
\end{equation}
        From the second equation in \eqref{eq:components1},
        \begin{equation}\label{kl}
                g_{c,j}=-\frac{ic}{mc^2+\omega_{c,j}}f_{c,j}'
-\frac{a}{mc^2+\omega_{c,j}}\chi_\K|u_{c,j}|^{p-2}g_{c,j}.
        \end{equation}
Using the uniform $H^1$ bound, the embedding $H^1(\G)\hookrightarrow L^\infty(\G)$, we get
\[
\|g_{c,j}\|_{L^2}\le \frac{C}{c}.
\]
Similarly, by combining the first equation in \eqref{eq:components1} with Lemma \ref{op}, $\|g_{c,j}'\|_{L^2}\leq Cc^{-1}$.
Therefore
\begin{equation}\label{eq:gSmallNR1}
\|g_{c,j}\|_{H^1(\G)}\le \frac{C}{c}.
\end{equation}
\end{proof}
\begin{comment}

\begin{lemma}\label{H2b}
      $$
      \sup_{c>c_0}\|u_{c,j}\|_{H^2}<\infty.
      $$
\end{lemma}
\begin{proof}
       We write \eqref{dirac} componentwise:
\begin{equation}\label{eq:components1}
\begin{cases}
-ic g_{c,j}'+(mc^2-\omega_{c,j})f_{c,j}=a\chi_\K|u_{c,j}|^{p-2}f_{c,j},\\[1mm]
-ic f_{c,j}'-(mc^2+\omega_{c,j})g_{c,j}=a\chi_\K|u_{c,j}|^{p-2}g_{c,j}.
\end{cases}
\end{equation}
Then, combining \eqref{eq:components1}, Lemma \ref{op} and Lemma \ref{H1b}, for each $q\ge 2$, we have
\begin{equation}\label{w1p}
        \sup_{c>c_0}\|u_{c,j}\|_{W^{1,q}}<\infty.
\end{equation}
      Consequently, differentiating both sides of \eqref{eq:components1} and combining with
       \eqref{w1p}, we obtain
        $$
      \sup_{c>c_0}\|u_{c,j}\|_{H^2}<\infty.
      $$
\end{proof}
\end{comment}

\begin{proof}[\textbf{Proof of Theorem 1.2}]

Let
\[
\rho_c:=\|f_{c,j}\|_{L^2},
\qquad
\widetilde f_{c,j}:=\rho_c^{-1}f_{c,j}.
\]
By Lemma \ref{gh1} and $\|u_{c,j}\|_{L^2}=1$, $\rho_c\to1$ and $\widetilde f_{c,j}\in S_\infty$ for large $c$.
 We now show that $\widetilde f_{c,j}$ is a Palais-Smale 
 sequence for $\mathcal I_\infty$ on $S_\infty$
 at the negative level $d_\infty$. Eliminating $g_{c,j}$ from \eqref{eq:components1} gives, in $H^{-1}(\G)$,
\begin{equation}\label{eq:secondOrderApprox}
-\frac{c^2}{mc^2+\omega_{c,j}}f_{c,j}''+\nu_{c,j} f_{c,j}
=a\chi_\K|u_{c,j}|^{p-2}f_{c,j}+r_c,
\end{equation}
where $r_c\to0$ in $H^{-1}(\G)$. Indeed, all error 
terms contain either the factor $(mc^2+\omega_{c,j})^{-1}$
 multiplying the localized nonlinearity or the 
 component $g_{c,j}$, and hence vanish by the uniform
  $H^1$ bound and Lemma \ref{gh1}. Since for each $q>1$
\[
\frac{c^2}{mc^2+\omega_{c,j}}=\frac1{2m}+o_c(1),
\qquad
|u_{c,j}|^{p-2}f_{c,j}-|f_{c,j}|^{p-2}f_{c,j}=o_c(1)
\quad\text{in }L^{q}(\K),
\]
we obtain, uniformly for $\varphi\in H^1(\G)$ with $\|\varphi\|_{H^1}\le1$,
\begin{equation}\label{eq:limitPSderivative}
\frac1{2m}\Re \int_\G \widetilde f_{c,j}'\overline{\varphi'}\dd x
-a\Re \int_\K |\widetilde f_{c,j}|^{p-2}\widetilde f_{c,j}\overline{\varphi}\dd x
+\nu_{c,j}\Re \int_\G \widetilde f_{c,j}\overline{\varphi}\dd x=o_c(1).
\end{equation}
where $\nu_{c,j}:=mc^2-\omega_{c,j}$, and up to a subsequence, $\nu_{c,j}\to \nu_j>0$.
Hence, $\widetilde f_{c,j}$ is a Palais-Smale sequence for $\mathcal I_\infty|_{S_\infty}$, with Lagrange multiplier $-\nu_j$.
Since $g_{c,j}=-ic(2mc^2+o(c^2))^{-1}f_{c,j}'+o(c^{-1})$ in $L^2$, the identity $\mathcal I_c(u)=(\mathscr{D}_cu,u)_2-\frac{2a}{p}\int_\K |u|^p\dd x$, and an edgewise integration by parts, one obtains
\begin{equation}\label{eq:energyExpansionNR}
\mathcal I_c(u_{c,j})-mc^2
=\frac1{2m}\int_\G |f_{c,j}'|^2\dd x-\frac{2a}{p}\int_\K |f_{c,j}|^p\dd x+o_c(1)
=\mathcal I_\infty(\widetilde f_{c,j})+o_c(1),
\end{equation}
 Together with Lemma \ref{lem:FWgraph} and Lemma \ref{lem:FWgraph2}, this gives
\[
\mathcal I_\infty(\widetilde f_{c,j})\to d_\infty<0.
\]
Therefore $\widetilde f_{c,j}$ is a Palais--Smale sequence for $\mathcal I_\infty|_{S_\infty}$ at a negative level. By Lemma \ref{lem:NLSPS}, after extracting a subsequence,
\[
\widetilde f_{c,j}\to f_\infty\quad\text{strongly in }H^1(\G).
\]
Since $\rho_c\to1$, the same convergence holds for $f_{c,j}$.
\end{proof}

\appendix
\section{Laplacians and Dirac operators on noncompact metric graphs}
\label{app:graph-operators}

This appendix mainly introduces some
 basic properties of the Laplacian and Dirac operators 
 on noncompact metric graphs $\G=(V,E)$.
\subsection{The Laplacian operator}

Set
\[
H^1_{\mathrm{K}}(\mathcal{G})
:=
\big\{f\in H^1(\mathcal{G}):
 f_e(v)=f_h(v)\text{ for all }e,h\succ v,\ v\in V _{\mathcal{K}}
\big\},
\]
and
\[
H^1_{\mathrm{D}}(\mathcal{G})
:=
\left\{g\in H^1(\mathcal{G}):
\sum_{e\succ v} g_e(v)^{\pm}=0
\text{ for every }v\in V _{\mathcal{K}}
\right\}.
\]
The subscript \(\mathrm{K}\) refers to the Kirchhoff-type vertex conditions, 
while \(\mathrm{D}\) refers to the dual condition.
Let
\[
T:H^1_{\mathrm{K}}(\mathcal{G})\subset L^2(\mathcal{G})\longrightarrow L^2(\mathcal{G}),
\qquad
Tf=f'.
\]
The operator \(T\) is closed and densely defined.  Its adjoint is
\[
T^*g=-g',
\qquad
\operatorname{dom}(T^*)=H^1_{\mathrm{D}}(\mathcal{G}).
\]
Indeed, for \(f\in H^1_{\mathrm{K}}(\mathcal{G})\) and \(g\in H^1(\mathcal{G})\), integration by parts on every edge gives
\[
\sum_{e\in E }\int_{I_e} f'_e\overline{g_e}\,dx
=
-
\sum_{e\in E }\int_{I_e} f_e\overline{g'_e}\,dx
+
\sum_{v\in V _{\mathcal{K}}}
 f(v)
 \overline{\left(\sum_{e\succ v}g_e(v)^{\pm}\right)}.
\]
The boundary terms at infinity vanish for \(H^1\)-functions on half-lines.  Since \(f\) is continuous at each vertex in \(V _{\mathcal{K}}\), the number \(f(v)\) is well-defined, and these vertex values can be prescribed independently.  Hence the boundary term vanishes for all \(f\in H^1_{\mathrm{K}}(\mathcal{G})\) if and only if
\[
\sum_{e\succ v}g_e(v)^{\pm}=0,
\qquad v\in V _{\mathcal{K}},
\]
which proves the formula for \(T^*\).

The scalar Laplacian operator with Kirchhoff-type vertex condition is
\[
L_{\mathcal{G}}^{\mathrm{K}}:=T^*T.
\]
It is a nonnegative self-adjoint operator on \(L^2(\mathcal{G})\).  It acts edgewise as
\[
L_{\mathcal{G}}^{\mathrm{K}}f=-f'',  
\]
with domain
\[
\operatorname{dom}(L_{\mathcal{G}}^{\mathrm{K}})
=
\left\{
 f\in H^2(\mathcal{G}):
\begin{array}{l}
 f_e(v)=f_h(v),\quad e,h\succ v,\ v\in V  _{\mathcal{K}},\\[2mm]
 \displaystyle\sum_{e\succ v}\partial_e f(v)=0,
 \quad v\in V  _{\mathcal{K}}
\end{array}
\right\}.
\]
The associated closed quadratic form is
\[
q_{\mathcal{G}}[f]
=
\|(L_{\mathcal{G}}^{\mathrm{K}})^{1/2}f\|_{2}^{2}
=
\|f'\|_{2}^{2},
\qquad
\operatorname{dom}(q_{\mathcal{G}})=
\operatorname{dom}((L_{\mathcal{G}}^{\mathrm{K}})^{1/2})=H^1_{\mathrm{K}}(\mathcal{G}).
\]
Consequently,
\[
\|(I+L_{\mathcal{G}}^{\mathrm{K}})^{1/2}f\|_{L^2}^2
=
\|f\|_{L^2}^2+
\|f'\|_{L^2}^2,
\qquad f\in H^1_{\mathrm{K}}(\mathcal{G}).
\]
Thus the inhomogeneous form norm induced by \(I+L_{\mathcal{G}}^{\mathrm{K}}\) is exactly the usual edgewise \(H^1\)-norm restricted to the Kirchhoff form domain.

The Laplacian operator with dual Kirchhoff-type vertex condition is
\[
L_{\mathcal{G }}^{\mathrm{D}}:=TT^*.
\]
It is also a nonnegative self-adjoint operator on \(L^2(\mathcal{G})\).  It acts edgewise as
\[
L_{\mathcal{G }}^{\mathrm{D}}g=-g'',
\]
with domain
\[
\operatorname{dom}(L_{\mathcal{G }}^{\mathrm{D}})
=
\left\{
 g\in H^2(\mathcal{G}):
\begin{array}{l}
 \displaystyle\sum_{e\succ v}g_e(v)^{\pm}=0,
 \quad v\in V  _{\mathcal{K}},\\[2mm]
 g'_e(v)=g'_h(v),\quad e,h\succ v,\ v\in V  _{\mathcal{K}}
\end{array}
\right\}.
\]
Here \(g'_e(v)\) denotes the derivative with respect to the fixed coordinate on the edge \(e\), not the outgoing derivative.  Its form domain is
\[
\operatorname{dom}\big((L_{\mathcal{G }}^{\mathrm{D}})^{1/2}\big)=H^1_{\mathrm{D}}(\mathcal{G}),
\qquad
\|(L_{\mathcal{G }}^{\mathrm{D}})^{1/2}g\|_{L^2}^2
=
\|g'\|_{L^2}^2.
\]
Therefore,
\[
\|(I+L_{\mathcal{G }}^{\mathrm{D}})^{1/2}g\|_{L^2}^2
=
\|g\|_{L^2}^2+
\|g'\|_{L^2}^2,
\qquad g\in H^1_{\mathrm{D}}(\mathcal{G}).
\]
%The inhomogeneous norm induced by \(I+L_{\mathcal{G }}^{\mathrm{D}}\) is thus equivalent, in fact equal, to the usual edgewise \(H^1\)-norm on \(H^1_{\mathrm{D}}(\mathcal{G})\).  By contrast, the homogeneous seminorm
%\[
%\|(L_{\mathcal{G }}^{\mathrm{D}})^{1/2}g\|_{L^2}=
%\|g'\|_{L^2}
%\]
%is not equivalent to the full \(H^1\)-norm in general.  There are two related reasons: \(L_{\mathcal{G }}^{\mathrm{D}}\) may have a nontrivial kernel, and \(0\) lies at the bottom of the essential spectrum on every noncompact graph with a half-line.
Since
\[
L_{\mathcal{G}}^{\mathrm{K}}=T^*T,
\qquad
L_{\mathcal{G }}^{\mathrm{D}}=TT^*,
\]
 their positive spectral parts are unitarily equivalent through the polar decomposition of \(T\).  In particular, away from zero their spectra agree, with the same multiplicities for isolated positive eigenvalues.

\begin{lemma}
\label{lem:laplacian-spectra}
One has
\[
\sigma(L_{\mathcal{G}}^{\mathrm{K}})=\sigma_{\mathrm{ess}}(L_{\mathcal{G}}^{\mathrm{K}})=[0,+\infty),
\qquad
\sigma(L_{\mathcal{G }}^{\mathrm{D}})=\sigma_{\mathrm{ess}}(L_{\mathcal{G }}^{\mathrm{D}})=[0,+\infty).
\]
\end{lemma}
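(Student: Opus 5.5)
We need to prove that $\sigma(L^{\mathrm K}_\G)=\sigma_{\mathrm{ess}}(L^{\mathrm K}_\G)=[0,+\infty)$, and likewise for $L^{\mathrm D}_\G$. The plan has three steps: nonnegativity, a Weyl sequence for every $\lambda\ge0$, and transfer to $L^{\mathrm D}_\G$.

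\textbf{Nonnegativity.} Both operators are of the form $T^*T$ and $TT^*$, so they are nonnegative self-adjoint. Hence both spectra are contained in $[0,+\infty)$.

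\textbf{Weyl sequences for $L^{\mathrm K}_\G$.} Since $\G$ is noncompact with finitely many edges, it contains at least one half-line $e_\infty\simeq[0,+\infty)$. Fix $\lambda=k^2\ge0$ and a cutoff $\chi\in C_c^\infty(\R)$ with $\chi\ge0$, $\operatorname{supp}\chi\subset(0,1)$ and $\chi\not\equiv0$. Set
\[
\psi_n(x):=n^{-1/2}\chi\bigl((x-n^2)/n\bigr)e^{ikx}
\]
on $e_\infty$ and $\psi_n\equiv0$ on every other edge.
\begin{itemize}
\item For $n$ large, the support of $\psi_n$ lies in $(n^2,n^2+n)$, away from all vertices. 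So $\psi_n$ trivially satisfies both the Kirchhoff conditions and the dual conditions, and $\psi_n\in\dom(L^{\mathrm K}_\G)\cap\dom(L^{\mathrm D}_\G)$.
\item $\|\psi_n\|_{L^2}=\|\chi\|_{L^2}$ is constant.
\item A direct computation gives $(-\tfrac{d^2}{dx^2}-k^2)\psi_n=n^{-1/2}\bigl[-n^{-2}\chi''-2ikn^{-1}\chi'\bigr]e^{ikx}$, evaluated at $(x-n^2)/n$. Its $L^2$ norm is $O(n^{-1})$.
\item The supports are pairwise disjoint for large $n$ (the intervals $(n^2,n^2+n)$ do not overlap), so $\psi_n$ is orthogonal and tends weakly to $0$.
\end{itemize}
These are exactly the hypotheses of Weyl's criterion. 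Hence $\lambda\in\sigma_{\mathrm{ess}}(L^{\mathrm K}_\G)$ for every $\lambda\ge0$, which gives $[0,\infty)\subset\sigma_{\mathrm{ess}}\subset\sigma\subset[0,\infty)$.

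\textbf{Transfer to $L^{\mathrm D}_\G$.} Since each $\psi_n$ lies in $\dom(L^{\mathrm D}_\G)$ and that operator also acts as $-d^2/dx^2$, the same sequence $(\psi_n)$ works verbatim. Alternatively, one can use the unitary equivalence of the parts of $T^*T$ and $TT^*$ on $(\ker)^\perp$. That only yields agreement away from $0$, and $0$ is recovered by closedness of the spectrum.

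The only point needing care is checking that the supports avoid the vertices, so the vertex conditions hold trivially. Weyl's criterion itself is standard.
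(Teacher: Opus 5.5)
Your proposal is correct and follows essentially the same route as the paper: nonnegativity of $T^*T$ and $TT^*$, then the Weyl sequence $n^{-1/2}\eta\bigl((x-n^2)/n\bigr)e^{ikx}$ on a half-line. This sequence is supported away from all vertices, so it lies in both domains and serves for both operators at once. Your explicit remainder computation and the disjoint-supports argument for weak convergence to $0$ fill in the details the paper calls a ``direct computation''.
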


\begin{proof}
Both operators are nonnegative, hence their spectra are contained in \([0,+\infty)\).  Conversely, let \(\mu\ge 0\).  Since \(\mathcal{G}\) is noncompact and has finitely many edges, it contains at least one half-line, say \(h=[0,+\infty)\).  Choose \(\eta\in C_c^{\infty}(0,1)\) with \(\|\eta\|_{L^2(0,1)}=1\), and define
\[
\eta_n(x):=n^{-1/2}\eta\!\left(\frac{x-n^2}{n}\right),
\qquad x\in[0,+\infty).
\]
Then \(\eta_n\) is supported in \((n^2,n^2+n)\), hence away from all vertices.  Let \(k=\sqrt{\mu}\), and set
\[
\phi_n(x):=\eta_n(x)e^{ikx}
\]
on the half-line \(h\), while \(\phi_n=0\) on every other edge.  Since \(\phi_n\) vanishes in a neighborhood of all vertices,
\[
\phi_n\in\operatorname{dom}(L_{\mathcal{G}}^{\mathrm{K}})
\cap
\operatorname{dom}(L_{\mathcal{G }}^{\mathrm{D}}).
\]
Moreover, \(\|\phi_n\|_{L^2}=1\) and \(\phi_n\rightharpoonup0\) weakly in \(L^2(\mathcal{G})\).  A direct computation gives
\[
\|(L_{\mathcal{G}}^{\mathrm{K}}-\mu)\phi_n\|_{L^2}\to0,
\qquad
\|(L_{\mathcal{G }}^{\mathrm{D}}-\mu)\phi_n\|_{L^2}\to0.
\]
By Weyl's criterion,
\[
\mu\in\sigma_{\mathrm{ess}}(L_{\mathcal{G}}^{\mathrm{K}})
\cap
\sigma_{\mathrm{ess}}(L_{\mathcal{G }}^{\mathrm{D}}).
\]
Since \(\mu\ge0\) was arbitrary, \([0,+\infty)\) is contained in both essential spectra.  Together with nonnegativity, this proves the claim.
\end{proof}

The kernel of $\ker L_{\mathcal{G}}^{\mathrm{K}}$ is trivial.  Indeed,
\[
\ker L_{\mathcal{G}}^{\mathrm{K}}=
\ker T.
\]
Thus \(f\in\ker L_{\mathcal{G}}^{\mathrm{K}}\) if and only if \(f'_e=0\) on every edge and \(f\) is continuous at every vertex in \(V _{\mathcal{K}}\).  Since \(\mathcal{G}\) is connected and noncompact, any such \(L^2\)-function must vanish on each half-line, and therefore, by continuity and connectedness, vanish everywhere.  Hence
\[
\ker L_{\mathcal{G}}^{\mathrm{K}}=\{0\}.
\]

The kernel of $\ker L_{\mathcal{G}}^{\mathrm{D}}$ is
\[
\ker L_{\mathcal{G }}^{\mathrm{D}}=
\ker T^*.
\]
Hence \(g\in\ker L_{\mathcal{G }}^{\mathrm{D}}\) if and only if
\[
g'_e=0\quad\text{on every edge }e,
\qquad
\sum_{e\succ v}g_e(v)^{\pm}=0
\quad\text{for every }v\in V  _{\mathcal{K}}.
\]
The \(L^2\)-condition forces \(g\) to vanish on every half-line.  On bounded edges, \(g\) is constant on each edge.  Therefore \(\ker L_{\mathcal{G }}^{\mathrm{D}}\) is the finite-dimensional space of edge constants \((a_e)_{e\in E _{\mathcal{K}}}\) satisfying the signed balance equations
\[
\sum_{e:e_-=v}a_e-
\sum_{e:e_+=v}a_e=0,
\qquad v\in V  _{\mathcal{K}},
\]
where loops contribute once with sign \(+1\) and once with sign \(-1\).

Let \(B\) be the oriented incidence matrix of the finite compact graph \(\mathcal{K}\):
\[
B_{v e}:=
\begin{cases}
1, & v=e_-,\\
-1, & v=e_+,\\
0, & \text{otherwise},
\end{cases}
\qquad
v\in V  _{\mathcal{K}},\ e\in E _{\mathcal{K}}.
\]
Then the preceding balance condition is exactly \(Ba=0\).  If \(\kappa(\mathcal{K})\) denotes the number of connected components of \(\mathcal{K}\), then
\[
\operatorname{rank}B=|V _{\mathcal{K}}|-\kappa(\mathcal{K}),
\]
and therefore
\[
\dim\ker L_{\mathcal{G }}^{\mathrm{D}}
=
| E _{\mathcal{K}}|-|V _{\mathcal{K}}|+\kappa(\mathcal{K})
=:b_1(\mathcal{K}).
\]
Thus \(\ker L_{\mathcal{G }}^{\mathrm{D}}\) has dimension equal to the first Betti number of the compact core.  In particular, if \(\mathcal{K}\) has no cycle, then
\[
\ker L_{\mathcal{G }}^{\mathrm{D}}=\{0\}.
\]

\subsection{The Dirac operators}
\label{app:dirac-selfadjoint}
The massless Dirac operator is
\[
\mathscr{D}_0=-i\sigma_1\frac{d}{dx}.
\]
With respect to the decomposition
\[
L^2(\mathcal{G},\mathbb{C}^2)=L^2(\mathcal{G})\oplus L^2(\mathcal{G}),
\]
it is represented by the block operator
\[
\mathscr{D}_0
=
\begin{pmatrix}
0&iT^*\\
-iT&0
\end{pmatrix},
\qquad
\operatorname{dom}(\mathscr{D}_0)
=
\operatorname{dom}(T)\oplus\operatorname{dom}(T^*)
=H^1_{\mathrm{K}}(\mathcal{G})\oplus H^1_{\mathrm{D}}(\mathcal{G}).
\]
Equivalently, for \(u=(u^1,u^2)^T\),
\[
\operatorname{dom}(\mathscr{D}_0)
=
\left\{
 u\in H^1(\mathcal{G},\mathbb{C}^2):
\begin{array}{l}
 u^1_e(v)=u^1_h(v),\quad e,h\succ v,\ v\in V  _{\mathcal{K}},\\[2mm]
 \displaystyle\sum_{e\succ v}u^2_e(v)^{\pm}=0,
 \quad v\in V  _{\mathcal{K}}
\end{array}
\right\}.
\]
Since \(T\) is closed and densely defined, the operator
\[
\begin{pmatrix}
0&iT^*\\
-iT&0
\end{pmatrix}
\]
is self-adjoint on \(\operatorname{dom}(T)\oplus\operatorname{dom}(T^*)\).  Hence \(\mathscr{D}_0\) is self-adjoint on \(L^2(\mathcal{G},\mathbb{C}^2)\).

For \(c>0\) and \(m>0\), the massive Dirac operator
\[
\mathscr{D}_c
=
-ic\sigma_1\frac{d}{dx}+mc^2\sigma_3
=
c\mathscr{D}_0+mc^2\sigma_3,
\]
with the same vertex conditions, namely
\[
\operatorname{dom}(\mathscr{D}_c)=\operatorname{dom}(\mathscr{D}_0).
\]
The mass term \(mc^2\sigma_3\) is bounded and self-adjoint.  Therefore, by the Kato--Rellich theorem, \(\mathscr{D}_c\) is self-adjoint on \(\operatorname{dom}(\mathscr{D}_0)\).

The square of \(\mathscr{D}_0\) is defined on its natural domain
\[
\operatorname{dom}(\mathscr{D}_0^2)
=
\{u\in\operatorname{dom}(\mathscr{D}_0):\mathscr{D}_0u\in\operatorname{dom}(\mathscr{D}_0)\}.
\]
For \(u=(f,g)^T\),
\[
\mathscr{D}_0(f,g)^T=(iT^*g,-iTf)^T,
\]
and hence
\[
\mathscr{D}_0^2(f,g)^T
=(T^*Tf,TT^*g)^T
=(L_{\mathcal{G}}^{\mathrm{K}}f,L_{\mathcal{G }}^{\mathrm{D}}g)^T.
\]
Thus
\[
\mathscr{D}_0^2=L_{\mathcal{G}}^{\mathrm{K}}\oplus L_{\mathcal{G }}^{\mathrm{D}},
\qquad
\operatorname{dom}(\mathscr{D}_0^2)
=
\operatorname{dom}(L_{\mathcal{G}}^{\mathrm{K}})
\oplus
\operatorname{dom}(L_{\mathcal{G }}^{\mathrm{D}}).
\]
In particular, \(\mathscr{D}_0^2\) acts edgewise as \(-d^2/dx^2\) on both components.
  The first spinorial component satisfies the Kirchhoff-type vertex conditions, 
  while the second component satisfies the dual vertex conditions.

Since
\[
\sigma_3\operatorname{dom}(\mathscr{D}_0)=\operatorname{dom}(\mathscr{D}_0),
\qquad
\mathscr{D}_0\sigma_3=-\sigma_3\mathscr{D}_0,
\]
therefore
\[
\mathscr{D}_c^2
=
(c\mathscr{D}_0+mc^2\sigma_3)^2
=
c^2\mathscr{D}_0^2+m^2c^4I
\]
on
\[
\operatorname{dom}(\mathscr{D}_c^2)=\operatorname{dom}(\mathscr{D}_0^2).
\]
Consequently, as nonnegative self-adjoint operators,
\[
|\mathscr{D}_c|
=(\mathscr{D}_c^2)^{1/2}
=
\big(c^2\mathscr{D}_0^2+m^2c^4I\big)^{1/2}.
\]
with domains
\[
\operatorname{dom}(|\mathscr{D}_c|)=\operatorname{dom}(\mathscr{D}_c).
\]

\begin{proposition}[Spectrum of \(\mathscr{D}_c\)]

For every \(c>0\),
\[
\sigma(\mathscr{D}_c)=\sigma_{\mathrm{ess}}(\mathscr{D}_c)
=(-\infty,-mc^2]\cup[mc^2,+\infty).
\]
Consequently,
\[
\sigma(|\mathscr{D}_c|)=[mc^2,+\infty).
\]
\end{proposition}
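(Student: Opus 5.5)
We will reduce everything to the spectral information already collected for the scalar Laplacians. Since $\mathscr D_c^2=c^2\mathscr D_0^2+m^2c^4I$ with $\mathscr D_0^2=L_{\mathcal G}^{\mathrm K}\oplus L_{\mathcal G}^{\mathrm D}$, the spectral mapping theorem together with Lemma~\ref{lem:laplacian-spectra} gives
\[
\sigma(\mathscr D_c^2)=\{c^2\mu+m^2c^4:\mu\in[0,+\infty)\}=[m^2c^4,+\infty),
\]
and the same holds for the essential spectrum, because a function of a self-adjoint operator maps essential spectrum onto essential spectrum when the function is continuous and proper. Taking square roots yields $\sigma(|\mathscr D_c|)=[mc^2,+\infty)$ directly, which is the second claim. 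For $\mathscr D_c$ itself, $\lambda\in\sigma(\mathscr D_c)$ forces $\lambda^2\in\sigma(\mathscr D_c^2)$, hence $\sigma(\mathscr D_c)\subset(-\infty,-mc^2]\cup[mc^2,+\infty)$.

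For the reverse inclusion, and for essentiality, we use Weyl sequences on a half-line, as in the proof of Lemma~\ref{lem:laplacian-spectra}. Fix $\lambda$ with $|\lambda|\ge mc^2$ and put $k:=\sqrt{\lambda^2-m^2c^4}/c\ge0$. On a half-line, the free Dirac operator $-ic\sigma_1\partial_x+mc^2\sigma_3$ has the plane-wave generalized eigenvector $\xi e^{ikx}$ with eigenvalue $\lambda$, where $\xi\in\mathbb C^2\setminus\{0\}$ solves the $2\times2$ system
\[
\begin{pmatrix} mc^2-\lambda & ck\\ ck & -mc^2-\lambda\end{pmatrix}\xi=0,
\]
whose determinant $\lambda^2-m^2c^4-c^2k^2$ vanishes. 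We set $\phi_n:=\eta_n(x)e^{ikx}\xi/|\xi|$ on that half-line and zero elsewhere, with $\eta_n$ the same cutoffs supported in $(n^2,n^2+n)$. These functions vanish near all vertices, so they lie in $\operatorname{dom}(\mathscr D_c)$. They satisfy $\|\phi_n\|_{L^2}=1$ and $\phi_n\rightharpoonup0$, and
\[
(\mathscr D_c-\lambda)\phi_n=-ic\,\eta_n'e^{ikx}\sigma_1\xi/|\xi|,
\]
whose $L^2$ norm is $O(c\|\eta_n'\|_{L^2})=O(c/n)\to0$. Weyl's criterion then gives $\lambda\in\sigma_{\mathrm{ess}}(\mathscr D_c)$.

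The main point requiring care is the edge case $|\lambda|=mc^2$, where $k=0$. There we take $\xi=(1,0)^T$ for $\lambda=mc^2$ and $\xi=(0,1)^T$ for $\lambda=-mc^2$, and the same estimate applies. We also note that the eigenvalue $-mc^2$ of finite multiplicity $b_1(\mathcal K)$ from Lemma~\ref{lem:thresholdFinite} does not spoil essentiality: the Weyl sequence is orthonormalizable and weakly null, so $-mc^2$ is still in the essential spectrum. Combining both inclusions, $\sigma(\mathscr D_c)=\sigma_{\mathrm{ess}}(\mathscr D_c)=(-\infty,-mc^2]\cup[mc^2,+\infty)$.
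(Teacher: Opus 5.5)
Your proposal is correct and follows essentially the same route as the paper. You obtain the inclusion $\sigma(\mathscr D_c)\subset(-\infty,-mc^2]\cup[mc^2,+\infty)$ from $\mathscr D_c^2=c^2\mathscr D_0^2+m^2c^4I$ and Lemma~\ref{lem:laplacian-spectra}, and the reverse inclusion and essentiality from half-line Weyl sequences $\eta_n e^{ikx}\xi$ (with $\xi=(1,0)^T$ and $(0,1)^T$ at the thresholds). Reading off $\sigma(|\mathscr D_c|)$ directly from $\sigma(\mathscr D_c^2)$ by spectral mapping, and your remark about the threshold eigenvalue $-mc^2$, are only cosmetic additions to the paper's argument.
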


\begin{proof}
From the identity
\[
\mathscr{D}_c^2=c^2\mathscr{D}_0^2+m^2c^4I
\]
and from
\[
\mathscr{D}_0^2=L_{\mathcal{G}}^{\mathrm{K}}\oplus L_{\mathcal{G }}^{\mathrm{D}},
\]
Lemma~\ref{lem:laplacian-spectra} gives
\[
\sigma(\mathscr{D}_0^2)=[0,+\infty).
\]
Hence
\[
\sigma(\mathscr{D}_c^2)=[m^2c^4,+\infty).
\]
By the spectral theorem,
\[
\sigma(\mathscr{D}_c)
\subset
(-\infty,-mc^2]\cup[mc^2,+\infty).
\]

It remains to prove the reverse inclusion.  Let
\[
\lambda\in(-\infty,-mc^2)\cup(mc^2,+\infty).
\]
Choose a half-line \(h=[0,+\infty)\) in \(\mathcal{G}\).  Let \(\eta\in C_c^{\infty}(0,1)\), \(\|\eta\|_{L^2(0,1)}=1\), and set
\[
\eta_n(x):=n^{-1/2}\eta\!\left(\frac{x-n^2}{n}\right).
\]
Define
\[
k_{\lambda}:=\frac{\sqrt{\lambda^2-m^2c^4}}{c}>0.
\]
Choose \(a_{\lambda}\in\mathbb{C}^2\), \(|a_{\lambda}|=1\), such that
\[
(ck_{\lambda}\sigma_1+mc^2\sigma_3)a_{\lambda}=\lambda a_{\lambda}.
\]
For example, one may take
\[
a_{\lambda}
=
\frac{1}{N_{\lambda}}
\begin{pmatrix}
1\\[1mm]
\displaystyle\frac{\lambda-mc^2}{ck_{\lambda}}
\end{pmatrix},
\]
with \(N_{\lambda}>0\) chosen so that \(|a_{\lambda}|=1\).

Set
\[
u_n(x):=\eta_n(x)e^{ik_{\lambda}x}a_{\lambda}
\]
on the half-line \(h\), and \(u_n=0\) on all other edges.  Since \(u_n\) vanishes in a neighborhood of all vertices, \(u_n\in\operatorname{dom}(\mathscr{D}_c)\).  Moreover,
\[
\|u_n\|_{L^2(\mathcal{G},\mathbb{C}^2)}=1,
\qquad
u_n\rightharpoonup0
\quad\text{weakly in }L^2(\mathcal{G},\mathbb{C}^2).
\]
A direct computation gives
\[
(\mathscr{D}_c-\lambda)u_n
=-ic\eta_n'(x)e^{ik_{\lambda}x}\sigma_1a_{\lambda},
\]
and therefore
\[
\|(\mathscr{D}_c-\lambda)u_n\|_{L^2}
\le c\|\eta_n'\|_{L^2(0,+\infty)}
=O(n^{-1})\to0.
\]
By Weyl's criterion,
\[
\lambda\in\sigma_{\mathrm{ess}}(\mathscr{D}_c).
\]

At the positive threshold \(\lambda=mc^2\), take
\[
a_+:=\begin{pmatrix}1\\0\end{pmatrix},
\qquad
u_n(x):=\eta_n(x)a_+
\]
on the half-line and zero elsewhere.  Then
\[
(mc^2\sigma_3-mc^2I)a_+=0,
\]
and
\[
(\mathscr{D}_c-mc^2)u_n
=-ic\eta_n'(x)\sigma_1a_+\to0
\quad\text{in }L^2.
\]
Thus \(mc^2\in\sigma_{\mathrm{ess}}(\mathscr{D}_c)\).  Similarly, for \(\lambda=-mc^2\), take
\[
a_-:=\begin{pmatrix}0\\1\end{pmatrix},
\qquad
u_n(x):=\eta_n(x)a_-.
\]
Then
\[
(mc^2\sigma_3+mc^2I)a_-=0,
\]
and
\[
(\mathscr{D}_c+mc^2)u_n
=-ic\eta_n'(x)\sigma_1a_-\to0
\quad\text{in }L^2.
\]
Therefore \(-mc^2\in\sigma_{\mathrm{ess}}(\mathscr{D}_c)\) and
\[
\sigma_{\mathrm{ess}}(\mathscr{D}_c)=(-\infty,-mc^2]\cup[mc^2,+\infty).
\]
 The statements for \(|\mathscr{D}_c|\) follows from the spectral theorem.
\end{proof}

The form domain of \(\mathscr{D}_c\) is
\[
Y_c=\operatorname{dom}(|\mathscr{D}_c|^{1/2}),
\]
 with inner product
\[
(u,v)_c
:=
\Re 
\big(|\mathscr D_c|^{1/2}u,|\mathscr D_c|^{1/2}v\big)_{L^2},
\qquad
\|u\|_c^2=(u,u)_c.
\]
The spectral gap gives
\[
\|u\|_c^2
\ge
mc^2\|u\|_{L^2}^2,
\qquad u\in Y_c.
\]
Since \(0\notin\sigma(\mathscr{D}_c)\), the positive and negative spectral projectors are
\[
P_c^+:=\mathbf{1}_{[mc^2,+\infty)}(\mathscr{D}_c),
\qquad
P_c^-:=\mathbf{1}_{(-\infty,-mc^2]}(\mathscr{D}_c),
\]
and
\[
P_c^\pm=\frac12\big(I\pm\mathscr{D}_c|\mathscr{D}_c|^{-1}\big).
\]
The spectral decomposition gives
\[
L^2(\mathcal{G},\mathbb{C}^2)=L_c^+\oplus L_c^-,
\qquad
L_c^\pm:=P_c^\pm L^2(\mathcal{G},\mathbb{C}^2),
\]
and
\[
Y_c=Y_c^+\oplus Y_c^-,
\qquad
Y_c^\pm:=P_c^\pm Y_c.
\]
The splitting is orthogonal both in \(L^2\) and for the inner product in $Y_c$.

Since
\[
|\mathscr{D}_c|^{1/2}
=
\big(c^2\mathscr{D}_0^2+m^2c^4I\big)^{1/4},
\]
we have, as sets,
\[
Y_c
=
\operatorname{dom}\big((c^2\mathscr{D}_0^2+m^2c^4I)^{1/4}\big)
=
\operatorname{dom}\big((I+\mathscr{D}_0^2)^{1/4}\big).
\]
Thus the underlying vector space \(Y_c\) is independent of \(c\), although the norm \(\|\cdot\|_c\) depends on \(c\).

For \(s\ge0\), introduce the  Hilbert space
\[
X^s:=\operatorname{dom}\big((I+\mathscr{D}_0^2)^{s/2}\big),
\qquad
\|u\|_{X^s}:=\|(I+\mathscr{D}_0^2)^{s/2}u\|_{L^2}.
\]
Then
\[
Y_c=X^{1/2}.
\]
Because
\[
\mathscr{D}_0^2=L_{\mathcal{G}}^{\mathrm{K}}\oplus L_{\mathcal{G }}^{\mathrm{D}},
\]
one has
\[
X^{1/2}
=
\operatorname{dom}\big((I+L_{\mathcal{G}}^{\mathrm{K}})^{1/4}\big)
\oplus
\operatorname{dom}\big((I+L_{\mathcal{G }}^{\mathrm{D}})^{1/4}\big),
\]
and, for \(u=(u^1,u^2)^T\),
\[
\|u\|_{X^{1/2}}^2
=
\|(I+L_{\mathcal{G}}^{\mathrm{K}})^{1/4}u^1\|_{L^2}^2
+
\|(I+L_{\mathcal{G }}^{\mathrm{D}})^{1/4}u^2\|_{L^2}^2.
\]
The fractional domains admit the interpolation descriptions
\[
\operatorname{dom}\big((I+L_{\mathcal{G}}^{\mathrm{K}})^{1/4}\big)
=
\big[L^2(\mathcal{G}),H^1_{\mathrm{K}}(\mathcal{G})\big]_{\frac12},
\]
and
\[
\operatorname{dom}\big((I+L_{\mathcal{G }}^{\mathrm{D}})^{1/4}\big)
=
\big[L^2(\mathcal{G}),H^1_{\mathrm{D}}(\mathcal{G})\big]_{\frac12},
\]
with equivalent norms.  

Since
\[
H^1_{\mathrm{K}}(\mathcal{G})\hookrightarrow H^1(\mathcal{G}),
\qquad
H^1_{\mathrm{D}}(\mathcal{G})\hookrightarrow H^1(\mathcal{G}),
\]
continuously, interpolation gives
\[
X^{1/2}=Y_c\hookrightarrow H^{1/2}(\mathcal{G},\mathbb{C}^2)=\big[L^2(\mathcal{G}),H^1(\mathcal{G},\mathbb{C}^2)\big]_{\frac12}
\]
continuously.  Hence there exists \(C>0\), depending only on \(\mathcal{G}\), such that
\[
\|u\|_{H^{1/2}(\mathcal{G},\mathbb{C}^2)}
\le
C\|u\|_{X^{1/2}},
\qquad u\in X^{1/2}.
\]

We now compare the \(c\)-dependent norm $\|\cdot\|_c$ with the fixed \(X^{1/2}\)-norm.  Let \(\mu_u\) be the spectral measure of the nonnegative self-adjoint operator \(\mathscr{D}_0^2\) associated with \(u\).  Then
\[
\|u\|_c^2
=
\int_{[0,+\infty)}(c^2\lambda+m^2c^4)^{1/2}\,d\mu_u(\lambda),
\]
whereas
\[
\|u\|_{X^{1/2}}^2
=
\int_{[0,+\infty)}(1+\lambda)^{1/2}\,d\mu_u(\lambda).
\]
It follows from the fact
\[
c\min\{1,m\}(1+
\lambda)^{1/2}\le (c^2\lambda+m^2c^4)^{1/2}
\le
c^2\max\{1,m\}(1+\lambda)^{1/2}
\]
that
\[
C_m^{-1}c\|u\|_{X^{1/2}}^2
\le
\|u\|_c^2
\le
C_m c^2\|u\|_{X^{1/2}}^2,
\qquad u\in Y_c.
\]
Thus, for each fixed \(c>0\), the norms \(\|\cdot\|_c\) and \(\|\cdot\|_{X^{1/2}}\) are equivalent on \(Y_c\). 
Consequently,
\[
\|u\|_{H^{1/2}(\mathcal{G},\mathbb{C}^2)}^2
\le
C c^{-1}\|u\|_c^2,
\qquad u\in Y_c,
\quad c\ge1.
\]

\noindent {Pan Chen\\
School of Mathematical Sciences,\\
 Shanghai Jiao Tong University, Shanghai 200240, P.R. China
\\
e-mail: chenpan2020@amss.ac.cn }
\medskip
\\
\noindent {Qi Guo\\
School of Mathematics,\\
Renmin University of China, Beijing, 100872, P.R. China\\
e-mail: qguo@ruc.edu.cn}


\begin{thebibliography}{99}

\bibitem{AdamiSerraTilliCalcVar2015}
R. Adami, E. Serra and P. Tilli,
\newblock NLS ground states on graphs,
\newblock \emph{Calc. Var. Partial Differential Equations} \textbf{54} (2015), 743--761.

\bibitem{AdamiSerraTilliJFA2016}
R. Adami, E. Serra and P. Tilli,
\newblock Threshold phenomena and existence results for NLS ground states on metric graphs,
\newblock \emph{J. Funct. Anal.} \textbf{271} (2016), 201--223.

\bibitem{AdamiSerraTilliCMP2017}
R. Adami, E. Serra and P. Tilli,
\newblock Negative energy ground states for the $L^2$-critical NLSE on metric graphs,
\newblock \emph{Comm. Math. Phys.} \textbf{352} (2017), 387--406.

\bibitem{AdamiSerraTilliCalcVar2019}
R. Adami, E. Serra and P. Tilli,
\newblock Multiple positive bound states for the subcritical NLS equation on metric graphs,
\newblock \emph{Calc. Var. Partial Differential Equations} \textbf{58} (2019), Paper No. 5.

\bibitem{BerkolaikoKuchmentBook}
G. Berkolaiko and P. Kuchment,
\newblock \emph{Introduction to Quantum Graphs},
\newblock Mathematical Surveys and Monographs, Vol. 186, American Mathematical Society, Providence, RI, 2013.

\bibitem{BorrelliCarloneTentarelliSIAM2019}
W. Borrelli, R. Carlone and L. Tentarelli,
\newblock Nonlinear Dirac equation on graphs with localized nonlinearities: bound states and nonrelativistic limit,
\newblock \emph{SIAM J. Math. Anal.} \textbf{51} (2019), 1046--1081.

\bibitem{BorrelliCarloneTentarelliNote2019}
W. Borrelli, R. Carlone and L. Tentarelli,
\newblock A note on the Dirac operator with Kirchhoff-type vertex conditions on noncompact metric graphs,
\newblock preprint, HAL hal-02034789, 2019.

\bibitem{BorrelliCarloneTentarelliJDE2021}
W. Borrelli, R. Carlone and L. Tentarelli,
\newblock On the nonlinear Dirac equation on noncompact metric graphs,
\newblock \emph{J. Differential Equations} \textbf{278} (2021), 326--357.

\bibitem{BorthwickChangJeanjeanSoaveNonlinearity2023}
J. Borthwick, X. Chang, L. Jeanjean and N. Soave,
\newblock Normalized solutions of $L^2$-supercritical NLS equations on noncompact metric graphs with localized nonlinearities,
\newblock \emph{Nonlinearity} \textbf{36} (2023), 3776--3795.

\bibitem{BuffoniEstebanSere2006}
B. Buffoni, M. J. Esteban and E. S\'er\'e,
\newblock Normalized solutions to strongly indefinite semilinear equations,
\newblock \emph{Adv. Nonlinear Stud.} \textbf{6} (2006), 323--347.

\bibitem{ChenDingGuoWangCalcVar2024}
P. Chen, Y. Ding, Q. Guo and H.-Y. Wang,
\newblock Nonrelativistic limit of normalized solutions to a class of nonlinear Dirac equations,
\newblock \emph{Calc. Var. Partial Differential Equations} \textbf{63} (2024), Paper No. 90.

\bibitem{ChenDingGuo2025}
P. Chen, Y. Ding and Q. Guo,
\newblock Existence and nonrelativistic limit of ground states to nonlinear Dirac equations,
\newblock preprint, arXiv:2507.11203, 2025.

\bibitem{ChenGuoYuJGA2026}
P. Chen, Q. Guo and Y. Yu,
\newblock Limit behavior of multiple bound states of nonlinear Dirac equations,
\newblock \emph{J. Geom. Anal.} \textbf{36} (2026), Paper No. 131.

\bibitem{DingYuZhaoJGA2023}
Y. Ding, Y. Yu and F. Zhao,
\newblock $L^2$-normalized solitary wave solutions of a nonlinear Dirac equation,
\newblock \emph{J. Geom. Anal.} \textbf{33} (2023), Paper No. 69.

\bibitem{DovettaJDE2018}
S. Dovetta,
\newblock Existence of infinitely many stationary solutions of the $L^2$-subcritical and critical NLSE on compact metric graphs,
\newblock \emph{J. Differ. Equations} \textbf{264} (2018), 4806--4821.

\bibitem{DovettaSerraTilliAdvMath2020}
S. Dovetta, E. Serra and P. Tilli,
\newblock Uniqueness and non-uniqueness of prescribed mass NLS ground states on metric graphs,
\newblock \emph{Adv. Math.} \textbf{374} (2020), Article 107352.


\bibitem{EstebanSereCMP1995}
M. J. Esteban and E. S\'er\'e,
\newblock Stationary states of the nonlinear Dirac equation: a variational approach,
\newblock \emph{Comm. Math. Phys.} \textbf{171} (1995), 323--350.

\bibitem{JeanjeanLu2019}
L. Jeanjean and S. S. Lu,
\newblock Nonradial normalized solutions for nonlinear scalar field equations,
\newblock \emph{Nonlinearity} \textbf{32} (2019), 4942--4966.

\bibitem{HeJi2025}
Z. He and C. Ji,
\newblock Normalized solutions of nonlinear Dirac equations on noncompact metric graphs with localized nonlinearities,
\newblock arXiv:2505.15100, 2025.

\bibitem{Krasnoselskii1964}
M. A. Krasnosel'skii,
\newblock \emph{Topological Methods in the Theory of Nonlinear Integral Equations},
\newblock Macmillan, New York, 1964.


\bibitem{Noja2014}
D. Noja,
\newblock Nonlinear Schr\"odinger equation on graphs: recent results and open problems,
\newblock \emph{Philos. Trans. R. Soc. Lond. Ser. A Math. Phys. Eng. Sci.} \textbf{372} (2014), 20130002.

\bibitem{Rabinowitz1986}
P. H. Rabinowitz,
\newblock \emph{Minimax Methods in Critical Point Theory with Applications to Differential Equations},
\newblock CBMS Regional Conference Series in Mathematics, Vol. 65, American Mathematical Society, Providence, RI, 1986.

\bibitem{SerraTentarelliJDE2016}
E. Serra and L. Tentarelli,
\newblock Bound states of the NLS equation on metric graphs with localized nonlinearities,
\newblock \emph{J. Differential Equations} \textbf{260} (2016), 5627--5644.

\bibitem{SerraTentarelliNA2016}
E. Serra and L. Tentarelli,
\newblock On the lack of bound states for certain NLS equations on metric graphs,
\newblock \emph{Nonlinear Anal.} \textbf{145} (2016), 68--82.

\bibitem{Struwe2008}
M. Struwe,
\newblock \emph{Variational Methods. Applications to Nonlinear Partial Differential Equations and Hamiltonian Systems},
\newblock 4th ed., Springer, Berlin, 2008.

\bibitem{TentarelliJMAA2016}
L. Tentarelli,
\newblock NLS ground states on metric graphs with localized nonlinearities,
\newblock \emph{J. Math. Anal. Appl.} \textbf{433} (2016), 291--304.

```bibtex
\bibitem{CotiZelatiNolasco2025}
V. Coti Zelati and M. Nolasco,
\newblock Normalized solutions for a nonlinear Dirac equation,
\newblock \emph{J. Differ. Equations} \textbf{414} (2025), 746--772.

\bibitem{Nolasco2021}
M. Nolasco,
\newblock A normalized solitary wave solution of the Maxwell-Dirac equations,
\newblock \emph{Ann. Inst. Henri Poincar{\'e}, Anal. Non Lin{\'e}aire} \textbf{38} (2021), 1681--1702.

\bibitem{CotiZelatiNolasco2019}
V. Coti Zelati and M. Nolasco,
\newblock Ground state for the relativistic one electron atom in a self-generated electromagnetic field,
\newblock \emph{SIAM J. Math. Anal.} \textbf{51} (2019), 2206--2230.

\end{thebibliography}
\end{document}